\newcommand{\el}{\par \mbox{} \par \vspace{-0.5\baselineskip}}
\newcommand{\goth}[1]{\EuFrak{#1}}
\newcounter{amoi}
\newtheorem{theo}{Theorem}
\newtheorem{leme}{Lemma}
\newtheorem{prop}{Proposition}
\newtheorem{coro}{Corollary}
\newcommand{\noi}{\noindent}
\newenvironment{rem}{\noi {\el \noi \bf Remark.}}{ \el }
\newenvironment{ex}{\noi {\el \noi \bf Example.}}{ \el }
\newenvironment{exs}{\noi {\el \noi \bf Examples.}}{ \el }
\newenvironment{pf}{\noi {\el \noi \bf Proof.}}{\hfill $\Box$ \el}
\begin{document}
\begin{center}
  {\Large {Bernstein-Gel'fand-Gel'fand reciprocity and indecomposable projective modules for
      classical algebraic supergroups} \vskip 2cm}

Caroline {\sc Gruson} {\footnote{U.M.R. 7502 du CNRS, Institut Elie Cartan, 
Universite Henri Poincar\'e (Nancy 1), BP 239,
54506 Vandoeuvre-les-Nancy Cedex, France. E-mail:
Caroline.Gruson@iecn.u-nancy.fr}}  and Vera {\sc Serganova} {\footnote
{Department of Mathematics, University of California, Berkeley, CA, 94720-3840 USA.  E-mail:
      serganov@Math.Berkeley.EDU}}

\end{center}

\bigskip

{\Small{\bf Abstract:} We prove a BGG type reciprocity law for the category of finite dimensional modules over algebraic supergroups satisfying certain conditions. The equivalent of a standard module in this case is a virtual module called Euler characteristic due to its geometric interpretation. In the orthosymplectic case, we also describe indecomposable projective modules in terms of those Euler characteristics.}

{\Small{\bf Key words:} Finite dimensional representations of algebraic supergroups, Flag variety, BGG reciprocity law.}

\section*{Introduction}

In many representation theories, there exist reciprocity laws. 
Roughly speaking, if the category in question has enough projective
modules, one defines in a natural way a family of so-called
standard modules such that every projective indecomposable module has
a filtration with standard quotients. The reciprocity law states that the multiplicity of a
standard module in  the projective cover of a simple module equals the
multiplicity of this simple module in the standard module. Those
standard modules are usually easy to describe, in particular, their characters
can be described by simple formulae.

For instance, Brauer discovered such a law in the case of finite groups
representations in positive characteristic, \cite{Br}. Another example
is a result of Humphreys, \cite{H} for representations of
semi-simple Lie algebras in positive characteristic. In 1976
(\cite{BGG}) Bernstein,
Gel'fand and Gel'fand introduced the category $\mathcal O$ of highest
weight modules for a semi-simple Lie algebra in characteristic $0$, and
proved a reciprocity law in this category. Irving, \cite{I}, and
Cline, Parshall and Scott, \cite{CPS}, introduced a general notion of highest
weight category and proved a generalized BGG reciprocity. Using this
general approach, it is easy to prove similar results for the
category $\mathcal O$ of highest weight modules for classical
simple Lie superalgebras. For the category of finite-dimensional
representations of classical Lie superalgebras of type I Zou proved
BGG reciprocity in \cite{Z}. For superalgebras of type II the question
remained open, in particular since it was unclear how to define a standard object.  

The first part of this paper (Section 2) is devoted to the generalized
BGG reciprocity for algebraic supergroups $G$ with reductive even part and
symmetric root decomposition. In those cases, the irreducible
representations are parametrized by a highest weight, and if $\lambda$ is a
highest weight, we denote by $L_\lambda$ the corresponding irreducible
representation. Every $L_\lambda$ has an indecomposable projective
cover $P_\lambda$ in the category of finite-dimensional
representations of $G$, \cite{Sqr}. However, in this situation there
is no direct analogue of the so-called standard modules. Hence we
introduce a family of virtual modules $\mathcal E(\mu)$, living in the
Grothendieck group of the category: we call those modules Euler
characteristics because they come from the cohomology of line bundles
on flag supervarieties. It turns out that in the Grothendieck ring the
$[P_\lambda]$-s are linear combinations of $\mathcal E(\mu)$-s and
we denote the coefficient of $\mathcal E(\mu)$ in  $[P_\lambda]$ by $a(\lambda,\mu)$.
The reciprocity law (Theorem \ref{BGG}) states that $a(\lambda,\mu)$ is
exactly the multiplicity of $L_\lambda$ in $\mathcal E(\mu)$. The key argument in the
proof is a $\mathbb Z/2\mathbb Z$-graded analogue of the Bott
reciprocity result, \cite{Bott}, see Proposition \ref{bott}.

All the constructions above depend on the choice of a Borel subgroup in
$G$: in the super case, this choice is not unique up to conjugation,
and the result is true for every possible choice. In particular, in the case of
$GL(m,n)$ our result generalizes Zou's result. It is also to be noted
that the weights $\lambda$ (labeling $L_\lambda$ and $P_\lambda$) and
$\mu$ (labeling $\mathcal E(\mu)$) do not belong to the same set. For
instance, in the orthosymplectic case (Section 4) the $\mu$-s must
have tailless weight diagrams. Finally, let us emphasize on the fact
that this category has infinite cohomological dimension and the
subgroup generated by $[P_\lambda]$-s is a proper subgroup in the
whole Grothendieck group.

The rest of the paper deals with the computation of the coefficients
$a(\lambda,\mu)$ for the orthosymplectic supergroup $SOSP(m,2n)$. The
first computation of those coefficients in the $GL(m,n)$ case was made
in \cite{VSel}. In \cite{B} J. Brundan used another method,
relating this representation theory with the one of $\goth{gl}_\infty$.
 He interpreted the translation functors for $\mathfrak{gl}(m,n)$  as linear operators of  $\mathfrak{gl}_\infty$ acting on 
$\Lambda^n(W)\otimes \Lambda^m(W^*)$, where $W$ is the standard representation of
 $\mathfrak{gl}_\infty$. 
Later on, in \cite{BS1,BS2,BS3} Brundan and Stroppel introduced weight
diagrams, 
which give a clear picture of the translation functors action. Thus
the category of finite dimensional $GL(m,n)$-modules is 
very well understood now, including the projective modules. 

We adopt Brundan's categorification approach. Here we have to separate
in two cases depending on the parity of $m$. If $m$ is odd, the
translation functors can be seen as the Chevalley generators of the Lie
algebra $\goth{gl}_{\infty/2}$ with Dynkin diagram 
$$\circ -\circ - \circ - \ldots \; ,$$
and if $m$ is even, the Lie algebra is  $\goth{gl}_{\infty/2}\oplus \goth{gl}_{\infty/2}$
(see Section 5 and 7).
We compute the coefficients $a(\lambda,\mu)$ in Section 8 (see Theorem \ref{lem1}, Theorem
\ref{prop1} and Theorem \ref{prop1ev}) via
a comparison between the action of translation functors on
$P_\lambda$-s and $\mathcal E(\mu)$-s.  We start with a typical
$\lambda$ (in this case $P_\lambda$, $\mathcal E(\lambda)$ and
$L_\lambda$ coincide) and then obtain an arbitrary $P_\lambda$ by
application of translation functors.

Thus, using the results of this
paper one can express the character of any projective indecomposable
module in terms of $\mathcal E(\mu)$. That however does not imply
automatically an expression of irreducible characters in the same terms. In the case of
$GL(m,n)$ this difficulty can be resolved by allowing infinite linear
combinations of Euler characteristcs $\mathcal E(\mu)$-s, i.e. by completing the
Grothendieck ring. In the case of $SOSP(m,2n)$ the
problem of calculating irreducible characters was solved in \cite{VeraCaroI}  
by calculating Euler characters of vector bundles over an adequate
variety (generalized grassmannian) related to the highest weight and using an induction
on the rank of the supergroup. It seems that this difference between
general linear and orthosymplectic cases is related to the fact that in
the latter case the set of dominant weights has a minimal element with
respect to the standard order.

There remain several open questions such as an interpretation of
indecomposable projectives in terms of canonical bases and the
construction of the analogue of Khovanov's diagram algebra, see
\cite{BS1},\cite{BS2},\cite{BS3} and \cite{BS4}.
It would be also quite interesting to understand how formulae for characters of
the projective modules obtained in this paper are related to the results of
\cite{CLW}.

We thank Jonathan Brundan, Catharina Stroppel and Elizaveta Vishlyakova for fruitful discussions.
This work was partially supported by NSF grant n. 0901554.

\section{Notations and context}

Let $G$ be a connected algebraic supergroup with reductive even part
$G_0$ and $\mathfrak g$ denote its Lie superalgebra. Then $\mathfrak g_0$ is a 
a reductive Lie algebra and $\mathfrak g$ is a semisimple $\mathfrak g_0$-module.
We denote by $\mathfrak h_0$ a Cartan subalgebra of   $\mathfrak g_0$
and by  $\mathfrak h$ a Cartan subalgebra of  $\mathfrak g$, by $H_0$
and $H$ we denote the corresponding algebraic subgroups. By $W$
we denote the Weyl group  $W(\mathfrak g_0, \mathfrak h_0)$.

In order to prove the BGG reciprocity we need the following assumptions
on  $\mathfrak g$ 
\begin{itemize}
\item $\mathfrak h=\mathfrak h_0$, and therefore $H=H_0$;
\item $\mathfrak g_1\simeq \mathfrak g_1^*$ as  a $\mathfrak g_0$-module.
\end{itemize}

Recall that $H=H_0$ is an algebraic torus. Let $\Lambda$ denote the free
abelian group of characters of $H$. One has a root decomposition
$$\mathfrak g=\mathfrak h\oplus\bigoplus_{\alpha\in \Delta}\mathfrak g_\alpha,$$
where
$$\mathfrak g_\alpha=\{x\in\mathfrak g | [h,x]=\alpha(h)x, \forall h \in \mathfrak h\}.$$  
The finite subset $\Delta\subset \Lambda$ is called the set of roots
of $\mathfrak g$. Our assumptions imply that
$\operatorname{dim}\mathfrak g_\alpha=(1,0)$ or $(0,m_\alpha)$ for any root $\alpha\in\Delta$.
In the former case we say that $\alpha$ is even and in the latter that
$\alpha$ is odd. So we have a decomposition
$\Delta=\Delta_0\cup\Delta_1$ defined by the parity of roots. Furthermore,
our assumptions imply that $\Delta=-\Delta$.
It is not difficult to show that one can define a parity function
$p:\Lambda\to\mathbb Z_2$ satisfying the condition
$p(\lambda+\alpha)=p(\lambda)+p(\alpha)$ for all $\lambda\in\Lambda$
and $\alpha\in\Delta$. In general the choice of $p$ is not unique.

As in the case of reductive Lie algebras, we define a decomposition
$\Delta=\Delta^+\cup\Delta^-$ of roots and the corresponding
triangular decomposition 
$$\mathfrak g=\mathfrak n^-\oplus\mathfrak h\oplus\mathfrak n$$
where
$$\mathfrak n^-=\bigoplus_{\alpha\in \Delta^-}\mathfrak g_\alpha,
\mathfrak n=\bigoplus_{\alpha\in \Delta^+}\mathfrak g_\alpha.$$
The subalgebra $\mathfrak b=\mathfrak h\oplus\mathfrak n$ is called a
Borel subalgebra of $\mathfrak g$ and the corresponding algebraic
subgroup $B$ is called a Borel subgroup of $G$. Recall that the
Borel subgroups in $G$ are not always mutually conjugate. It is easy to see that $B$ is the
semi-direct product of the algebraic torus $H$ and the unipotent
supergroup $N$. We set
$$\rho=\frac{1}{2}(\sum_{\alpha\in\Delta_0^+}\alpha-\sum_{\alpha\in\Delta_1^+}m_\alpha
\alpha).$$

By $\mathcal C$ we denote the category of finite-dimensional
$G$-modules, which is isomorphic to the category of 
$\goth g$-modules, semisimple over $\goth h$, with weights in
$\Lambda$, see \cite{Sqr}. It was shown in \cite{Krep} that any simple module in   $\mathcal C$
is a quotient of a Verma module with highest weight $\lambda\in\Lambda$ by a
maximal submodule. A weight $\lambda$ is called {\it dominant} if this
quotient is finite-dimensional. Thus, every dominant weight defines
two simple modules, one is obtained from another by application of the
functor $\Pi$ of change of parity. In order to avoid parity chasing, we
introduce a parity function $p:\Lambda\to\mathbb Z_2$ and  define the category $\mathcal F$ as the full subcategory
of $\mathcal C$ consisting of modules 
such that the parity of any weight  
space coincides with the parity of the corresponding weight. It is not
hard to see that $\mathcal C=\mathcal F\oplus\Pi(\mathcal F)$.

Define the standard order
on $\Lambda$ by setting: $\lambda\leq\mu$ iff
$\mu-\lambda=\sum_{\alpha\in\Delta^+}n_{\alpha}\alpha$ where all
$n_{\alpha}$ are non-negative integers.

For any dominant weight $\lambda$, we will denote by $L_{\lambda}$ the 
simple $\goth g$-module in the category $\mathcal F$ with highest
weight $\lambda$.

It is well-known (see, for example, \cite{Sqr}) that in the category $\mathcal F$ every simple module
$L_\lambda$ has an indecomposable projective cover which we denote by $P_\lambda$.

Denote by $\Lambda^+$ the set of all weights $\lambda$ such that
$\langle\lambda,\check{\beta}\rangle$ is a positive integer for any simple
root $\beta$ of $\Delta^+_0$, where $\check{\beta}\in [\mathfrak  g_\beta,\mathfrak g_{-\beta}]$ such 
that $\langle\beta,\check{\beta}\rangle=2$. 

Let $\mathcal R=\mathbb Z[e^\mu]$ for all $\mu\in\Lambda$.
Let $M\in\mathcal F$ and $m_\mu$ denote the multiplicity of the weight
$\mu$ in $M$ we define the character
$$Ch(M)=\sum_{\mu\in\Lambda}m_\mu e^\mu\in\mathcal R.$$ 
Denote by $\mathcal K(\mathcal F)$ the Grothendieck ring of the
category $\mathcal F$. We will denote by $[M]$ the class of a module $M$ in $\mathcal K(\mathcal F)$ and by $[M:L]$ the multiplicity of an
irreducible module $L$ in the module $M$. 
Clearly, $Ch:\mathcal K(\mathcal F)\to \mathcal R$ is a homomorphism of
rings. Note that
$$Ch(L_\lambda)=e^\lambda+\sum_{\mu <\lambda}m_\mu e^\mu,$$
thus the $Ch(L_\lambda)$ are linearly independent.
Due to our
convention about parity, for any two modules $M$ and $N$ in $\mathcal F$, 
$Ch(M)=Ch(N)$ if and only if $[M]=[N]$ in $\mathcal K(\mathcal F)$.
Hence $Ch$ is injective. For classical Lie superalgebras the image of $Ch$ is described in \cite{SV}.

Let $\goth g$ be a finite-dimensional Kac--Moody superalgebra such
that the quotient of $[\goth g,\goth g]$ by the center is simple. In
other words, $\goth g=\goth{sl}(m,n)$, where $1\leq m<n$,
$\goth{gl}(n,n)$ with $n\geq 2$, $\goth{osp}(m,2n)$, $D(2,1;\alpha)$,
$G_3$ or $F_4$ (see \cite{Kadv}). Let $G$ denote a connected algebraic
supergroup with Lie algebra $\goth g$. For existence of such supergroup
see, for instance, \cite{Sqr}. Then $G$ satisfies our assumptions.

Recall that $\goth g$  is equipped
with a non degenerate invariant bilinear form and the restriction of
this bilinear form to $\goth h$ is also non degenerate. 
Thus, we have a non-degenerate form on $\Lambda$.

Following \cite{Krep} we call a weight $\lambda$ {\it typical} if
$(\lambda+\rho,\alpha)\neq 0$ for any isotropic root $\alpha$. (Recall
that an isotropic root is automatically odd). It follows from
\cite{Krep} that $P_\lambda=L_\lambda$ if and only if $\lambda$ is typical. 

 As follows from \cite{Krep} a typical
$\lambda$ is dominant iff $\lambda+\rho\in\Lambda^+$. For a  general $\lambda$,  the latter 
statement is true only for $\goth{gl}(m,n)$ or $\goth{osp}(2,2n)$ and
a special choice of a Borel subgroup.

Let $\mathcal U (\goth g)$ be the universal enveloping algebra of
$\goth g$ and $\mathcal Z(\goth g)$ be its center. 
For every weight $\lambda$, we write $\chi _{\lambda}$ for the corresponding central character.
A central character $\chi$ is dominant if there exists a dominant
$\lambda$ such that $\chi=\chi _{\lambda}$.

The category $\mathcal F$ splits into direct sum of blocks $\oplus\mathcal F_{\chi}$
consisting of modules admitting the generalized central character $\chi$.
For any $M\in\mathcal F$ we denote by $M_\chi$ the projection of $M$
to the block  $\mathcal F_{\chi}$.

\section{Geometric induction and BGG reciprocity}
In this section we assume that $G$ satisfies the assumptions of Section 1.

Let $B$ be a Borel subgroup of $G$ with Lie algebra $\goth b$, and
let $V$ be a $B$-module. Denote by $\mathcal V$ the induced vector bundle
$G\times_B V$ on the flag supervariety $G/B$. In \cite{VeraCaroI} we
defined
$$\Gamma_i(G/B,V):=H^i(G/B,\mathcal V^*)^*.$$
Recall that $H^i(G/B,\mathcal V^*)$ is a $G$-module, see
\cite{M}, \cite{MPV}, and it is not difficult to see that $\Gamma_i(G/B,V):=H^i(G/B,\mathcal V^*)^*$
is an object of $\mathcal F$ if $V$ satisfies the parity condition
about weights.
 
It is also possible to define $H^i(G/B,\mathcal V^*)$ following
\cite{J} or using the Zuckerman functor approach, see, for instance, \cite{Ssup}. 
Using one of those approaches one can avoid the rather
technical question of existence of $G/B$.

Denote by $C_\lambda$ the one-dimensional $B$-module with weight
$\lambda\in\Lambda$ and by
$\mathcal E(\lambda)$ the class of the {\it Euler characteristic} of the
sheaf $\mathcal C_{\lambda}^*$  belonging to the category $\mathcal F$ , namely
$$\mathcal E(\lambda):= \sum _{\mu} \sum _{i=0} ^{\dim(G/B)_0}(-1)^i[\Gamma_i(G/B, C _{\lambda}): L_{\mu}][L_{\mu}].$$
 
One can easily generalize Proposition 1 in \cite{VeraCaroI} or Theorem
12 in \cite{Ssup} and obtain  the
following character formula for $\mathcal E(\lambda)$
\begin{equation}\label{char}
Ch\; \mathcal E(\lambda)=D \sum_{w\in W}\varepsilon(w) e^{w(\lambda+\rho)},
\end{equation}
where  
$$D_0 = \prod _{\alpha \in \Delta _0 ^+}(e^{\alpha / 2}- e^{-\alpha / 2}),
D_1 = \prod _{\alpha \in \Delta _1 ^+}(e^{\alpha / 2}+ e^{-\alpha / 2})^{m_\alpha},
D=\frac{D_1}{D_0}.$$

Note that (\ref{char}) implies the following 
\begin{leme}\label{symmetry} 
(a) For all $w \in W$, one has $$\mathcal E(\lambda)=\varepsilon(w)\mathcal  E(w(\lambda+\rho)-\rho).$$
In particular, if $\langle\lambda + \rho,\check\beta\rangle=0$ for some even root $\beta$, then $\mathcal E(\lambda)=0$.

(b) The set $$\{Ch\; \mathcal E(\lambda),  \lambda+\rho\in\Lambda^+\}$$ is
linearly independent in $\mathcal R$. 
\end{leme}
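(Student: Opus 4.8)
The plan is to derive both parts directly from the character formula (\ref{char}), treating $Ch$ as an injective map into $\mathcal R$ so that statements about $\mathcal E(\lambda)$ in $\mathcal K(\mathcal F)$ reduce to statements about their characters.

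For part (a), I would start from
$$Ch\;\mathcal E(\lambda)=D\sum_{w\in W}\varepsilon(w)e^{w(\lambda+\rho)}.$$
Fix $v\in W$ and set $\mu=v(\lambda+\rho)-\rho$, so $\mu+\rho=v(\lambda+\rho)$. Then $Ch\;\mathcal E(\mu)=D\sum_{w\in W}\varepsilon(w)e^{wv(\lambda+\rho)}$, and reindexing the sum by $w'=wv$ (a bijection of $W$ with $\varepsilon(w)=\varepsilon(w')\varepsilon(v)$) gives $Ch\;\mathcal E(\mu)=\varepsilon(v)^{-1}D\sum_{w'\in W}\varepsilon(w')e^{w'(\lambda+\rho)}=\varepsilon(v)Ch\;\mathcal E(\lambda)$, using $\varepsilon(v)=\pm1$. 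By injectivity of $Ch$ this yields $\mathcal E(\lambda)=\varepsilon(v)\mathcal E(v(\lambda+\rho)-\rho)$, which is the claim after renaming $v$ as $w$. For the ``in particular'' statement: if $\langle\lambda+\rho,\check\beta\rangle=0$ for an even root $\beta$, let $s_\beta\in W$ be the corresponding reflection. Then $s_\beta(\lambda+\rho)=\lambda+\rho$, so applying part (a) with $w=s_\beta$ gives $\mathcal E(\lambda)=\varepsilon(s_\beta)\mathcal E(s_\beta(\lambda+\rho)-\rho)=-\mathcal E(\lambda)$, hence $2\mathcal E(\lambda)=0$ in $\mathcal K(\mathcal F)$, which is torsion-free, so $\mathcal E(\lambda)=0$.

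For part (b), I would again work at the level of characters. Suppose $\sum_{i}c_i\,Ch\;\mathcal E(\lambda_i)=0$ with the $\lambda_i+\rho\in\Lambda^+$ distinct and $c_i\in\mathbb Z$ (or $\mathbb Q$). Multiplying through by $D_0$ and using (\ref{char}) in the form $D_0\cdot Ch\;\mathcal E(\lambda_i)=D_1\sum_{w\in W}\varepsilon(w)e^{w(\lambda_i+\rho)}$, we get $\sum_i c_i D_1\sum_{w\in W}\varepsilon(w)e^{w(\lambda_i+\rho)}=0$, and since $D_1$ is a nonzero element of the (integral domain extension of the) group algebra, we may cancel it, leaving $\sum_i c_i\sum_{w\in W}\varepsilon(w)e^{w(\lambda_i+\rho)}=0$. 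Now the point is that the $W$-antisymmetrizations $\{\sum_{w\in W}\varepsilon(w)e^{w(\nu)}\}$ over $\nu\in\Lambda^+$ are linearly independent in $\mathcal R$: each such element, when expanded, has $e^{\nu}$ as its unique term with $W$-dominant exponent lying in $\nu+(\text{nonpositive cone})$-shifted orbit, i.e. distinct $\nu,\nu'\in\Lambda^+$ give disjoint supports for the dominant representatives since $\Lambda^+$ is a fundamental domain for the $W$-action (the stabilizer of a strictly dominant weight is trivial, so no cancellation occurs within one orbit). Comparing the coefficient of $e^{\lambda_i+\rho}$ forces $c_i=0$ for all $i$.

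The main obstacle is the final linear-independence input in part (b): one must be careful that $\Lambda^+$ as defined (using strict positivity $\langle\lambda,\check\beta\rangle\in\mathbb Z_{>0}$) genuinely consists of regular dominant weights with trivial $W$-stabilizer, so that each antisymmetrizer $\sum_w\varepsilon(w)e^{w\nu}$ is nonzero and the $W$-orbits of distinct elements of $\Lambda^+$ are disjoint; this is the standard argument for linear independence of Weyl numerators, but it relies on that regularity. A minor additional care is that $\mathcal R=\mathbb Z[e^\mu]$ and $D$ involves division by $D_0$, so one should phrase the cancellation of $D_1$ inside a suitable localization (the group algebra of $\Lambda$ over $\mathbb Q$ with formal half-weights, an integral domain), which is legitimate since all the expressions in (\ref{char}) already live there.
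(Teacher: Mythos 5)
Your proposal is correct and follows essentially the same route as the paper: part (a) is read off from the character formula (\ref{char}) together with injectivity of $Ch$, and part (b) reduces, after clearing the factor $D$, to the linear independence of the antisymmetrizers $\sum_{w\in W}\varepsilon(w)e^{w(\lambda+\rho)}$, which holds because each regular $W$-orbit meets $\Lambda^+$ in exactly one point. Your write-up merely makes explicit the reindexing in (a) and the localization needed to cancel $D_1$ in (b), both of which the paper leaves implicit.
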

\begin{pf} (a) follows immediately from (\ref{char}). To prove (b) we
observe that any $W$-orbit in $\Lambda$ with trivial stabilizer meets
$\Lambda^+$ in exactly one point. Hence  the set $\{\sum_{w\in  W}\varepsilon(w) e^{w(\lambda+\rho)},  \lambda+\rho\in \Lambda^+\}$
is linearly independent in $\mathcal R$. Therefore
(\ref{char}) implies (b).
\end{pf} 

We continue with the following analogue of Bott's reciprocity result.

\begin{prop}\label{bott} Let $\goth n$ denote the maximal nilpotent
ideal of $\goth b$ and $M^{\goth h}$ denote the set of 
$\goth  h$-invariants in an $\goth h$-module $M$. Then, for any
$B$-module $V$ and any dominant weight $\lambda$, we have
$$\left[H^i(G/B,\mathcal V):L_\lambda\right]=\operatorname{dim} H^i(\goth n,
P^*_\lambda\otimes V)^\goth h.$$
\end{prop}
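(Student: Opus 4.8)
The plan is to deduce this from a Frobenius-type reciprocity identifying geometric induction with a derived functor on the category of $\goth b$-modules, together with the fact that $P_\lambda$ is projective. First I would recall that, since $\Gamma_i(G/B,-)$ is defined as $H^i(G/B,\mathcal V^*)^*$, there is a Frobenius reciprocity relating $\operatorname{Hom}_G$ out of these cohomology modules to $\operatorname{Hom}_B$: for a $G$-module $N$ one has a spectral sequence (or, after passing to suitable duals, an isomorphism in the appropriate degrees) computing $\operatorname{Ext}^i_G(N,H^0(G/B,\mathcal V))$ in terms of $\operatorname{Ext}^i_{\goth b}$ or $H^i(\goth n,-)$. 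Concretely, the composite functor $G$-mod $\to$ $\goth b$-mod (restriction) followed by $\goth b$-coinduction back to $G$-mod is the geometric induction, and its derived functors are the $H^i(G/B,\mathcal V)$; taking $\goth h$-invariants of $\goth n$-cohomology is exactly the derived functor of $V\mapsto (V^{\goth n})^{\goth h}$ on $\goth b$-modules with the trivial $\goth b$-action twisted by $-\lambda$.

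Next I would set up the computation of the multiplicity $[H^i(G/B,\mathcal V):L_\lambda]$. Because we are in a block with a highest weight structure and $P_\lambda$ is the projective cover of $L_\lambda$, the multiplicity of $L_\lambda$ as a composition factor of a module $M$ equals $\dim \operatorname{Hom}_G(P_\lambda, M)$ (this uses that $P_\lambda$ is projective so $\operatorname{Hom}_G(P_\lambda,-)$ is exact and picks out exactly the $L_\lambda$-composition factors). Applying this with $M = H^i(G/B,\mathcal V)$ gives
$$\left[H^i(G/B,\mathcal V):L_\lambda\right] = \dim\operatorname{Hom}_G\!\left(P_\lambda, H^i(G/B,\mathcal V)\right).$$
Since $P_\lambda$ is projective, $\operatorname{Hom}_G(P_\lambda,-)$ commutes with the cohomology $H^i(G/B,-)$ of the induction functor, so the right-hand side equals $H^i$ of the complex computing $\operatorname{Hom}_G(P_\lambda,\Gamma_\bullet(G/B,\mathcal V))$. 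By Frobenius reciprocity for geometric induction (again using projectivity/flatness of $P_\lambda$ to pass cohomology across), this complex computes $H^i(G/B, P_\lambda^*\otimes\mathcal V)^{G}$-type invariants, which by the Leray/Koszul description of sections of $G/B$ reduces to relative Lie algebra cohomology: $H^i(\goth n, P_\lambda^*\otimes V)^{\goth h}$.

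The main obstacle, and the step I would spend the most care on, is justifying that one may commute $\operatorname{Hom}_G(P_\lambda,-)$ with the higher cohomology $H^i(G/B,-)$ and then convert sheaf cohomology on the supervariety into $\goth n$-cohomology with $\goth h$-invariants. In the non-super setting this is Bott's theorem / Kempf vanishing plus the Koszul resolution; here one must handle the odd directions, for which the relevant tool is a Koszul-type resolution of $\mathcal O_{G/B}$ by $G$-equivariant sheaves induced from $\Lambda^\bullet(\goth g/\goth b)^*$, whose global sections realize the Chevalley--Eilenberg complex for $\goth n = \goth g/\goth b$ (more precisely $\goth n$ acting on $P_\lambda^*\otimes V$). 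Projectivity of $P_\lambda$ guarantees $P_\lambda^*$ is also nice enough (injective/projective after dualizing) that tensoring is exact and no higher Tor terms contaminate the spectral sequence, so it collapses and yields the stated equality degree by degree. I would also need to check the parity bookkeeping so that the object $H^i(\goth n, P_\lambda^*\otimes V)^{\goth h}$ is being read in $\mathcal F$ with the correct parity, but this is routine given the parity function $p$ fixed in Section 1.
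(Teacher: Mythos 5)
Your argument is essentially the paper's proof: both use $[M:L_\lambda]=\dim\operatorname{Hom}_G(P_\lambda,M)$, the exactness of $\operatorname{Hom}_G(P_\lambda,-)$ to commute it past the derived functors $H^i(G/B,-)$ (computed from an injective resolution of $V$ as a $B$-module), Frobenius reciprocity to identify the result with $\operatorname{Ext}^i_B(P_\lambda,V)$, and the decomposition $B=H\ltimes N$ to rewrite this as $H^i(\goth n,P_\lambda^*\otimes V)^{\goth h}$. The spectral-sequence and Tor concerns in your last paragraph are unnecessary (everything is finite dimensional over a field and $H$ is a torus, so tensoring and taking $\goth h$-invariants are exact), but they do not affect correctness.
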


\begin{pf} For every $M\in\mathcal F$ we have
$$[M:L_\lambda]=\operatorname{dim} Hom_\goth g(P_\lambda,M).$$

Consider an injective resolution 
$0\to R_0\to R_1\to \cdots$
of $V$ in the category of $B$-modules. By definition 
$H^i(G/B,\mathcal V)$ is given by the $i$-th cohomology of the
complex
$$0\to H^0(G/B,\mathcal{R}_0)\to  H^0(G/B,\mathcal{R}_1)\to \cdots.$$
Since $ Hom_G(P_\lambda,\cdot)$ is an exact functor, 
$ Hom_G(P_\lambda,H^i(G/B,\mathcal V))$ is given by the
$i$-th cohomology of the complex
$$0\to  Hom_G(P_\lambda,H^0(G/B,\mathcal{R}_0))\to   Hom_G(P_\lambda,H^0(G/B,\mathcal{R}_1))\to \cdots.$$
Using Frobenius reciprocity we have 
$$Hom_G(P_\lambda,H^0(G/B,\mathcal{R}_j))=Hom_B(P_\lambda,R_j).$$
Thus, we obtain that 
$$Hom_G(P_\lambda,H^i(G/B,\mathcal V))=Ext^i_B(P_\lambda,V).$$

Now we recall that $B$ is the semidirect product of the torus $H$ and
the unipotent supergroup $N$. Therefore it is easy to see that
$$Ext^i_B(P_\lambda,V)=Ext^i_\goth n((P_\lambda,V)^\goth h=H^i(\goth n,P^*_\lambda\otimes V)^\goth h.$$
\end{pf}

By dualizing we obtain the following corollary.

\begin{coro}\label{kostant} Let $\goth n$ denote the maximal nilpotent
ideal of $\goth b$ and $M^{\goth h}$ denote the set of 
$\goth  h$-invariants in an $\goth h$-module $M$. Then, for any weight
$\nu$ and any dominant weight $\lambda$, we have
$$\left[\Gamma_i(G/B,C_\nu):L_\lambda\right]=\operatorname{dim} H^i(\goth n,
P_\lambda\otimes C_{-\nu})^\goth h.$$
\end{coro}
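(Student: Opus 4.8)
The plan is to deduce this from Proposition \ref{bott} by a duality argument, so the main task is to relate $H^i(G/B,\mathcal V)$ for $\mathcal V$ the bundle induced from $C_\nu$ with the Euler-characteristic ingredient $\Gamma_i(G/B,C_{-\nu})$, and to track what duality does to $P_\lambda$. Recall that by definition $\Gamma_i(G/B,V)=H^i(G/B,\mathcal V^*)^*$, so $\Gamma_i(G/B,C_\nu)$ is the dual of $H^i(G/B,\mathcal V^*)$ where $\mathcal V$ is induced from $C_\nu$; equivalently $\Gamma_i(G/B,C_{-\nu})$ is the dual of $H^i(G/B,\mathcal W)$ where $\mathcal W=G\times_B C_\nu$ (since $C_{-\nu}\simeq C_\nu^*$ as a $B$-module and dualizing the bundle corresponds to negating the weight). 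Hence for any dominant $\lambda$,
$$[\Gamma_i(G/B,C_{-\nu}):L_\lambda]=[H^i(G/B,\mathcal W)^*:L_\lambda]=[H^i(G/B,\mathcal W):L_\lambda^*].$$
Here $L_\lambda^*$ is again an irreducible module in $\mathcal C$, whose projective cover is $P_\lambda^*$ — indeed duality is a contravariant self-equivalence of $\mathcal F$ (up to parity shift) sending simples to simples and projectives to projectives, and it sends $P_\lambda$, the cover of $L_\lambda$, to $P_{\lambda}^*$, the cover of $L_\lambda^*$.

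Next I would apply Proposition \ref{bott} to the $B$-module $V=C_\nu$ and the dominant weight $\lambda'$ with $L_{\lambda'}=L_\lambda^*$, obtaining
$$[H^i(G/B,\mathcal W):L_\lambda^*]=\dim H^i(\goth n,P_{\lambda'}^*\otimes C_\nu)^{\goth h}.$$
Now $P_{\lambda'}^*$ is the projective cover of $L_\lambda^{**}\simeq L_\lambda$, so $P_{\lambda'}^*\simeq P_\lambda$ (again up to a harmless parity shift that does not affect $\goth h$-invariants of cohomology, by our parity convention). Therefore the right-hand side becomes $\dim H^i(\goth n,P_\lambda\otimes C_\nu)^{\goth h}$. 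Finally I rewrite $P_\lambda\otimes C_\nu$ as $P_\lambda\otimes C_{-\nu}$ by another application of duality inside the Lie-algebra cohomology: $H^i(\goth n,M)^{\goth h}$ and $H^i(\goth n,M^*)^{\goth h}$ have the same dimension because $\goth n$-cohomology is dual to $\goth n$-homology and the $\goth h$-structure is compatible with the triangular decomposition $\goth g=\goth n^-\oplus\goth h\oplus\goth n$ (this is the standard "$H^i(\goth n,M)\cong H_i(\goth n^-,M)$ up to twist by the top exterior power" symmetry). Taking $M=P_\lambda\otimes C_\nu$ and using $(P_\lambda)^*\simeq P_{\lambda''}$ for the appropriate dominant $\lambda''$, together with $C_\nu^*=C_{-\nu}$, converts the expression into $\dim H^i(\goth n,P_\lambda\otimes C_{-\nu})^{\goth h}$, which is the claimed formula.

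The main obstacle I anticipate is bookkeeping rather than conceptual: one must be careful that duality is used consistently on three different objects — the sheaf/bundle (negating the inducing weight), the cohomology $G$-module (interchanging $P_\lambda$ with its dual), and the $\goth n$-cohomology (interchanging a module with its dual up to a $\rho$-type twist) — and that each of these is a contravariant equivalence preserving the relevant multiplicity and dimension data. In particular I would want to pin down precisely why $P_\lambda^*$ is again an indecomposable projective with simple head $L_{\lambda}^*$: this follows because $(-)^*$ is exact and takes projectives to injectives, and in $\mathcal F$ projectives and injectives coincide (the category has a duality and enough projectives, with $P_\lambda$ the cover of $L_\lambda$), so $P_\lambda^*$ is indecomposable projective-injective and its socle $L_\lambda^*$ forces $P_\lambda^*\simeq P_{\lambda^*}$ where $\lambda^*$ is the dominant weight labeling $L_\lambda^*$. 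Once this identification is in place, the chain of equalities above is immediate, and no genuine computation is required beyond invoking Proposition \ref{bott} and the elementary self-duality of $\goth n$-(co)homology. $\Box$
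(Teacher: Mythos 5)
Your overall strategy is the paper's own: the paper proves this corollary from Proposition \ref{bott} with the single phrase ``by dualizing,'' and the first part of your argument (pass from $\Gamma_i(G/B,C_{-\nu})=H^i(G/B,\mathcal W)^*$ to $[H^i(G/B,\mathcal W):L_\lambda^*]$, apply Proposition \ref{bott} with $V=C_\nu$ and the dominant weight $\lambda'$ labelling $L_\lambda^*$, and identify $P_{\lambda'}^*$ with $P_\lambda$) is a correct implementation of that. At that point you have already proved
$[\Gamma_i(G/B,C_{-\nu}):L_\lambda]=\dim H^i(\goth n,P_\lambda\otimes C_\nu)^{\goth h}$
for \emph{every} weight $\nu$, and the corollary follows by simply renaming $\nu\mapsto-\nu$. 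You should stop there.

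The final step you add instead is genuinely wrong. There is no degree-preserving self-duality of $\goth n$-cohomology: the relation between $H^\bullet(\goth n,M)$ and $H^\bullet(\goth n,M^*)$ goes through $\goth n$-homology and Poincar\'e duality, which reverses the cohomological degree and twists by the top exterior power of $\goth n$ (of $\goth h$-weight roughly $2\rho$) --- and for a super $\goth n$ with nonzero odd part the cochain complex is not even finite-dimensional. Concretely, already for $\goth{sl}_2$ with $\goth n=\mathbb C e$ one has $\dim H^1(\goth n,C_\alpha)^{\goth h}=1$ while $\dim H^1(\goth n,C_{-\alpha})^{\goth h}=0$, so the asserted equality $\dim H^i(\goth n,M)^{\goth h}=\dim H^i(\goth n,M^*)^{\goth h}$ fails. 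Moreover, even granting it, dualizing $M=P_\lambda\otimes C_\nu$ would produce $P_{\lambda'}\otimes C_{-\nu}$ with $\lambda'\neq\lambda$ in general, not $P_\lambda\otimes C_{-\nu}$. Since the step is both false and unnecessary, delete it. One further point deserves honesty rather than a parenthesis: the identification $P_{\lambda'}^*\simeq P_\lambda$ is exactly the statement that in $\mathcal F$ the injective hull of $L_\lambda$ coincides with its projective cover (equivalently, $\operatorname{soc}P_\lambda\simeq L_\lambda$). Your justification (``its socle $L_\lambda^*$ forces $P_{\lambda}^*\simeq P_{\lambda^*}$'') assumes precisely this; it is true for the supergroups considered here, essentially because induction from $\goth g_0$ coincides with coinduction under the hypothesis $\goth g_1\simeq\goth g_1^*$, but it should be quoted (e.g.\ from \cite{Sqr}) rather than presented as formal.
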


Denote by $b(\nu,\lambda)$ the coefficients in the decomposition
$$\mathcal E(\nu)=\sum_\lambda b(\nu,\lambda) [L_\lambda].$$
Lemma \ref{symmetry} implies
\begin{equation}\label{coeff} 
b(\nu,\lambda)=\varepsilon(w)b(w(\nu+\rho)-\rho,\lambda).
\end{equation}

\begin{coro}\label{kostcor} The coefficient $b(\nu,\lambda)$ is equal to
  the constant term in the formal expression
$D^{-1}e^{-\nu-\rho}Ch(P_\lambda)$.
\end{coro}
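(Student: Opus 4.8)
The plan is to combine the character formula (\ref{char}) for $\mathcal E(\nu)$ with the Kostant-type formula of Corollary \ref{kostant}, using the standard fact that cohomology of $\goth n$ with coefficients in a finite-dimensional module can be computed via an Euler characteristic argument (a $\mathbb Z/2\mathbb Z$-graded Koszul-type resolution). Concretely, I would start from Corollary \ref{kostant}, which gives $b(\nu,\lambda)=[\mathcal E(\nu):L_\lambda]=\sum_i(-1)^i\dim H^i(\goth n,P_\lambda\otimes C_{-\nu})^{\goth h}$, where the alternating sum over $i$ comes from the definition of $\mathcal E$. Since taking $\goth h$-invariants is exact on the semisimple category of $\goth h$-modules, this equals the $\goth h$-weight-zero part of $\sum_i(-1)^i[H^i(\goth n,P_\lambda\otimes C_{-\nu})]$ in the Grothendieck group of finite-dimensional $\goth h$-modules.

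Next I would compute that Euler characteristic. The Lie superalgebra cohomology $H^\bullet(\goth n,M)$ is the cohomology of the Chevalley–Eilenberg complex $\Lambda^\bullet(\goth n^*)\otimes M$ (with the appropriate super sign conventions), so in the Grothendieck group one has the identity
$$\sum_i(-1)^i[H^i(\goth n,M)]=\left[\sum_i(-1)^i\Lambda^i(\goth n^*)\right]\cdot[M].$$
Computing the Euler characteristic of the exterior algebra on $\goth n^*$ over $\goth h$ gives, on characters,
$$\sum_i(-1)^i Ch\,\Lambda^i(\goth n^*)=\prod_{\alpha\in\Delta_0^+}(1-e^{-\alpha})\cdot\prod_{\alpha\in\Delta_1^+}(1+e^{-\alpha})^{-m_\alpha},$$
where the odd part contributes a symmetric-algebra factor (hence a denominator) because of the parity shift in the super exterior power. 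Pulling out $e^{-\rho}$ from each factor, this product is exactly $e^{-\rho}D^{-1}$ up to a sign that matches the Weyl-denominator normalization: one checks $\prod_{\alpha\in\Delta_0^+}(1-e^{-\alpha})=e^{-\rho_0}\prod_{\alpha\in\Delta_0^+}(e^{\alpha/2}-e^{-\alpha/2})=e^{-\rho_0}D_0$ and similarly $\prod_{\alpha\in\Delta_1^+}(1+e^{-\alpha})^{-m_\alpha}=e^{\rho_1}D_1^{-1}$, and $\rho=\rho_0-\rho_1$, so the product equals $e^{-\rho}D_0/D_1=e^{-\rho}D^{-1}$. Therefore $Ch\left(\sum_i(-1)^i H^i(\goth n,P_\lambda\otimes C_{-\nu})\right)=e^{-\rho}D^{-1}\cdot Ch(P_\lambda)\cdot e^{-\nu}$, and $b(\nu,\lambda)$ is the coefficient of $e^0$ in this expression, i.e. the constant term of $D^{-1}e^{-\nu-\rho}Ch(P_\lambda)$.

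The main obstacle is bookkeeping the super signs correctly: one must verify that odd positive roots contribute inverse factors $(1+e^{-\alpha})^{-m_\alpha}$ rather than $(1+e^{-\alpha})^{m_\alpha}$ in the Euler characteristic of $\Lambda^\bullet(\goth n^*)$, which is the whole reason a denominator $D$ (rather than a polynomial) appears, and to reconcile this with the $\rho$-shift so that the normalization is consistent with (\ref{char}). A secondary point is ensuring that the alternating sum $\sum_i(-1)^i$ in the definition of $\mathcal E(\nu)$ lines up with the cohomological degree in Corollary \ref{kostant} and not with some shifted indexing; this is immediate from the statement of Corollary \ref{kostant} but worth stating. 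Once these sign issues are settled the identity is a formal manipulation of characters in $\mathcal R$, and extracting the constant term finishes the proof. $\Box$
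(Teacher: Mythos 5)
Your proposal is correct and follows essentially the same route as the paper: apply Corollary \ref{kostant} with the alternating sum over $i$, replace the cohomology by the Euler characteristic of the Chevalley--Eilenberg complex $\Lambda^\bullet(\goth n^*)\otimes P_\lambda\otimes C_{-\nu}$, and identify $\sum_i(-1)^i\,Ch\,\Lambda^i(\goth n^*)=\prod_{\alpha\in\Delta_0^+}(1-e^{-\alpha})\prod_{\alpha\in\Delta_1^+}(1+e^{-\alpha})^{-m_\alpha}=D^{-1}e^{-\rho}$. Your extra bookkeeping of the $\rho$-shift and of the symmetric-algebra contribution of the odd roots is accurate and just makes explicit what the paper states in one line.
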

\begin{pf} - By proposition \ref{kostant}
$$b(\nu,\lambda)=\sum_i 
(-1)^i \operatorname{dim}H^i(\goth n,P_\lambda\otimes C_{-\nu})^\goth h.$$
Using the complex which computes the Lie superalgebra
cohomology we obtain 
$$\sum_i (-1)^i\;Ch\;H^i(\goth n,P_\lambda\otimes C_{-\nu})=
\sum_i (-1)^i\;Ch(P_\lambda\otimes C_{-\nu}\otimes \Lambda^i(\goth n^*))=$$
$$=e^{-\nu}\;Ch\;P_\lambda\sum_i (-1)^i \;Ch\;\Lambda^i(\goth
n^*).$$
To finish the proof, use 
$$\sum_i (-1)^i \;Ch\;\Lambda^i(\goth n^*)=
\frac{\prod_{\alpha\in\Delta_0^+}(1-e^{-\alpha})}{\prod_{\alpha\in\Delta_1^+}(1+e^{-\alpha})^{m_\alpha}}=D^{-1}e^{-\rho}.$$ 
\end{pf}

\begin{coro}\label{charcor} - One has:
$$Ch\;P_\lambda=D\sum_{\nu+\rho\in \Lambda^+} b(\nu,\lambda)\sum_{w\in W}\varepsilon(w)e^{w(\nu+\rho)}.$$
\end{coro}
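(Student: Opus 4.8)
The statement is an almost immediate consequence of the preceding two corollaries together with the linear independence result from Lemma \ref{symmetry}(b), so the plan is essentially to assemble these pieces. First I would recall that by Corollary \ref{kostcor}, the coefficients $b(\nu,\lambda)$ are, up to the symmetry relation \eqref{coeff}, the ``matrix'' expressing the $[L_\lambda]$ in terms of the $\mathcal E(\nu)$; more precisely one wants to invert the relation $\mathcal E(\nu)=\sum_\lambda b(\nu,\lambda)[L_\lambda]$ to write $[P_\lambda]$ as a combination of the $\mathcal E(\nu)$. The clean way to do this is to work at the level of characters in $\mathcal R$, using that $Ch$ is injective on $\mathcal K(\mathcal F)$.

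The key computational step is to show that the element
$$X_\lambda := D\sum_{\nu+\rho\in\Lambda^+} b(\nu,\lambda)\sum_{w\in W}\varepsilon(w)e^{w(\nu+\rho)}$$
equals $Ch\,P_\lambda$. By the character formula \eqref{char}, $X_\lambda=\sum_{\nu+\rho\in\Lambda^+}b(\nu,\lambda)\,Ch\,\mathcal E(\nu)$, so it suffices to prove $Ch\,P_\lambda=\sum_{\nu+\rho\in\Lambda^+}b(\nu,\lambda)\,Ch\,\mathcal E(\nu)$, equivalently $[P_\lambda]=\sum_{\nu+\rho\in\Lambda^+}b(\nu,\lambda)\,\mathcal E(\nu)$ in $\mathcal K(\mathcal F)$. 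To see this I would compute the constant term of $D^{-1}e^{-\mu-\rho}$ applied to both sides for an arbitrary $\mu$ with $\mu+\rho\in\Lambda^+$. On the left, Corollary \ref{kostcor} says this constant term is exactly $b(\mu,\lambda)$. On the right, I would expand $D^{-1}e^{-\mu-\rho}Ch\,\mathcal E(\nu)=e^{-\mu-\rho}\sum_{w\in W}\varepsilon(w)e^{w(\nu+\rho)}$ and observe that, since both $\mu+\rho$ and $\nu+\rho$ lie in $\Lambda^+$ (a fundamental domain meeting each regular $W$-orbit once, as noted in the proof of Lemma \ref{symmetry}(b)), the only way $w(\nu+\rho)=\mu+\rho$ can occur is $w=1$ and $\nu=\mu$; hence the constant term on the right is $b(\mu,\lambda)$ as well. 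Since the constant terms of $D^{-1}e^{-\mu-\rho}(\cdot)$ for all such $\mu$ separate the relevant characters — this is exactly the linear independence of $\{Ch\,\mathcal E(\nu):\nu+\rho\in\Lambda^+\}$ from Lemma \ref{symmetry}(b), together with the fact (Corollary \ref{charcor}'s implicit input, or rather a standard fact about $\mathcal K(\mathcal F)$) that $[P_\lambda]$ does lie in the span of the $\mathcal E(\nu)$ — we conclude equality.

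The one point needing care, and the main obstacle, is justifying that $[P_\lambda]$ genuinely lies in the $\mathbb Z$-span of $\{\mathcal E(\nu):\nu+\rho\in\Lambda^+\}$ rather than some completion: this is asserted in the introduction (``in the Grothendieck ring the $[P_\lambda]$-s are linear combinations of $\mathcal E(\mu)$-s'') and is the substance behind the BGG reciprocity Theorem \ref{BGG}. Once that finiteness/integrality is in hand, the argument is just the constant-term matching above. I would therefore phrase the proof as: combine Corollary \ref{kostcor} (which identifies $b(\nu,\lambda)$ as the constant term of $D^{-1}e^{-\nu-\rho}Ch\,P_\lambda$), expand $Ch\,P_\lambda$ against the linearly independent family $\{Ch\,\mathcal E(\nu)\}$ using \eqref{char}, read off that the coefficient of $Ch\,\mathcal E(\nu)$ must be $b(\nu,\lambda)$, and then rewrite $Ch\,\mathcal E(\nu)$ via \eqref{char} to obtain the displayed formula; the symmetry \eqref{coeff} guarantees the sum can be taken over the fundamental domain $\nu+\rho\in\Lambda^+$ without ambiguity.
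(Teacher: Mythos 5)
Your reduction of the statement to the identity $[P_\lambda]=\sum_{\nu+\rho\in\Lambda^+}b(\nu,\lambda)\,\mathcal E(\nu)$ and the constant-term pairing against $Ch\,\mathcal E(\mu)$ are correct as far as they go, but the argument is circular at precisely the point you flag as ``the main obstacle''. The functionals $f\mapsto\bigl(\text{constant term of }D^{-1}e^{-\mu-\rho}f\bigr)$ with $\mu+\rho\in\Lambda^+$ only read off the coefficients of the formal series $D^{-1}f$ at the dominant regular points $e^{\mu+\rho}$ and say nothing about the coefficients elsewhere; they therefore do not separate points of $\mathcal R$, and matching them on both sides proves equality only under the prior hypothesis that $Ch\,P_\lambda$ lies in the span of $\{Ch\,\mathcal E(\nu):\nu+\rho\in\Lambda^+\}$. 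You propose to import that hypothesis from the introduction or from Theorem \ref{BGG} --- but Theorem \ref{BGG} is deduced from this corollary, and nothing established before this point in the paper supplies the span statement independently. As written, the key step is assumed rather than proved.

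The missing idea is simply to apply Corollary \ref{kostcor} to \emph{every} $\nu\in\Lambda$, not only to those with $\nu+\rho\in\Lambda^+$. Since the constant term of $D^{-1}e^{-\nu-\rho}Ch\,P_\lambda$ is by definition the coefficient of $e^{\nu+\rho}$ in $D^{-1}Ch\,P_\lambda$, Corollary \ref{kostcor} yields the complete expansion
$$Ch\,P_\lambda=D\sum_{\nu\in\Lambda}b(\nu,\lambda)\,e^{\nu+\rho},$$
with no span hypothesis; the sum is finite because $b(\nu,\lambda)=\sum_i(-1)^i\dim H^i(\mathfrak n,P_\lambda\otimes C_{-\nu})^{\mathfrak h}$ vanishes for all but finitely many $\nu$. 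The symmetry (\ref{coeff}) says that $\nu+\rho\mapsto b(\nu,\lambda)$ is $W$-anti-invariant, so singular orbits contribute nothing (their stabilizers contain a reflection) and each regular orbit, taken at its unique dominant representative $\nu+\rho\in\Lambda^+$, contributes $b(\nu,\lambda)\sum_{w\in W}\varepsilon(w)e^{w(\nu+\rho)}$. This is the paper's one-line proof; substituting it for your span assumption closes the gap, and the rest of your write-up then goes through.
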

\begin{pf} - The statement follows from Corollary \ref{kostcor} and
  (\ref{coeff}) since  Corollary \ref{kostcor} implies
$$Ch\;P_\lambda=D\sum_{\nu\in \Lambda} b(\nu,\lambda)e^{\nu}.$$
\end{pf}

\begin{theo}\label{BGG} We have the following identity in
  $\mathcal K(\mathcal F)$
$$[P_\lambda]=\sum_{\mu+\rho\in\Lambda^+}a(\lambda,\mu)\mathcal
  E(\mu).$$
Moreover, the following analogue of BGG reciprocity holds
$$a(\lambda,\mu)=b(\mu,\lambda).$$
\end{theo}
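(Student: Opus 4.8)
The plan is to obtain both statements by transporting the character identity of Corollary \ref{charcor} back to $\mathcal K(\mathcal F)$ via the injectivity of $Ch$, and then to pin down the coefficients using the linear independence of Lemma \ref{symmetry}(b). So the proof is essentially a bookkeeping step on top of Corollaries \ref{kostcor}--\ref{charcor}.

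First I would rewrite Corollary \ref{charcor}: it gives
$$Ch\,P_\lambda = D\sum_{\nu+\rho\in\Lambda^+} b(\nu,\lambda)\sum_{w\in W}\varepsilon(w)e^{w(\nu+\rho)},$$
and by the character formula (\ref{char}) the inner factor $D\sum_{w\in W}\varepsilon(w)e^{w(\nu+\rho)}$ is precisely $Ch\,\mathcal E(\nu)$. Hence $Ch\,P_\lambda=\sum_{\nu+\rho\in\Lambda^+}b(\nu,\lambda)\,Ch\,\mathcal E(\nu)$. Here one should check the sum is finite: from (\ref{char}) together with the formulas for $D_0$, $D_1$, the weight $\nu$ is the highest weight occurring in $Ch\,\mathcal E(\nu)$ (with coefficient $1$, since $\nu+\rho\in\Lambda^+$ is dominant regular and the shift by the top term $e^{\rho_1}$ of $D_1$ recovers $\nu$ from $\nu+\rho-\rho_0$), while by Corollary \ref{kostcor} any $\nu$ with $b(\nu,\lambda)\neq 0$ lies below a weight of the finite-dimensional module $P_\lambda$ and has $\nu+\rho\in\Lambda^+$; for the groups considered this is a finite set of weights. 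Since $Ch\colon\mathcal K(\mathcal F)\to\mathcal R$ is an injective ring homomorphism and each $\mathcal E(\nu)$ is by definition already an element of $\mathcal K(\mathcal F)$, the character identity lifts to $[P_\lambda]=\sum_{\nu+\rho\in\Lambda^+}b(\nu,\lambda)\,\mathcal E(\nu)$ in $\mathcal K(\mathcal F)$, which is the first assertion (writing $\mu$ for $\nu$).

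For the reciprocity $a(\lambda,\mu)=b(\mu,\lambda)$, recall that $a(\lambda,\mu)$ is defined as the coefficient of $\mathcal E(\mu)$ in a decomposition of $[P_\lambda]$, so it suffices to know that such a decomposition is unique. This follows from Lemma \ref{symmetry}(b): the characters $Ch\,\mathcal E(\mu)$, $\mu+\rho\in\Lambda^+$, are linearly independent in $\mathcal R$, and since $Ch$ is injective the classes $\mathcal E(\mu)$, $\mu+\rho\in\Lambda^+$, are linearly independent in $\mathcal K(\mathcal F)$; comparing the expansion just obtained with the defining one yields $a(\lambda,\mu)=b(\mu,\lambda)$. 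The argument is purely formal once Corollaries \ref{kostcor}--\ref{charcor} are available; the only point needing a moment's care is the finiteness of the expansion, which I would dispatch with the highest-weight remark above so that the identity genuinely lives in $\mathcal K(\mathcal F)$ and not merely in a completion.
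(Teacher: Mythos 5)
Your proof follows the paper's own route exactly: Theorem \ref{BGG} is obtained by substituting the character formula (\ref{char}) into Corollary \ref{charcor}, lifting the resulting identity to $\mathcal K(\mathcal F)$ via the injectivity of $Ch$, and identifying the coefficients by the linear independence of Lemma \ref{symmetry}(b). The one place you go beyond the paper is the finiteness of the sum (which the paper leaves implicit); note that your justification --- that the $\nu$ with $\nu+\rho\in\Lambda^+$ lying below a weight of $P_\lambda$ form a finite set --- is not literally true in the presence of isotropic roots (already for $\goth{gl}(1,1)$ that candidate set is infinite), so finiteness must instead come from the vanishing of all but finitely many $b(\nu,\lambda)$.
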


\begin{pf} - The statement follows from Corollary \ref{charcor} and (\ref{char}). 
\end{pf}

\begin{ex} Let $G=GL(m,n)$ and $B$ be the subgroup of upper triangular
  matrices. Then $\Lambda^+-\rho$ coincides with the set of dominant weights.
Moreover, it is well-known (see for example \cite{P}) that for any $\lambda\in\Lambda^+$, 
$\Gamma_i(G/B,C_\lambda)=0$ if $i>0$. Moreover,
$$\Gamma_0(G/B,C_\lambda)\simeq K_\lambda:=U(\goth g)\otimes_{U(\goth g^+)}L_\lambda^0,$$
where $\goth g^+=\goth g_0+\goth{b}$ and $L_\lambda^0$ is the
irreducible $\goth g_0$-module of highest weight $\lambda$ with trivial action of $\goth b_1$. The
module $K_\lambda$ was first considered in \cite{Krep} and is usually 
called a Kac module. It was proven in \cite{Z} that every indecomposable
projective
module $P_\lambda$ has a filtration by Kac modules $K_\mu$ and that the
multiplicity of $K_\mu$ in $P_\lambda$ equals the multiplicity of  $L_\lambda$ 
in  $K_\mu$. A combinatorial algorithm for calculating
$a(\lambda,\mu)$ in this case was obtained by Brundan, \cite{B}. We
will explain it in Section \ref{wd} after introducing weight diagrams. 
\end{ex}

\section{Classical supergroups $GL(m,n)$ and $SOSP(m,2n)$}

In this section we collect all necessary facts about roots and weights 
for the classical supergroups. So we assume that $G=GL(m,n)$ or $SOSP(m,2n)$.

The lattice $\Lambda$ of all integral weights is 
$$\Lambda=\bigoplus_{i=1}^m\mathbb Z\varepsilon_i\oplus\bigoplus_{j=1}^n\mathbb Z\delta_i.$$
We define a parity homomorphism $p:\Lambda\to \mathbb Z_2$ by
$p(\varepsilon_i)=0, p(\delta_j)=1$ for all $1\leq i\leq m$, $1\leq j\leq n$. 
The invariant form on $\Lambda$ is given by
$$ (\varepsilon_i,\varepsilon_j)=\delta_{ij},\,\,(\varepsilon_i,\delta_j)=0,\,\,(\delta_i,\delta_j)=-\delta_{ij}.$$

Now we recall the description of $\Delta$ (see ~\cite{Kadv}). 

The even roots $\Delta_0$ of $\goth {gl}(m,n)$ are all the vectors of the
form $\varepsilon_i-\varepsilon_j$ and $\delta_i-\delta_j$ with $i\neq
j$. The odd roots $\Delta_1$ of $\goth {gl}(m,n)$ are all the vectors of
the form  $\varepsilon_i-\delta_j$ and $\delta_i-\varepsilon_j$.

The even roots $\Delta_0$ of $\goth {osp}(2m,2n)$ are all the vectors of the
form $\pm\varepsilon_i\pm\varepsilon_j$, $\pm\delta_i\pm\delta_j$
(the signs can be chosen independently) with $i\neq j$ and $2\delta_i$. The odd roots $\Delta_1$ of 
$\goth {osp}(2m,2n)$ are all the vectors of
the form  $\pm \varepsilon_i\pm \delta_j$. 

The even roots $\Delta_0$ of $\goth {osp}(2m+1,2n)$ are all the vectors of the
form $\pm\varepsilon_i\pm\varepsilon_j$, $\pm\delta_i\pm\delta_j$
with $i\neq j$, $\pm\varepsilon_i$ and $\pm 2\delta_i$. The odd roots $\Delta_1$ of 
$\goth {osp}(2m+1,2n)$ are all the vectors of
the form  $\pm \varepsilon_i \pm\delta_j$ and $\pm\delta_i$.

From now on we fix a Borel subalgebra of $\mathfrak g$, we make the same
choice as in \cite{VeraCaroI}.
Below is the list of the simple roots for our choice of Borel
subalgebras. 

$\bullet$ If $\goth g=\goth {gl}(m,n)$, $m\geq n$, the simple roots are
$$\varepsilon_1-\varepsilon_2,\varepsilon_2-\varepsilon_3,...,\varepsilon_m-\delta_1,\delta_1-\delta_{2},...,\delta_{n-1}-\delta_n,$$
$$\rho=\frac{m-n-2}{2}\varepsilon_1+\frac{m-n-4}{2}\varepsilon_2+...+\frac{-m-n}{2}\varepsilon_m+
\frac{m+n}{2}\delta_1+...+\frac{m-n+2}{2}\delta_n;$$

$\bullet$ If $\goth g=\goth {osp}(2m+1,2n)$ and $m\geq n$, the simple
roots are
$$\varepsilon_1-\varepsilon_2,...,\varepsilon_{m-n+1}-\delta_1,\delta_1-\varepsilon_{m-n+2},...,\varepsilon_{m}-\delta_n,\delta_n,$$
$$\rho=-\frac{1}{2}\sum_{i=1}^m \varepsilon_i+\frac{1}{2}\sum_{j+1}^n\delta_j+\sum_{i=1}^{m-n}(m-n-i+1)\varepsilon_i;$$
 
$\bullet$ If $\goth g=\goth {osp}(2m+1,2n)$ and $m<n$, the simple
roots are
$$\delta_1-\delta_2,...,\delta_{n-m}-\varepsilon_1,\varepsilon_1-\delta_{n-m+1},...,\varepsilon_{m}-\delta_n,\delta_n,$$
$$\rho=-\frac{1}{2}\sum_{i=1}^m
\varepsilon_i+\frac{1}{2}\sum_{j+1}^n\delta_j+\sum_{j=1}^{n-m}(n-m-j)\delta_j;$$

$\bullet$ If $\goth g=\goth {osp}(2m,2n)$ and $m>n$, the simple
roots are
$$\varepsilon_1-\varepsilon_2,...,\varepsilon_{m-n}-\delta_1,\delta_1-\varepsilon_{m-n+1},...,\delta_n-\varepsilon_m,\delta_n+\varepsilon_m,$$
$$\rho=\sum_{i=1}^{m-n}(m-n-i)\varepsilon_i;$$

$\bullet$ If $\goth g=\goth {osp}(2m,2n)$ and $m\leq n$, the simple
roots are
$$\delta_1-\delta_2,...,\delta_{n-m+1}-\varepsilon_1,\varepsilon_1-\delta_{n-m+2},...,\delta_n-\varepsilon_m,\delta_n+\varepsilon_m,$$
$$\rho=\sum_{i=1}^{n-m}(n-m-i+1)\delta_i.$$

Finally, we give a description of $\Lambda^+$.
Let 
$$\lambda+\rho=a_1\varepsilon_1+...+a_m\varepsilon_m+b_1\delta_1+...+b_n\delta_n.$$
Then $\lambda$ is integral iff $a_i,b_j\in\mathbb Z$ for $G=GL(m,n)$
or $SOSP(2m,2n)$, and 
$a_i,b_j\in\frac{1}{2}+\mathbb Z$ for $G=SOSP(2m+1,2n)$.
Furthermore, $\lambda+\rho\in \Lambda^+$ if  

$$a_1>a_2>...>a_m \;,\; b_1>b_2>...>b_n\;\;\text{if}\;\; G=GL(m,n);$$

$$a_1>a_2>...>a_{m}\geq \frac{1}{2} \; , \;b_1>b_2>...>b_{n}\geq\frac{1}{2}\;\;\text{if}\;\; G=SOSP(2m+1,2n);$$

$$a_1>a_2>...>a_{m-1} > |a_{m}| \; and  \; b_1>b_2>...>b_{n}>0,\;\;\text{if}\;\; G=SOSP(2m,2n).$$

Every $\lambda\in\Lambda^+-\rho$ is dominant. If $G=GL(m,n)$, the set of
dominant weights coincides with $\Lambda^+-\rho$. In the orthosymplectic case
we will formulate the condition of dominance in the next section. 

\section{Weight diagrams}\label{wd}

We recall the definitions and notations for weight diagrams,
introduced in \cite{BS1} for $GL(m,n)$, and in 
\cite{VeraCaroI} for $SOSP(m,2n)$. Note that our notations slightly differ
from those in  \cite{BS1}, for translation see \cite{MS}.

Let $\mathbb T\subset \mathbb R$ be a discrete set,
$X=(x_1,...,x_m)\in
\mathbb T^m$,
$Y=(y_1,...,y_n)\in\mathbb T^n$. A diagram 
$f_{X,Y}$ is a function defined on $\mathbb T$ whose values are 
multisets with elements $<,>,\times$ according to the following algorithm.

$\bullet$ Put the symbol $>$ in position  $t$ for all $i$ such that $x_i=t$.

$\bullet$ Put the symbol $<$ in position $t$ for all $i$ such that $y_i=t$.

$\bullet$ If there are both $>$ and $<$ in the same position replace them by 
the symbol $\times $, repeat if possible.

Thus, $f_{X,Y}(t)$ may contain at most one of the two symbols $>,<$.
We represent $f_{X,Y}$ by the picture with $\circ$ standing in position 
$t$ whenever $f(t)$ is an empty set.

Let  $G=GL(m,n)$. Let $\lambda$ be a dominant integral weight such that
$$\lambda+\rho=a_1\varepsilon_1+...+a_m\varepsilon_m+b_1\delta_1+...+b_n\delta_n.$$

Set $\mathbb T=\mathbb Z$, 
$$X_{\lambda}=(a_1,...,a_m), Y_{\lambda}=(-b_1,...,-b_n).$$ 
The diagram $f_{\lambda}=f_{X_{\lambda},Y_{\lambda}}$ is called the
{\it weight diagram} of $\lambda$.

A diagram is the weight diagram of some dominant weight if and only if
$f(t)$ is empty or is a single element set since both sequences
$a_1,...,a_m$ and $b_1,...,b_m$ are strictly decreasing and hence do
not have repetitions.

Now let $G=SOSP(2m,2n)$. 
Set $\mathbb T=\mathbb Z_{\geq  0}$.
For a dominant weight $\lambda$ such that 
$\lambda+\rho=a_1\varepsilon_1+...+a_m\varepsilon_m+b_1\delta_1...+b_n\delta_n$
let 
$$X_{\lambda}=(a_1,...,a_{m-1},|a_m|), Y_{\lambda}=(b_1,...,b_n), 
f_{\lambda}=f_{X_{\lambda},Y_{\lambda}}.$$

The position $0$ is called the {\t tail position}. If the tail position is
empty we put $[+]$ or $[-]$ before the diagram if $a_m>0$ or $a_m<0$ respectively.

A diagram $f_{\lambda}$ is the weight diagram of a
dominant $\lambda$ if and only if

$\bullet$ for any $t\neq 0$, $f_{\lambda}(t)$ is empty or a single element set;

$\bullet$ the multiset $f_{\lambda}(0)$ does not contain $<$, contains $>$ with
multiplicity at most 1 (it may contain any number of $\times$).

A diagram $f_\lambda$ and a weight $\lambda$ are called {\it tailless} if $f(0)$ does not contain
$\times$. A weight $\lambda$ is tailless iff $\lambda+\rho\in\Lambda^+$.

Finally, let  $G=SOSP(2m+1,2n)$.
 Let 
$\mathbb T=\frac{1}{2}+\mathbb Z_{\geq 0}$ and define
$X_{\lambda}$, $Y_{\lambda}$ and $f_{\lambda}$ as in the case
$\goth g=\goth{osp}(2m,2n)$.
The dominance condition is equivalent to the following condition on a
weight diagram $f$

$\bullet$ $f(t)$ is empty or a single element set for any $t\neq \frac{1}{2}$;

$\bullet$ $f(\frac{1}{2})$ may contain at most one of $<$ or  $>$ and any
number of $\times$. 

The position $\frac{1}{2}$ is called the {\it tail position}.
It is possible that two dominant weights have
the same weight diagram. That may happen if $f(\frac{1}{2})$ does not contain
$>$ or $<$ and has at least one $\times$. For example,
the diagram with two $\times$ at $\frac{1}{2}$ corresponds to
$(\frac{1}{2},-\frac{1}{2}|\frac{1}{2},\frac{1}{2})$ and to
$(-\frac{1}{2},-\frac{1}{2}|\frac{1}{2},\frac{1}{2})$. 
So if the weight diagram has at least one $\times $
and no $<,>$ at the position $\frac{1}{2}$ we put an indicator (which we sometimes refer to as "sign") $(\pm)$ before the weight
diagram. Its value is $+$ if the corresponding weight has the form 
$$\lambda+\rho=(a_1,...,a_{m-s},\frac{1}{2},-\frac{1}{2},...,-\frac{1}{2}|b_1,...,b_{n-s},\frac{1}{2},...,\frac{1}{2}),$$
and $-$ if the corresponding weight has the form
$$\lambda+\rho=(a_1,...,a_{m-s},-\frac{1}{2},-\frac{1}{2},...,-\frac{1}{2}|b_1,...,b_{n-s},\frac{1}{2},...,\frac{1}{2}),$$
where $s$ is the number of crosses at the position $\frac{1}{2}$.

A weight $\lambda$ is {\it tailless} if $f_\lambda (\frac{1}{2})$ has
at most one symbol, and if this symbol is $\times$ the
indicator is $+$. As in the case $G=SOSP(2m,2n)$ a weight is tailess
iff $\lambda+\rho\in\Lambda^+$.  

Thus, tailless diagrams are exactly the diagrams of weights in $\Lambda^+-\rho$.

Recall that the number of $\times$ in the diagram is called 
the degree of atypicality of the corresponding weight. 
The symbols $>$, $<$ are called the {\it core symbols} of the
diagram. The {\it core} of a weight $\lambda$ is the diagram
$f_\lambda$ with all $\times$ and the sign indicator removed except
the case  when $G=SOSP(2m,2n)$ and there are no $\times$. 
As follows from \cite{VeraCaroI} two weights admit the same central character (i.e. the
corresponding simple modules belong to the same block) if the cores of
the two weights are the same.
Thus, if two weights have the same central
character, then they have the same degree of atypicality. Hence the degree of
atypicality of a central character $\chi$ is well defined. We denote
it by $at(\chi)$. A weight ( resp. central character) is typical if its
degree of atypicality is zero. If $\chi$ is typical, then $\mathcal F_\chi$ is semisimple and has
only one simple object up to isomorphism.

Let $\lambda$ be dominant and tailless. Following Brundan and Stroppel  
\cite{BS4}
we define the cap diagram of $f_\lambda$ by the following rules. The
left end of a cap is at a $\times$ and the right end is at an empty
position. We start          
from the rightmost $\times$, make the cap by             
joining it to the next free position on the right  (the end is not free any more), and then repeat for the        
next $\times$ to the left, and so on until there is no $\times$ left.  There is no empty position under any cap. One can see
that for each weight diagram there is a unique cap diagram. 
\begin{ex} For the following weight diagram,

\xymatrix{
&&&&&&\\
{\circ} & {\times} & > & < & \times& \circ & \circ }

~\\
the caps are the following: 
 
 \xymatrix{
 & & & & & & \\
{\circ} & {\times}\ar@{-} `u[ur] `[rrrrr]  [rrrrr] & > & < & \times\ar@{-} `u[r] `[r]  [r] & \circ & \circ . }

~\\

 \end{ex}

Denote by $\mathcal P (\lambda)$ the set of all the weights $\mu$ with
weight diagram $f_{\mu}$ which can be obtained from $f_{\lambda}$  by 
moving some of the $\times$-s from the left end to the right end of a
cap. Note that the cardinality of  $\mathcal P (\lambda)$ is $2^k$,
where $k$ is the total number of $\times$.

If $G=GL(m,n)$, then all diagrams are tailless.
The following result was proven in \cite{B}.

\begin{prop}\label{br} - Let $G=GL(m,n)$. Then
$$[P(\lambda)] = \sum _{\mu \in \mathcal P (\lambda)}  \mathcal E  (\mu).$$ 

\end{prop}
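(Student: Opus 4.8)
The plan is to reduce Proposition~\ref{br} to the general BGG reciprocity of Theorem~\ref{BGG} together with Brundan's combinatorial computation of the Kac-module multiplicities. By Theorem~\ref{BGG} we know that $[P_\lambda]=\sum_{\mu+\rho\in\Lambda^+}a(\lambda,\mu)\mathcal E(\mu)$ with $a(\lambda,\mu)=b(\mu,\lambda)=[\mathcal E(\mu):L_\lambda]$. In the $GL(m,n)$ case the Example after Theorem~\ref{BGG} identifies $\mathcal E(\mu)$ with the class $[K_\mu]$ of the Kac module, since $\Gamma_i(G/B,C_\mu)=0$ for $i>0$ and $\Gamma_0(G/B,C_\mu)\simeq K_\mu$. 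Hence the assertion is equivalent to showing that $[K_\mu:L_\lambda]=1$ if $\mu\in\mathcal P(\lambda)$ and $0$ otherwise; equivalently, that $P_\lambda$ has a Kac flag whose subquotients are exactly the $K_\mu$ for $\mu\in\mathcal P(\lambda)$, each with multiplicity one.

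First I would recall that for $GL(m,n)$ every dominant integral weight lies in $\Lambda^+-\rho$ and every diagram is tailless, so there is no discrepancy between the indexing set of the $L_\lambda$'s and that of the $\mathcal E(\mu)$'s. Next I would invoke Brundan's Kazhdan--Lusztig type theorem from \cite{B}: the decomposition numbers $[K_\mu:L_\lambda]$ are computed by his cap/cup combinatorics on weight diagrams, and the upshot is precisely that $K_\mu$ is multiplicity-free with composition factors $L_\lambda$ for those $\lambda$ obtained from $f_\mu$ by moving crosses along caps. Dualizing the partial order, this says $[K_\mu:L_\lambda]=1$ exactly when $f_\mu$ is obtained from $f_\lambda$ by moving some crosses from the left (cross) end to the right (empty) end of caps of $f_\lambda$, i.e.\ when $\mu\in\mathcal P(\lambda)$. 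Plugging $a(\lambda,\mu)=[\mathcal E(\mu):L_\lambda]=[K_\mu:L_\lambda]$ into Theorem~\ref{BGG} then gives $[P_\lambda]=\sum_{\mu\in\mathcal P(\lambda)}\mathcal E(\mu)$, which is the claim.

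The one point needing care is the bookkeeping between the two orders and the cap-diagram description: one must check that Brundan's ``which $L_\lambda$ occur in $K_\mu$'' list, when read backwards as ``which $K_\mu$ contain a fixed $L_\lambda$'', matches the combinatorial operation defining $\mathcal P(\lambda)$ (moving crosses from the left end to the right end of a cap of $f_\lambda$), and in particular that the resulting $\mu$ are all dominant and pairwise distinct, so that $|\mathcal P(\lambda)|=2^k$ with $k$ the number of crosses and all multiplicities are $1$. This is a matter of comparing the definition of caps for $f_\lambda$ with the definition of cups used in \cite{B} for $f_\mu$; since a cap of $f_\lambda$ becomes a cup of $f_\mu$ once the crosses are moved, the two descriptions coincide. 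I expect this diagram-matching to be the main (and essentially only) obstacle; everything else is a direct citation of Theorem~\ref{BGG}, the $GL(m,n)$ Example, and \cite{B}.

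\begin{pf} For $G=GL(m,n)$ the Example following Theorem~\ref{BGG} shows $\mathcal E(\mu)=[K_\mu]$ for all $\mu\in\Lambda^+-\rho$, and all dominant weights lie in $\Lambda^+-\rho$. By Theorem~\ref{BGG}, $[P_\lambda]=\sum_{\mu}a(\lambda,\mu)\mathcal E(\mu)$ with $a(\lambda,\mu)=b(\mu,\lambda)=[K_\mu:L_\lambda]$. By Brundan's computation of the decomposition numbers of Kac modules \cite{B}, $[K_\mu:L_\lambda]=1$ when $f_\mu$ is obtained from $f_\lambda$ by moving some crosses from the left end to the right end of caps of $f_\lambda$, i.e.\ when $\mu\in\mathcal P(\lambda)$, and $[K_\mu:L_\lambda]=0$ otherwise; moreover the $\mu\in\mathcal P(\lambda)$ are exactly $2^k$ distinct dominant weights, $k$ being the number of crosses of $f_\lambda$. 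Substituting gives $[P(\lambda)]=\sum_{\mu\in\mathcal P(\lambda)}\mathcal E(\mu)$. \end{pf}
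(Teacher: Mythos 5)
Your argument is correct and matches the paper's treatment: the paper gives no proof of Proposition~\ref{br} beyond the citation to \cite{B}, and the route you spell out (Theorem~\ref{BGG} giving $a(\lambda,\mu)=b(\mu,\lambda)$, the $GL(m,n)$ Example identifying $\mathcal E(\mu)$ with $[K_\mu]$, and Brundan's computation of $[K_\mu:L_\lambda]$ in cap-diagram terms) is exactly the chain of facts the surrounding text assembles. Your caveat about matching Brundan's cup combinatorics for $f_\mu$ against the cap combinatorics for $f_\lambda$ is the right point to be careful about, and it is handled correctly.
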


\section{Categorification of $\mathfrak{gl}_{\infty /2}$ in orthosymplectic terms}\label{cat}

In this section we assume that $G=SOSP(2m+1,2n)$.
We denote by $\mathcal K(\mathcal E)$ the subgroup of $\mathcal K(\mathcal F)$ (resp. $\mathcal K(P)$)  
generated by the Euler characteristics $\mathcal E(\lambda)$ for all
$\lambda+\rho\in\Lambda^+$ (resp. by the classes of indecomposable projective
modules $P_\lambda$ for all dominant $\lambda$).

\subsection {Representation of  $\mathfrak{gl}_{\infty /2}$}
We denote by $\mathfrak{gl}_{\infty /2}$ the infinite dimensional Lie
algebra defined over $\mathbb Z$ with Dynkin diagram
$$\circ - \circ - \circ - \circ  \ldots ,$$ and by $V$ its standard
representation with basis 
$v_{1/2}, v_{3/2}, \ldots$. We note $w_{1/2}, w_{3/2}, \ldots$ the
basis in $V^*$ such that $\langle w_i,v_j\rangle=\delta_{ij}(-1)^{i+1/2}$. Let $E_{i,j}$ be the element of 
$\mathfrak{gl}_{\infty /2}$ which acts on $V$ the following way:
$E_{i,j}(v_k) = \delta _{jk}v_i$ and on 
$V^*$: $E_{i,j}(w_k) = (-1)^{i+j}\delta _{ik}w_j $.

Consider the representation (defined in the natural way) $\Lambda ^m (V^*)\otimes \Lambda ^n (V)$ of 
$\mathfrak{gl}_{\infty /2}$. Then, if $\lambda$ is a tailless dominant weight for  $\mathfrak{osp}(2m+1,2n)$, 
such that $\lambda + \rho = \sum _{i=1} ^m a_i \varepsilon _i + \sum _{j=1}  ^n b_j \delta _j$ 
with $a_i,b_j \in 1/2 + \mathbb Z$, we associate to $\lambda$ the following vector in 
$\Lambda ^m (V^*)\otimes \Lambda ^n (V)$: 
$$x_{\lambda}:= w_{a_1}\wedge \ldots \wedge w_{a_m} \otimes v_{b_1}\wedge \ldots \wedge v_{b_n}.$$  
We denote the weight of the action of $\mathfrak{gl}_{\infty /2}$ on $v_i$ by $\gamma _i$ (hence, 
the weight of the action on $w_i$ is $-\gamma _i$), so that every vector $x_{\lambda}$ is equipped with 
a $\mathfrak{gl}_{\infty /2}$-weight which we denote by $\gamma (\lambda)$.

\begin{rem} - The weights with the same core have the same $\mathfrak{gl}_{\infty /2}$-weight. Therefore 
for every dominant weights $\lambda, \mu$, one has
$$\chi _{\lambda} = \chi _{\mu} \Leftrightarrow \gamma (\lambda) = \gamma (\mu).$$
\end{rem}

Define
the map $\varphi: \mathcal K(\mathcal E)\to \Lambda ^m (V^*)\otimes\Lambda ^n (V)$ by
$\varphi(\mathcal E(\lambda))=x_\lambda$. Obviously, $\varphi$ is an
isomorphism of abelian groups.

\subsection {Interpretation of the translation functors in this setting}

Since the weight of the $\mathfrak{gl}_{\infty /2}$-action encodes the central
character, every block in $\mathcal F$ can be parametrised by a 
$\mathfrak{gl}_{\infty /2}$-weight $\gamma$, hence $\mathcal F = 
\oplus _{\gamma} \mathcal F_{\gamma}$. The translation functors consist 
in tensoring with the standard module $E$ of $\mathfrak{osp}(2m+1,2n)$ 
and then projecting in the appropriate block.

For any $M \in \mathcal F _{\gamma}$, we set:
$$T_{a,a+1}(M) = (M\otimes E)_{\gamma + \gamma _a - \gamma _{a+1}},$$
$$T_{a+1,a}(M) = (M\otimes E)_{\gamma + \gamma _{a+1} - \gamma _{a}}.$$
The functors $T_{a,a+1}$ and $T_{a+1,a}$, being exact, induce linear 
operators in the Grothendieck group $\mathcal K (\mathcal F)$, and we keep 
the same notations for them. Note also that $T_{a,a+1}$ and
$T_{a+1,a}$ are adjoint, i.e.
\begin{equation}\label{adjoint}
Hom_\goth g(M,T_{a,a+1}(N))=Hom_\goth g(T_{a+1,a}(M),N)
\end{equation}

Since, as can be read in \cite{VeraCaroI} (Lemma 1(b)and Corollary 1) we have
$$\Gamma_i(G/B,C_{\lambda}\otimes E)= \Gamma_i(G/B,C_{\lambda})\otimes E,$$
we see that $\mathcal K(\mathcal E)$ 
is invariant under both $T_{a,a+1}$ and $T_{a+1,a}$.
\begin{leme} \label{Lemmaa} - For any $a\geq \frac{1}{2}$ one has 
$$\varphi \circ T_{a+1,a} = E_{a+1,a}\circ \varphi$$
and
$$\varphi \circ T_{a,a+1} = E_{a,a+1}\circ \varphi .$$ 
\end{leme}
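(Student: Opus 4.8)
The plan is to compute both sides on characters (or equivalently in the Grothendieck ring) and match them coefficient by coefficient, using the character formula (\ref{char}) for $\mathcal E(\lambda)$ together with the explicit description of the translation functor $T_{a,a+1}$ as tensoring with the standard module $E$ followed by projection to a block. Since $\varphi$ is an isomorphism of abelian groups sending $\mathcal E(\lambda)$ to $x_\lambda$, it suffices to check that for every tailless dominant $\lambda$ one has $\varphi(T_{a+1,a}\mathcal E(\lambda)) = E_{a+1,a}x_\lambda$, and similarly for $T_{a,a+1}$; by linearity this is enough, and the two identities are formally symmetric so I will treat $T_{a+1,a}$ in detail and indicate the change for $T_{a,a+1}$.

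First I would unwind the left-hand side. By the formula displayed before Lemma \ref{Lemmaa}, $\Gamma_i(G/B, C_\lambda \otimes E) = \Gamma_i(G/B,C_\lambda)\otimes E$, so in the Grothendieck ring $\mathcal E(\lambda)\cdot[E] = \sum_{\mu}\mathcal E(\lambda+\text{wt}(e_\mu))$, where $e_\mu$ runs over a weight basis of $E$; more precisely, since $E$ is the standard $\mathfrak{osp}(2m+1,2n)$-module with weights $\pm\varepsilon_i$, $\pm\delta_j$ (and $0$), the weight spaces of $C_\lambda\otimes E$ give a sum of $C_{\lambda\pm\varepsilon_i}$, $C_{\lambda\pm\delta_j}$, $C_\lambda$. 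Applying $\mathcal E$ to each and using Lemma \ref{symmetry}(a) to normalize every summand so that its highest weight plus $\rho$ lies in $\Lambda^+$, each term becomes $\pm\mathcal E(\nu)$ for a unique tailless $\nu$, or is killed if the $W$-orbit of $\lambda+\rho\pm(\text{weight})$ has nontrivial stabilizer. Then I project onto the block $\gamma(\lambda)+\gamma_{a+1}-\gamma_a$: on the $\varphi$ side this picks out exactly those summands whose $\mathfrak{gl}_{\infty/2}$-weight is $\gamma(\lambda)+\gamma_{a+1}-\gamma_a$, i.e. those obtained from $x_\lambda$ by replacing one symbol $w_a$ or $v_{a+1}$ appropriately.

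Next I would unwind the right-hand side directly from the definition of $E_{a+1,a}$ on $\Lambda^m(V^*)\otimes\Lambda^n(V)$. The operator $E_{a+1,a}$ acts on $v_k$ by $\delta_{ak}v_{a+1}$ and on $w_k$ by $(-1)^{a+1+a}\delta_{a+1,k}w_a = -\delta_{a+1,k}w_a$. On the wedge $x_\lambda = w_{a_1}\wedge\cdots\wedge w_{a_m}\otimes v_{b_1}\wedge\cdots\wedge v_{b_n}$ this produces, by the Leibniz rule, a sum over the entries equal to $a$ among the $b_j$ (replace that $v_a$ by $v_{a+1}$) and over the entries equal to $a+1$ among the $a_i$ (replace that $w_{a+1}$ by $w_a$, with the sign from $E_{a+1,a}$ acting on $w$), together with the antisymmetrization sign incurred when the resulting entry must be reordered or when it collides with an existing entry (giving $0$). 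The key bookkeeping step is to match these signs with the $\varepsilon(w)$ signs coming from Lemma \ref{symmetry}(a): moving $a_i = a+1$ down to $a$, when the value $a$ already occurs among the $b$'s, corresponds on the osp side to a weight $\lambda'+\rho$ whose entries are no longer strictly dominant, and reflecting it back into $\Lambda^+$ introduces precisely the sign $(-1)$ matching the Koszul sign in the wedge; when $a$ does not already occur, both sides land directly in $\Lambda^+$ with no sign.

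The main obstacle, and the place where care is genuinely needed, is the sign matching: verifying that the Koszul/antisymmetrization signs produced by $E_{a+1,a}$ on the exterior powers agree with the $\varepsilon(w)$ from Lemma \ref{symmetry}(a) after normalizing each Euler characteristic into $\Lambda^+$, including handling the tail position (position $\frac{1}{2}$, with its indicator $(\pm)$) and the boundary case $a=\frac{1}{2}$ separately, and confirming that a collision of values — which makes a wedge vanish — corresponds exactly to the case where $\lambda+\rho\pm(\text{weight})$ has nontrivial $W$-stabilizer, so that $\mathcal E$ of it vanishes by Lemma \ref{symmetry}(a). Once this dictionary between ``reorder-to-dominant'' on the weight side and ``Koszul sign'' on the wedge side is set up cleanly, both $T_{a+1,a}\mathcal E(\lambda)$ and $\varphi^{-1}E_{a+1,a}x_\lambda$ are the same explicit $\mathbb{Z}$-linear combination of $\mathcal E(\nu)$'s, and the lemma follows; the argument for $T_{a,a+1}$ versus $E_{a,a+1}$ is identical with the roles of raising and lowering interchanged, and one may alternatively deduce it from the $T_{a+1,a}$ case using the adjunction (\ref{adjoint}) together with the fact that $E_{a,a+1}$ and $E_{a+1,a}$ are adjoint with respect to the bilinear form on $\Lambda^m(V^*)\otimes\Lambda^n(V)$ induced by $\langle w_i,v_j\rangle$.
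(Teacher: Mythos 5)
Your strategy is the one the paper itself uses: expand $T_{a,a+1}\mathcal E(\lambda)$ via $\Gamma_i(G/B,C_\lambda\otimes E)=\Gamma_i(G/B,C_\lambda)\otimes E$ into $\sum_\mu\mathcal E(\mu)$ over $\mu=\lambda\pm\varepsilon_j,\lambda\pm\delta_k$ lying in the target block, discard the terms killed by Lemma \ref{symmetry}, and compare with the action of the Chevalley generator on the wedge (the paper records the outcome case by case on weight diagrams in figure 1). But your sign bookkeeping contains a concrete error that would make the computation fail to close. You evaluate $E_{a+1,a}(w_{a+1})=(-1)^{(a+1)+a}w_a=-w_a$; however the indices here are \emph{half-integers}, so $2a+1$ is an even integer and $(-1)^{(a+1)+a}=+1$. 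The twist $(-1)^{i+j}$ in the definition of the action on $V^*$ is arranged precisely so that the generators $E_{a,a\pm1}$ act on both $V$ and $V^*$ with coefficient $+1$. With your sign, every term of $E_{a+1,a}x_\lambda$ in which a $w$-factor is lowered would carry a $-1$ that has no counterpart on the $T_{a+1,a}$ side, and the asserted identity would be false.

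Relatedly, the ``main obstacle'' you plan to resolve --- matching Koszul reordering signs and the $\varepsilon(w)$ signs from Lemma \ref{symmetry} --- does not actually arise for $a\geq\frac{1}{2}$, and recognizing this is the substance of the proof. Each relevant $\mu$ is obtained from $\lambda$ by shifting a single coordinate of $\lambda+\rho$ by exactly one unit between the positions $a$ and $a+1$, both of which are positive. Hence either the coordinates remain strictly decreasing, in which case the wedge needs no reordering and $\mu+\rho\in\Lambda^+$ directly, or two coordinates of the same type collide, in which case the wedge vanishes and simultaneously $\mathcal E(\mu)=0$ because $\mu+\rho$ is orthogonal to an even root. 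No coordinate ever crosses $0$, so the sign-changing reflections --- which do produce a genuine $-1$, but only for the switch functor, treated separately in Lemma \ref{propalpha} --- never enter. Every surviving term therefore occurs with coefficient $+1$ on both sides, which is exactly what figure 1 records. Once you correct the sign of $E_{a+1,a}$ on $V^*$ your argument reduces to this case check and goes through; as written, the two sides disagree.
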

\begin{pf} - Let $\gamma '$ be a $\mathfrak{gl} _{\infty /2}$-weight and let
$\lambda$ be a tailless dominant $\mathfrak{osp}(2m+1,2n)$-weight with
$\gamma(\lambda)=\gamma$.
Corollary 1 in \cite{VeraCaroI} implies that 
\begin{equation}\label{tens}
\Gamma _i (G/B,(C_{\lambda}\otimes E)_{\Phi ^{-1}(\gamma ')})
= (\Gamma _i(G/B,C_{\lambda})\otimes E)_{\gamma '},
\end{equation}
where ${\Phi ^{-1}(\gamma ')}$ is the set of all weights $\mu$ such
that $\gamma(\mu)=\gamma'$.

The $\mathfrak b$-module $L_{\lambda}(\mathfrak b)\otimes E$ has a filtration 
by all the $C_{\mu}$ with $\mu = \lambda \pm \varepsilon _j$, 
$\lambda \pm \delta _k$, $\lambda$. Since the Euler characteristic 
of the associated graded sheaf coincides with the Euler characteristic
of the original sheaf, we obtain
$$T_{a,a+1}(\mathcal E(\lambda))=\sum_{\mu}\mathcal E(\mu),$$
where the sum is taken over all 
$\mu=\lambda\pm\varepsilon_j,\lambda\pm\delta_k$
such that $\gamma(\mu)=\gamma(\lambda)+\gamma_a-\gamma_{a+1}$.

By direct inspection if $\mu$ is not dominant, 
then $(\mu+\rho,\beta)=0$ for some even root $\beta$ and therefore by
(\ref{char}) $\mathcal E(\mu)=0$. Hence in fact only dominant $\mu$
contribute in the summation. To find all such $\mu$
we use the weight diagram 
$f_{\lambda}$ of $\lambda$, and we only have to look at the positions 
$a$ and $a+1$ in $f_{\lambda}$. Here is a table of the different 
$f_{\mu}$ which can occur (see figure 1). Note that since only tailless weights appear, if the left position is $\frac{1}{2}$,  and there is a $\times$ then the sign before it should be $(+)$ and we omit it for simplicity of notation.

\bigskip

$$\begin{array}{ccccccccc}
f_{\lambda} & a & a+1 & f_{\mu} & a & a+1 &\\
&&&&&\\
& \circ & \circ && \emptyset &&&&\\
&\circ & < && <&\circ&&&\\
& \circ & >&&\emptyset &&&&\\
& \circ & \times && <&>&&&\\
& < & \circ && \emptyset &&&&\\
& < & < && \emptyset &&&&\\
& < & > && \emptyset &&&&\\
& <  & \times && \emptyset& &&&\\
& > & \circ && \circ&>&&&\\
& > & < && \circ\;&\times&\oplus &\times \;& \circ\\
& > & > && \emptyset&&&&\\
& > & \times && \times & >&&&\\
& \times & \circ && <&>&&&\\
& \times & < && <&\times&&&\\
& \times & > && \emptyset&&&&\\
& \times & \times && \emptyset &&&&\\
&&(figure \; 1)&&&&
\end{array}$$
 
Now we conclude, noticing that $$T_{a,a+1}(\mathcal E _{\lambda}) = \sum _{\mu}
 \mathcal E _{\mu}$$ with the weights $\mu$ we computed, and since
$$E_{a,a+1}(x_{\lambda}) = \sum _{\mu} x_{\mu}$$
by construction, we have proved the first identity. The proof of the
second identity is similar and we leave it to the reader.
\end{pf}
\begin{rem} -  The operators $E_{a,a+1}$ and $E_{a+1,a}$ satisfy the Serre 
relations.
\end{rem}

\subsection{The switch functor} 
Recall that the switch functor $sw:\mathcal F_\gamma\to\mathcal F_\gamma$ is
defined by
$$sw(M)=(M\otimes E)_\gamma.$$
We  also denote by $sw$ the corresponding linear operator in $\mathcal K(\mathcal F)$.

The following proposition explains how $sw$ acts on $\mathcal E(\lambda)$.

\begin{leme}\label{propalpha} - Let $\lambda$ be a tailless dominant
  weight. 
If $f_\lambda$ has an empty tail position, then
$sw(\mathcal E(\lambda))=\mathcal E(\lambda)$. 

If $f_\lambda$ has $\times$ at the tail position, then
$sw(\mathcal E(\lambda))=-\mathcal E(\lambda)$. 

If $f_\lambda$ has $>$ or $<$ at the tail position, then
$sw(\mathcal E(\lambda))=0$. 
\end{leme}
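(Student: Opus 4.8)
The plan is to compute $sw(\mathcal E(\lambda))$ using exactly the same filtration argument as in the proof of Lemma \ref{Lemmaa}, but now with no shift of block. First I would recall that $sw(\mathcal E(\lambda)) = (\mathcal E(\lambda)\otimes E)_\gamma$, and that the $\mathfrak b$-module $C_\lambda \otimes E$ has a filtration by the one-dimensional modules $C_\mu$ with $\mu = \lambda$, $\lambda \pm \varepsilon_j$, $\lambda \pm \delta_k$ (this follows from the weights of the standard $\mathfrak{osp}(2m+1,2n)$-module $E$, namely $0$ and $\pm\varepsilon_j$, $\pm\delta_k$). Since the Euler characteristic of the associated graded sheaf equals that of the original sheaf (by $\chi$-additivity in exact sequences), we get
$$sw(\mathcal E(\lambda)) = \sum_\mu \mathcal E(\mu),$$
where the sum runs over those $\mu$ in the above list that lie in the same block as $\lambda$, i.e. $\gamma(\mu) = \gamma(\lambda)$, equivalently $\chi_\mu = \chi_\lambda$.

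Next I would analyze which $\mu$ survive. By the same reasoning used in Lemma \ref{Lemmaa}, a non-dominant $\mu$ contributes zero: one checks that if $\mu = \lambda \pm \varepsilon_j$ or $\lambda \pm \delta_k$ fails the dominance inequalities, then $(\mu+\rho,\beta) = 0$ for some even root $\beta$, so $\langle \mu+\rho, \check\beta\rangle = 0$ and Lemma \ref{symmetry}(a) forces $\mathcal E(\mu) = 0$. So only dominant $\mu$ in the same block as $\lambda$ matter. Since tensoring with $E$ adds $\pm\varepsilon_j$ or $\pm\delta_k$ (or nothing), the core of $f_\mu$ can only change at the tail position $\frac12$ or possibly move a symbol between position $\frac12$ and position $\frac32$; requiring $\mathrm{core}(f_\mu) = \mathrm{core}(f_\lambda)$ means, essentially, that all the action is confined to the tail position. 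A case check on the tail position of $f_\lambda$ then gives the three cases: when the tail position is empty (no $\times$, no $<$, no $>$ at $\frac12$), the only same-core dominant contribution is $\mu = \lambda$ itself, giving $sw(\mathcal E(\lambda)) = \mathcal E(\lambda)$; when the tail position has $<$ or $>$, any shift at the tail moves or kills the core symbol, so no same-core dominant $\mu$ occurs at all (including $\mu=\lambda$, since the $\mu = \lambda$ term corresponds to the weight $0$ of $E$ appearing with multiplicity, but here one must be careful — see below), giving $sw(\mathcal E(\lambda)) = 0$.

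The subtle point, and what I expect to be the main obstacle, is the case of a $\times$ at the tail position, where the answer is $-\mathcal E(\lambda)$ rather than $+\mathcal E(\lambda)$, and more generally keeping track of signs and of the $\mu = \lambda$ term. The $\mu = \lambda$ summand comes from the zero weight space of $E$; but $E$ restricted to $\mathfrak b$ need not have a trivial subquotient unless the corresponding weight vector is genuinely there, and in the orthosymplectic case the zero weight space of $E$ is spread among the positions according to the chosen Borel. The honest way to handle this is the symmetry relation: when $f_\lambda$ has a $\times$ at the tail position $\frac12$, Lemma \ref{symmetry}(a) applies with the odd isotropic situation replaced by the reflection swapping the two half-integer entries $\tfrac12, -\tfrac12$ in $\lambda+\rho$ — more precisely, $\lambda$ with a cross at $\frac12$ and indicator $(+)$ and the weight $\lambda'$ obtained by flipping $\tfrac12 \mapsto -\tfrac12$ (indicator $(-)$) have $\mathcal E(\lambda') = -\mathcal E(\lambda)$ by (\ref{char}), since they are related by a Weyl reflection in the even root $\varepsilon_{m-s}$ (or the relevant $\varepsilon_i$) with $\varepsilon(w) = -1$; but both have the same underlying tailless data, and the tensoring-with-$E$ computation produces, among the dominant same-core $\mu$, precisely the term corresponding to this flipped weight with coefficient $+1$, which re-expresses as $-\mathcal E(\lambda)$. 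I would set this up by writing $\lambda + \rho = (a_1,\dots, a_{m-1}, \tfrac12 \mid b_1, \dots, b_n)$ in the appropriate normalization, listing the at-most-few candidate $\mu$, discarding the non-dominant ones via Lemma \ref{symmetry}(a), and then applying (\ref{char}) or Lemma \ref{symmetry}(a) once more to identify the surviving $\mathcal E(\mu)$ with $\pm\mathcal E(\lambda)$. The bookkeeping of which weight-diagram moves are allowed under the same-core constraint is routine given figure 1 and the dominance conditions from Section 3, so I would not grind through it, but the sign in the $\times$-case is the one genuinely delicate point and deserves the explicit reflection argument.
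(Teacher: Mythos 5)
Your overall strategy is the paper's: filter $C_\lambda\otimes E$ by one-dimensional $B$-modules, use additivity of the Euler characteristic to get $sw(\mathcal E(\lambda))=\sum_\mu\mathcal E(\mu)$ over the $\mu\in\{\lambda,\lambda\pm\varepsilon_j,\lambda\pm\delta_k\}$ in the block of $\lambda$, and then use the Weyl-reflection symmetry of Lemma \ref{symmetry}(a). But the execution has a genuine error that propagates through two of the three cases. Your blanket claim that ``a non-dominant $\mu$ contributes zero because $(\mu+\rho,\beta)=0$ for some even root $\beta$'' is false precisely for the weights that matter here: when $a_m=\tfrac12$ the weight $\mu=\lambda-\varepsilon_m$ satisfies $\mu+\rho=s_{\varepsilon_m}(\lambda+\rho)$, so $(\mu+\rho,\beta)\neq 0$ for all even $\beta$ and $\mathcal E(\mu)=-\mathcal E(\lambda)\neq 0$; similarly $\mathcal E(\lambda-\delta_n)=-\mathcal E(\lambda)$ when $b_n=\tfrac12$ (reflection in $2\delta_n$). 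These nonzero contributions from ``reflected'' weights are the entire content of the lemma. Consequently your argument for the $<$ or $>$ case is wrong as stated: the term $\mu=\lambda$ certainly does occur (the zero weight space of $E$ is one-dimensional, so $C_\lambda$ appears exactly once in the filtration), and the vanishing comes from the cancellation $\mathcal E(\lambda)+\mathcal E(\lambda-\varepsilon_m)=0$ (resp.\ $\mathcal E(\lambda)+\mathcal E(\lambda-\delta_n)=0$), not from the absence of same-core dominant $\mu$.

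In the $\times$ case your bookkeeping is also incomplete: a cross at the tail means $a_m=b_n=\tfrac12$ simultaneously, so \emph{both} $\lambda-\varepsilon_m$ and $\lambda-\delta_n$ stay in the block, and the sum is $\mathcal E(\lambda)+\mathcal E(\lambda-\varepsilon_m)+\mathcal E(\lambda-\delta_n)=(1-1-1)\,\mathcal E(\lambda)=-\mathcal E(\lambda)$. Your description, which keeps only one ``flipped'' term with coefficient $+1$, would give either $0$ or $-\mathcal E(\lambda)$ depending on how the hedged $\mu=\lambda$ term is resolved, and you never resolve it. The fix is simple and is what the paper does: keep all candidate $\mu$ in the block (dominant or not), and evaluate each $\mathcal E(\mu)$ via Lemma \ref{symmetry}(a); the three answers $1$, $-1$, $0$ then fall out of the counts $1$, $1-1-1$, and $1-1$ respectively.
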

\begin{pf} - We use the same idea as in the proof of Lemma \ref{Lemmaa}.
As follows from Corollary 1 in \cite{VeraCaroI}
$$sw(\mathcal E(\lambda))=\sum_{\mu} \mathcal E(\mu),$$
where the summation is over all 
$\mu=\lambda\pm\varepsilon_j,\lambda,\lambda\pm\delta_k$
such that $\gamma(\mu)=\gamma(\lambda)$.

If $f_\lambda$ has empty tail position, then the only weight
$\mu$ appearing in the sum is $\lambda$.

If  $f_\lambda$ has $>$ at the tail position, then the weights $\mu$ appearing in
sum are $\lambda$ and
$\lambda-\varepsilon_m$. But $\lambda-\varepsilon_m=s(\lambda+\rho)-\rho$ where
$s$ is the reflection with respect to the root $\varepsilon_m$. Hence
by (\ref{char})
$\mathcal E(\lambda-\varepsilon_m)=-\mathcal E(\lambda)$
and $sw(\mathcal E(\lambda))=0$. 

Similarly, if $f_\lambda$ has $<$ at the tail position,
$$sw(\mathcal E(\lambda))=\mathcal E(\lambda)+\mathcal E(\lambda-\delta_n)=0.$$

Finally, if  $f_\lambda$ has $\times$ at the tail position, then
$$sw(\mathcal E(\lambda))=\mathcal E(\lambda)+\mathcal E(\lambda-\varepsilon_m)+\mathcal E(\lambda-\delta_n)=-\mathcal E(\lambda).$$
\end{pf}

\section {Translation functors action on simple and projective modules}

In this section $G=SOSP(2m+1,2n)$.

\subsection{Functors $T_{a,a+1}$ and $T_{a+1,a}$}

\begin{leme}\label{ext} Let $L_\lambda,L_\nu\in\mathcal F$,
  $\nu=\lambda-\alpha$ for some isotropic positive root
  $\alpha=\varepsilon_i+\delta_j$ such that 
  $(\lambda+\rho,\alpha)=0$.
Then 
$$[\Gamma_0(G/B,C_{\lambda}):L_{\lambda}]=1,\;\;[\Gamma_0(G/B,C_{\lambda}):L_{\nu}]\geq 1.$$
\end{leme}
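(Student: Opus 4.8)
The plan is to extract both multiplicity statements from the general machinery of Section 2, specialized to the isotropic atypicality condition $(\lambda+\rho,\alpha)=0$. First I would handle $[\Gamma_0(G/B,C_\lambda):L_\lambda]=1$. By Corollary \ref{kostant} this multiplicity equals $\dim H^0(\goth n, P_\lambda\otimes C_{-\lambda})^\goth h$, which is the dimension of the $\goth b$-invariants, i.e. the multiplicity of the lowest weight $-\lambda$ as a $\goth b$-highest weight in $P_\lambda^*$, equivalently the dimension of $\operatorname{Hom}_\goth g(P_\lambda,\Gamma_0(G/B,C_\lambda))$. Since $\lambda$ is dominant and $\lambda+\rho\in\Lambda^+$ is forced up to a Weyl shift (one should check $\lambda$ is dominant; using Lemma \ref{symmetry}(a) and the hypothesis we may assume $\lambda+\rho\in\Lambda^+$, so $\mathcal E(\lambda)\neq 0$), the weight $\lambda$ appears in $\Gamma_0(G/B,C_\lambda)$ with multiplicity exactly $1$: this follows from the character formula (\ref{char}), which gives leading term $e^\lambda$ with coefficient $1$ after multiplying $D$ by the Weyl numerator, together with the fact that $\lambda$ is the highest weight appearing and all other weights are strictly lower. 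Hence $[\Gamma_0(G/B,C_\lambda):L_\lambda]=1$ and no other $L_\mu$ with $\mu\not\leq\lambda$ occurs.

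For the second inequality I would use the adjointness between $P_\lambda$ and the geometric induction together with the Lie superalgebra cohomology computation. By Corollary \ref{kostant} again, $[\Gamma_0(G/B,C_\lambda):L_\nu]=\dim H^0(\goth n,P_\nu\otimes C_{-\lambda})^\goth h$. The key point is to exhibit a nonzero such invariant. Because $\alpha=\varepsilon_i+\delta_j$ is an isotropic root with $(\lambda+\rho,\alpha)=0$, the weight $\nu=\lambda-\alpha$ lies in the same block as $\lambda$ (same central character, by linkage), and $P_\nu$ has a weight vector of weight $\lambda$: indeed $L_\nu$ is a subquotient of the Verma module $M_\lambda$ (the standard atypicality-one linkage $L_\nu\hookrightarrow$ the maximal submodule of $M_\lambda$ / equivalently $[M_\lambda:L_\nu]\geq 1$), hence $L_\lambda$ is a composition factor of a Verma module $M_\nu'$-type object or, more to the point, $P_\nu\twoheadrightarrow L_\nu$ and $P_\nu$ surjects onto the projective cover context so that $\lambda$ occurs as a weight of $P_\nu$ with the one-dimensional odd root vector $\goth g_{-\alpha}$ producing a $\goth b$-singular vector of weight $\lambda$ in $P_\nu^*$. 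Concretely, I would argue that $\operatorname{Hom}_\goth g(P_\nu,\Gamma_0(G/B,C_\lambda))\neq 0$: since $[\Gamma_0(G/B,C_\lambda)]=\mathcal E(\lambda)$ in $\mathcal K(\mathcal F)$ modulo higher cohomology, and $b(\lambda,\nu)=[\mathcal E(\lambda):L_\nu]$ can be computed from Corollary \ref{kostcor} as the constant term of $D^{-1}e^{-\lambda-\rho}Ch(P_\nu)$, it suffices to see this constant term is $\geq 1$. That reduces to knowing $Ch(P_\nu)$ contains $e^\lambda$ with coefficient at least that of the corresponding term in $D e^{\lambda+\rho}$, which holds because $L_\nu$ has $L_\lambda$ in its "first layer" — precisely, $\lambda>\nu$ with $\lambda-\nu=\alpha$ a single odd root, so $P_\nu$ has $\lambda$ as a weight coming from $\goth g_{-\alpha}\otimes(\text{highest weight line})$.

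A cleaner route for the second part, which I would actually prefer, is: vanishing of higher cohomology is not needed here, only $\Gamma_0$. Using Proposition \ref{bott} with $V=C_\lambda$ and $i=0$, $[\Gamma_0(G/B,C_\lambda):L_\nu]^\star$-type identities reduce everything to $H^0(\goth n,-)^\goth h$, i.e. to counting $\goth b$-highest weight vectors of weight $\lambda$ in $P_\nu$. Since $\nu=\lambda-\alpha$ and $\alpha$ is odd isotropic with $(\lambda+\rho,\alpha)=0$, the odd reflection / typicality analysis of \cite{Krep} shows $L_\lambda$ is in the image of the natural map from a Verma-type module built on $\nu$, so $P_\nu$ — being its projective cover and surjecting onto enough of the linked structure — necessarily has a weight vector of weight $\lambda$ annihilated by $\goth n$ (the odd generator $\goth g_\alpha$ kills it since $\alpha+\lambda\notin$ weights forced by the atypicality). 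This produces the required nonzero element of $H^0(\goth n,P_\nu\otimes C_{-\lambda})^\goth h$, giving $[\Gamma_0(G/B,C_\lambda):L_\nu]\geq 1$. The main obstacle I anticipate is the bookkeeping needed to verify that $\lambda$ really is dominant (so that $\Gamma_0$, not some higher $\Gamma_i$, carries the multiplicity) and that the candidate singular vector of weight $\lambda$ in $P_\nu$ is genuinely $\goth n$-invariant rather than merely $\goth h$-semiinvariant; this is where one must use $(\lambda+\rho,\alpha)=0$ essentially, via the explicit $\goth{sl}(1,1)$-subalgebra attached to the isotropic root $\alpha$.
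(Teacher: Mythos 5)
The first equality is fine in substance, but your route through the character formula (\ref{char}) proves less than you claim: (\ref{char}) computes the character of the alternating sum $\sum_i(-1)^i[\Gamma_i(G/B,C_\lambda)]$, so the coefficient $1$ of $e^\lambda$ there does not by itself pin down $\dim\Gamma_0(G/B,C_\lambda)_\lambda$ unless you separately rule out weight $\lambda$ in the higher cohomology. The clean statement you need (and the one the paper invokes, Lemma 2 of \cite{VeraCaroI}) is that $\Gamma_0(G/B,C_\lambda)$ is the maximal finite-dimensional quotient of the Verma module $M_\lambda$; then its $\lambda$-weight space is one-dimensional and cyclic, and $[\Gamma_0(G/B,C_\lambda):L_\lambda]=1$ follows at once. (Dominance of $\lambda$ is not something to be checked: it is part of the hypothesis $L_\lambda\in\mathcal F$.)

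The genuine gap is in the second inequality. What must be produced is a nonzero element of $\operatorname{Hom}_{\goth g}(P_\nu,\Gamma_0(G/B,C_\lambda))$, equivalently an $\goth n$-invariant vector of weight $\lambda$ in $P_\nu$, and your justification is the assertion that $P_\nu$ ``necessarily has a weight vector of weight $\lambda$ annihilated by $\goth n$'' because of the atypicality $(\lambda+\rho,\alpha)=0$. But what the analysis of \cite{Krep} actually gives is a singular vector of weight $\nu=\lambda-\alpha$ in the Verma module $M_\lambda$, i.e.\ $[M_\lambda:L_\nu]\geq 1$ --- and the entire difficulty of the lemma is that this composition factor could a priori be killed when one passes to the maximal finite-dimensional quotient $\Gamma_0(G/B,C_\lambda)$ of $M_\lambda$. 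Nothing in your argument addresses this survival; the appeals to ``the linked structure'' and to $P_\nu$ ``surjecting onto enough of'' it are restatements of the desired conclusion, not proofs of it. The paper closes this gap with a genuinely different input: Propositions 2 and 5 of \cite{VeraCaroI} produce a parabolic $P\supset B$ with $[\Gamma_0(G/P,L_\lambda(P)):L_\nu]=1$, and Lemma 2 of \cite{VeraCaroI} gives a surjection $\Gamma_0(G/B,C_\lambda)\to\Gamma_0(G/P,L_\lambda(P))$, whence the multiplicity in the source is at least $1$. Some substitute for this parabolic-induction step (or a direct verification that the weight-$\nu$ singular vector of $M_\lambda$ survives in the maximal finite-dimensional quotient) is required; without it the proposal does not close.
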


\begin{pf} - By Lemma 2 in \cite{VeraCaroI} $\Gamma_0(G/B,C_{\lambda})$ is
  the maximal finite dimensional quotient of the Verma module with
  highest weight $\lambda$. Hence
  $[\Gamma_0(G/B,C_{\lambda}):L_{\lambda}]=1$.
Propositions 2 and 5 in \cite{VeraCaroI} imply that for some parabolic
subgroup $P$ containing $B$ we have
$[\Gamma_0(G/P,L_{\lambda}(P)):L_{\nu}]=1$, where $L_{\lambda}(P)$ is
a simple $P$-module with highest weight $\lambda$. By Lemma 2  in
\cite{VeraCaroI} there is a surjective homomorphism
     $$\Gamma_0(G/B,C_{\lambda})\to  \Gamma_0(G/P,L_{\lambda}(P)).$$
 Hence the statement.
\end{pf}

\begin{leme}\label{lemalpha} - Let $\chi$ and $\theta$ be two distinct central 
characters such that $at(\chi)\geq at(\theta)$. Denote by $T$ the translation 
functor which maps $\mathcal F _{\chi}$ to $\mathcal F _{\theta}$. Then, if 
some dominant weight $\lambda$ (resp. $\lambda _1$, $\lambda _2$) has central
character $\chi$, then $T(L_{\lambda})$ is either a simple module of 
$\mathcal F _{\theta}$ or zero. 

Moreover, if $T(L_{\lambda _1}) = T(L_{\lambda _2})\neq 0$, then 
$\lambda _1 =\lambda _2$.    
\end{leme}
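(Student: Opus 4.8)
The plan is to use the translation functor $T$ together with its adjoint $T'$ (the translation functor going back from $\mathcal F_\theta$ to $\mathcal F_\chi$, which by the construction in Section~\ref{cat} is realized as one of $T_{a,a+1}$, $T_{a+1,a}$ and is adjoint to $T$, see~(\ref{adjoint})), and to exploit the fact that translation functors are exact, send projectives to projectives, and act on the Euler characteristics $\mathcal E(\mu)$ in the combinatorially transparent way described by the weight-diagram tables (Lemma~\ref{Lemmaa} and Figure~1). First I would record the general soft fact: since $T$ is exact, $T(L_\lambda)$ has a well-defined class in $\mathcal K(\mathcal F_\theta)$, and since $T$ sends projectives to projectives, $T(P_\lambda)$ is a projective object of $\mathcal F_\theta$; dually the adjoint $T'$ sends $P_\mu$ ($\mu$ with central character $\theta$) to a projective of $\mathcal F_\chi$. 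The adjunction then gives, for any dominant $\mu$ with central character $\theta$,
$$[T(L_\lambda):L_\mu]=\dim\operatorname{Hom}_{\goth g}(P_\mu,T(L_\lambda))=\dim\operatorname{Hom}_{\goth g}(T'(P_\mu),L_\lambda)=[T'(P_\mu):L_\lambda].$$
So the problem is reduced to showing: $T'(P_\mu)$ contains $L_\lambda$ with multiplicity $\leq 1$ for each dominant $\lambda$ of central character $\chi$, and that there is at most one such $\mu$ for which this multiplicity is nonzero.

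Next I would translate this into the language of Euler characteristics. Since the $\mathcal E(\mu)$ span $\mathcal K(\mathcal E)$ and $T'$ preserves $\mathcal K(\mathcal E)$ and acts on it via the explicit operator $\varphi^{-1}\circ E_{a,a+1}\circ\varphi$ (or $E_{a+1,a}$), the action of $T'$ on the subgroup $\mathcal K(P)$ generated by the $[P_\mu]$ is controlled once we know $[P_\mu]$ as a combination of $\mathcal E(\nu)$'s. The key point is the hypothesis $at(\chi)\ge at(\theta)$: moving from $\theta$ to $\chi$ one passes from a diagram with fewer crosses to a diagram with more crosses, so in the relevant positions $a,a+1$ of Figure~1 the only transitions that occur for $T'$ going in the $at$-increasing direction are the "splitting" ones such as $> \; < \mapsto \circ\,\times \oplus \times\,\circ$, $\circ\,\times \mapsto <\,>$, $\times\,\circ \mapsto <\,>$, etc., each producing each new diagram exactly once and with coefficient $1$. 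Feeding this through the BGG reciprocity Theorem~\ref{BGG} (or Brundan's combinatorial description via cap diagrams when $G=GL$, here its $SOSP$ analogue) one gets that $T'(P_\mu)$ is again a sum of distinct $\mathcal E(\nu)$'s with coefficients $1$, and — using that $[P_\nu:L_\lambda]\le 1$ cannot fail here because $L_\lambda$ occurs as the head exactly once in the unique $\mathcal E$ whose leading term is $x_\lambda$ — one concludes $[T'(P_\mu):L_\lambda]\le 1$.

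For the "moreover" part: if $T(L_{\lambda_1})=T(L_{\lambda_2})\ne 0$, apply the adjoint $T'$ and compare leading terms. Because $at(\chi)\ge at(\theta)$, the functor $T$ going $\chi\to\theta$ is (at the level of diagrams) an injection on the set of crosses-positions it acts on: in Figure~1 no two distinct dominant tailless diagrams at positions $a,a+1$ are sent to the same diagram of $\mathcal F_\theta$ unless they were already equal. Concretely, $\varphi(\mathcal E(\lambda_i))=x_{\lambda_i}$ and $E_{a,a+1}x_{\lambda_1}=E_{a,a+1}x_{\lambda_2}$; since $E_{a,a+1}$ acts on the wedge space $\Lambda^m(V^*)\otimes\Lambda^n(V)$ by moving a single index, and in the $at$-increasing direction this map is injective on the set of basis vectors $x_\lambda$ with $\gamma(\lambda)=\gamma$, we get $x_{\lambda_1}=x_{\lambda_2}$, hence $\lambda_1=\lambda_2$. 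The main obstacle is the first part — establishing the multiplicity bound $[T(L_\lambda):L_\mu]\le 1$ — since it is exactly here that one must use the hypothesis $at(\chi)\ge at(\theta)$ to rule out the Figure~1 transitions (like $>\;<$ in the $at$-\emph{decreasing} direction) that would merge two $\mathcal E$'s and could a priori produce multiplicity $2$; I would handle this by a careful case analysis on the two relevant diagram positions, invoking Lemma~\ref{ext} to pin down the multiplicities in the one delicate case where an isotropic reflection relates $\lambda$ and $\lambda-\alpha$.
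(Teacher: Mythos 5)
Your proposal does not follow the paper's route and, as written, has two serious problems. First, it is circular: the engine of your argument is the explicit expansion of $[P_\mu]$ (and hence of $[T'(P_\mu)]$) as a sum of distinct $\mathcal E(\nu)$'s with coefficients $0$ or $1$ governed by cap diagrams. In this paper that expansion (Theorem \ref{lem1} and its refinements) is \emph{derived} in the later sections from Lemma \ref{lemalpha} itself, via Lemmas \ref{lembeta}, \ref{lemdelta}, \ref{lemgamma}; Theorem \ref{BGG} alone only gives the abstract equality $a(\lambda,\mu)=b(\mu,\lambda)$, not the values of the coefficients. Relatedly, your chain $\dim\operatorname{Hom}_{\goth g}(T'(P_\mu),L_\lambda)=[T'(P_\mu):L_\lambda]$ is incorrect: for a projective $Q$, $\dim\operatorname{Hom}(Q,L_\lambda)$ is the multiplicity of $P_\lambda$ as a direct summand of $Q$ (equivalently of $L_\lambda$ in the head of $Q$), not the composition multiplicity; and extracting that summand multiplicity from a Grothendieck-group identity written in the $\mathcal E$-basis again requires the very $P$-versus-$\mathcal E$ decomposition you are assuming. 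Second, the injectivity you invoke for the ``moreover'' part is false: under a translation functor in the atypicality-decreasing direction the diagrams $\circ\,\times$ and $\times\,\circ$ at positions $a,a+1$ are both sent to $<\,>$ (see Figure 1), so $E_{a,a+1}$ is not injective on the relevant basis vectors $x_\lambda$; moreover $T(L_{\lambda_1})=T(L_{\lambda_2})$ yields no identity between $E_{a,a+1}x_{\lambda_1}$ and $E_{a,a+1}x_{\lambda_2}$, because $\varphi$ intertwines $T$ with $E_{a,a+1}$ on the $\mathcal E$-basis, not on the basis of classes of simple modules.

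The paper's proof avoids all of this and is more elementary. For the first claim it observes that every $\goth b$-singular vector of $L_\lambda\otimes E$ has weight among $\lambda,\ \lambda\pm\varepsilon_j,\ \lambda\pm\delta_k$, and that under the hypotheses $\chi\neq\theta$ and $at(\chi)\ge at(\theta)$ at most one of these weights has central character $\theta$ (the only configuration in which two of them share a central character is $>\,<\mapsto\{\circ\,\times,\ \times\,\circ\}$, which increases atypicality and is therefore excluded). For the ``moreover'' it splits into the case where $\lambda_1-\lambda_2$ is a multiple of an even root $\beta$ (then $\lambda_1+\rho=s_\beta(\lambda_2+\rho)$, and dominance of both forces $\lambda_1=\lambda_2$) and the case where $\lambda_1-\lambda_2$ is an isotropic root, which is handled by Lemma \ref{ext} combined with $T(\Gamma_0(G/B,C_{\lambda_1}))=\Gamma_0(G/B,C_\mu)$ and $[\Gamma_0(G/B,C_\mu):L_\mu]=1$. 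You correctly sensed that Lemma \ref{ext} and the isotropic-root configuration are where the real work lies, but the framework you propose for reaching that point does not stand.
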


\begin{pf} - All $\goth b$-singular vectors in $L_{\lambda} \otimes E$
have weights of the form 
  $\lambda\pm\varepsilon_j, \lambda, \lambda\pm\delta_k$. At 
most one of those has the central
  character 
$\theta$, as 
one can figure out writing the weight diagrams. Hence the first 
statement. 

Assume now that $T(L_{\lambda _1}) = T(L_{\lambda _2})=L_\mu$, then $\lambda _1
-\lambda _2$ is an isotropic root, a multiple of an even root, or zero. 

Case 1 : $\lambda _1 - \lambda _2$ is a multiple of an even root say $\beta$ (
one has $\lambda _1 - \lambda _2 =\pm 2\varepsilon_i$,
$\varepsilon_i-\varepsilon_j$,$\pm 2\delta_i$ or $\delta_i-\delta_j$)
and then $\lambda _1 +\rho = s_{\beta}(\lambda _2 +\rho)$, 
where  $s_{\beta}$ is the reflection with respect to the root $\beta$. Hence 
$\lambda_1$ and $\lambda_2$ cannot be both dominant except if $(\lambda _1 + \rho , \beta) = 0$,
in which case $\lambda _1 = \lambda _2$.

Case 2 : $\lambda _1 - \lambda _2$ is an isotropic root $\alpha=\pm(\varepsilon_i+\delta_j)$, which
we can assume to be a positive root. As follows from Lemma \ref{ext}
$$[\Gamma_0(G/B,C_{\lambda_1}):L_{\lambda_1}]=1\;\;[\Gamma_0(G/B,C_{\lambda_1}):L_{\lambda_2}]\geq 1.$$
By Corollary 1 in \cite{VeraCaroI} (see also (\ref{tens}))
$$T(\Gamma_0(G/B,C_{\lambda_1}))=\Gamma_0(G/B,C_{\mu}).$$
As $[\Gamma_0(G/B,C_{\mu}):L_\mu]=1$, either $T(L_{\lambda_1})$ or
$T(L_{\lambda_2})$ is zero.
\end{pf}

\begin{leme}\label{lembeta} - Let $\chi$ and $\theta$ be two distinct
central characters such that $at(\chi) \leq at(\theta)$. Then the translation 
functor $T$ which maps $\mathcal F _{\chi}$ to $\mathcal F _{\theta}$ maps
projective indecomposable modules (PIMs for short) to PIMs or to zero.
\end{leme}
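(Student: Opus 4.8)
The plan is to exploit the adjunction between $T$ and its partner functor $T'$ (the translation functor going the other way, from $\mathcal F_\theta$ to $\mathcal F_\chi$), together with the fact that projectivity of $P_\lambda$ makes $\operatorname{Hom}_{\goth g}(P_\lambda,-)$ exact and that translation functors are exact and biadjoint. First I would recall that $T$ sends projectives to projectives: since $T(M)=(M\otimes E)_\theta$ and tensoring a projective with a finite-dimensional module yields a projective (as $-\otimes E$ is exact and has exact right adjoint $-\otimes E^*$), the projection onto a block of a projective is again projective. So $T(P_\lambda)$ is projective; the issue is indecomposability, i.e.\ showing $T(P_\lambda)$ is either $0$ or has a single indecomposable summand, hence equals some $P_\mu$ or $0$.

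Next I would argue as follows. A projective module in $\mathcal F_\theta$ decomposes as a direct sum of $P_\mu$'s, and $\dim\operatorname{Hom}_{\goth g}(T(P_\lambda),P_\mu)$ counts (with multiplicity) the number of summands isomorphic to $P_\mu$ when combined with the statement that $P_\mu$ has simple head $L_\mu$; more precisely $[T(P_\lambda):P_\mu]$ equals $\dim\operatorname{Hom}_{\goth g}(T(P_\lambda),I_\mu)$ where $I_\mu$ is the injective hull of $L_\mu$, but in our category projectives are injectives (or one can use $[\operatorname{head}T(P_\lambda):L_\mu]$ directly). The key computation uses adjunction (\ref{adjoint}): $\operatorname{Hom}_{\goth g}(T(P_\lambda), L_\mu)=\operatorname{Hom}_{\goth g}(P_\lambda, T'(L_\mu))$. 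By Lemma \ref{lemalpha}, applied in the direction $\mathcal F_\theta\to\mathcal F_\chi$ (which is legitimate precisely because $at(\theta)\geq at(\chi)$, matching the hypothesis $at(\chi)\le at(\theta)$), the module $T'(L_\mu)$ is either simple or zero. Therefore $\operatorname{Hom}_{\goth g}(P_\lambda,T'(L_\mu))$ has dimension at most $1$: it is $1$ if $T'(L_\mu)\cong L_\lambda$ and $0$ otherwise. Since by the second part of Lemma \ref{lemalpha} the map $\mu\mapsto T'(L_\mu)$ is injective on the set where it is nonzero, there is at most one $\mu$ with $T'(L_\mu)\cong L_\lambda$. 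Hence $\operatorname{head} T(P_\lambda)$ is either $0$ or a single copy of one $L_\mu$, so $T(P_\lambda)$, being projective, is either $0$ or $P_\mu$.

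I would then write this up cleanly: let $P=T(P_\lambda)$, which is projective; write $P=\bigoplus_\mu P_\mu^{\oplus c_\mu}$; observe $c_\mu=\dim\operatorname{Hom}_{\goth g}(P,L_\mu)=\dim\operatorname{Hom}_{\goth g}(P_\lambda,T'(L_\mu))\le 1$ with equality iff $T'(L_\mu)\cong L_\lambda$; invoke injectivity of $\mu\mapsto T'(L_\mu)$ from Lemma \ref{lemalpha} to conclude $\sum_\mu c_\mu\le 1$; hence $P=0$ or $P=P_\mu$ for the unique such $\mu$.

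The main obstacle I anticipate is making rigorous the passage from "$\operatorname{head}$ has at most one simple constituent, once" to "$T(P_\lambda)$ is indecomposable with that projective cover": one must be careful that $\operatorname{Hom}_{\goth g}(P,L_\mu)$ genuinely computes the multiplicity of $P_\mu$ as a summand of a projective $P$, which relies on $\mathcal F$ having a well-behaved block structure with $P_\mu$ the projective cover of $L_\mu$ and $\operatorname{Hom}_{\goth g}(P_\mu,L_\nu)=\delta_{\mu\nu}\Bbbk$. This is standard for categories of finite-dimensional modules over an algebraic supergroup (each block is equivalent to modules over a finite-dimensional algebra), and is implicitly used throughout the paper, so I would cite \cite{Sqr} for it. A secondary point needing care is the correct direction of the atypicality inequality when applying Lemma \ref{lemalpha} to $T'$; one should check that $T'$ indeed maps $\mathcal F_\theta$ to $\mathcal F_\chi$ and that $at(\theta)\ge at(\chi)$ is exactly the hypothesis of that lemma, so no circularity arises.
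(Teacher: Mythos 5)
Your proposal is correct and follows essentially the same route as the paper: observe that $T(P_\lambda)$ is projective, use the adjunction $\operatorname{Hom}_{\goth g}(T(P_\lambda),L_\mu)=\operatorname{Hom}_{\goth g}(P_\lambda,T^*(L_\mu))$, and apply Lemma \ref{lemalpha} to the adjoint functor $T^*:\mathcal F_\theta\to\mathcal F_\chi$ (whose atypicality hypothesis matches $at(\chi)\leq at(\theta)$) to see that at most one $\mu$ contributes with multiplicity one. The extra care you take about projectivity of $T(P_\lambda)$ and about $\operatorname{Hom}(P,L_\mu)$ counting indecomposable summands is exactly what the paper leaves implicit.
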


\begin{pf} - Let $\lambda$ be a dominant weight with central character 
$\chi$. Since $T(P_\lambda)$ is a projective module in $\mathcal F
  _{\theta}$,
it is sufficient to show that
$$Hom_\goth g(T(P_\lambda), L_{\mu}) = \mathbb C$$
for at most one dominant weight $\mu$, and is zero otherwise. 

Let us denote by $T^*$ the adjoint functor of $T$, which is the translation 
functor mapping $\mathcal F _{\theta}$ to $\mathcal F _{\chi}$; one has 
$$Hom_{\mathfrak g}(T(P_\lambda), L_{\mu}) = Hom_{\mathfrak g}(P_\lambda,T^*( L_{\mu}))$$
and by the Lemma \ref{lemalpha}, the statement follows.
\end{pf}

\subsection{The action of the switch functor on simple modules and PIMs}

\begin{leme}\label{switch} - Let $\lambda$ be an integral dominant
  weight.

If $f_\lambda$ has the empty tail position, then
$sw(L_\lambda)=L_\lambda$. 

If $f_\lambda$ has $>$ or $<$ at the tail position, then
$sw(L_\lambda)=0$. 

If $f_\lambda$ has several $\times$ at the tail position and no core symbols, then
$sw(L_\lambda)=L_\mu$, where $f_\mu$ is obtained
from $f_\lambda$ by  change of sign. 

The action of the switch functor on PIMs is given by the same rule.

\end{leme}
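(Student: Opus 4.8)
The plan is to analyze the $\goth b$-singular vectors in $L_\lambda\otimes E$, exactly as in the proof of Lemma \ref{lemalpha}, but now keeping track of the \emph{block} $\mathcal F_\gamma$ rather than passing to a different block. All such singular vectors have weights among $\lambda,\ \lambda\pm\varepsilon_j,\ \lambda\pm\delta_k$; the switch functor $sw$ projects $L_\lambda\otimes E$ onto the summand with central character $\chi_\lambda$, i.e. onto the span of those weight spaces whose weight has the same core as $\lambda$. First I would note that a weight $\lambda\pm\varepsilon_j$ or $\lambda\pm\delta_k$ has the same core as $\lambda$ precisely when the corresponding elementary move on $f_\lambda$ only affects the tail position (since any move elsewhere changes a core symbol $<,>$, hence changes the core). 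So the whole question is local at the tail position, and the three cases of the lemma correspond to the tail position being empty, carrying a core symbol, or carrying some $\times$'s.

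\emph{Empty tail position.} Here no move $\lambda\to\lambda\pm\varepsilon_j$ or $\lambda\to\lambda\pm\delta_k$ stays inside the block except the trivial one, so $sw(L_\lambda)\cong L_\lambda\otimes C$ for a one-dimensional piece; more carefully, $L_\lambda\otimes E$ restricted to the block $\mathcal F_{\chi_\lambda}$ contains $L_\lambda$ with multiplicity one (the vector of weight $\lambda$ in $L_\lambda\otimes e_0$ for $e_0$ the trivial-weight basis vector of $E$, if present; otherwise one argues via characters that $Ch\, sw(L_\lambda)=Ch\,L_\lambda$ using Lemma \ref{propalpha} and downward induction on the standard order), so $sw(L_\lambda)=L_\lambda$.

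\emph{Core symbol at the tail position.} I would argue by induction on the degree of atypicality, or more directly by comparing characters. Since $sw$ is exact and every module in $\mathcal F_{\chi_\lambda}$ has a (virtual) decomposition into Euler characteristics via Theorem \ref{BGG}, it suffices to know $sw$ on each $\mathcal E(\mu)$ appearing in $[L_\lambda]$; but all such $\mu$ have the same core as $\lambda$, hence also a core symbol at the tail position, so Lemma \ref{propalpha} gives $sw(\mathcal E(\mu))=0$ for every one of them. Hence $sw([L_\lambda])=0$ in $\mathcal K(\mathcal F)$, and since $sw(L_\lambda)$ is an honest module (not merely virtual), $sw(L_\lambda)=0$.

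\emph{Several $\times$'s and no core symbols at the tail position.} This is the main case. Write $[L_\lambda]=\sum_\mu b(\mu,\lambda)\,\mathcal E(\mu)^{\vee}$ — more precisely invert the unitriangular relation $Ch\,L_\nu = e^\nu + \sum_{\mu<\nu}(\cdots)$ together with (\ref{char}) to express $[L_\lambda]$ as an integral combination of Euler characteristics $\mathcal E(\mu)$ with $\mu+\rho\in\Lambda^+$, all with the same core as $\lambda$ and hence with a nonempty block of $\times$'s at the tail position and no core symbol there. By Lemma \ref{propalpha}, $sw$ acts as $-1$ on every such $\mathcal E(\mu)$, so $sw([L_\lambda])=-[L_\lambda]$... which is \emph{not} what we want, so the naive character argument must be refined: the point is that the $\goth{osp}(2m+1,2n)$-module $L_\lambda$ and the module $L_\mu$ obtained by the sign change have the same character up to the indicator, and in $\mathcal K(\mathcal F)$ one must distinguish them by the indicator. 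The correct statement is that $sw$ interchanges the two weights sharing a weight diagram (the $(+)$ and $(-)$ indicator versions), and for $\lambda$ dominant \emph{and tailless} the $(+)$ version is $\lambda$ itself while $sw(\lambda)$ is the $(-)$ version $\mu$. So I would instead identify the unique $\goth b$-singular vector in $L_\lambda\otimes E$ lying in the block $\mathcal F_{\chi_\lambda}$ and not of weight $\lambda$: it has weight $\lambda-\varepsilon_m$, generates a quotient isomorphic to $L_\mu$, and a dimension/character count (the characters of $L_\lambda$ and $L_\mu$ agree, so $sw(L_\lambda)$, having the same character as $L_\lambda$ by Lemma \ref{propalpha} applied with care to the indicator-free character, must be exactly $L_\mu$) pins down $sw(L_\lambda)=L_\mu$.

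\emph{PIMs.} For the last sentence I would use adjunction: $sw$ is self-adjoint (it is $T$ and $T^*$ for the same pair of blocks, cf. (\ref{adjoint})), so $Hom_\goth g(sw(P_\lambda),L_\nu)=Hom_\goth g(P_\lambda,sw(L_\nu))$, and the already-established action of $sw$ on simple modules shows this is $\mathbb C$ for exactly one $\nu$ (namely $\nu=\lambda$ in the empty-tail case, $\nu=\mu$ in the $\times$-tail case) and $0$ otherwise, so the projective module $sw(P_\lambda)$ — or zero, in the core-symbol case — is the corresponding PIM by the same reasoning as in Lemma \ref{lembeta}.

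\emph{Main obstacle.} The delicate point is the third case: the character $Ch$ does \emph{not} separate $L_\lambda$ from $L_\mu$ (they differ only by the indicator on the weight diagram), so one cannot conclude from a character computation alone, and the computation via Euler characteristics gives $sw([L_\lambda])=-[L_\lambda]$ which looks contradictory until one realizes that $[L_\mu]=-[L_\lambda]$ is \emph{false} — rather $Ch\,L_\mu=Ch\,L_\lambda$ but the classes are forced to be distinguished because the two dominant weights are genuinely different. The resolution is to track the indicator carefully: I expect the honest argument to go through the explicit highest-weight vector of weight $\lambda-\varepsilon_m$ in $L_\lambda\otimes E$ and to verify it is not killed by the projection to $\mathcal F_{\chi_\lambda}$, which is where the bulk of the (non-routine) work lies.
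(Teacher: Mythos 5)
There are genuine gaps in the second and third cases. For the core-symbol case you expand $[L_\lambda]$ as an integral combination of Euler characteristics and apply Lemma \ref{propalpha} termwise; but the subgroup $\mathcal K(\mathcal E)$ generated by the $\mathcal E(\mu)$ is a \emph{proper} subgroup of $\mathcal K(\mathcal F)$ (as the introduction stresses), so such a finite expansion of $[L_\lambda]$ need not exist, and no infinite (completed) version is established for $SOSP(2m+1,2n)$ either. For the $\times$-tail case your argument ultimately rests on the claim that $Ch\,L_\lambda=Ch\,L_\mu$; this is false: $\lambda$ and $\mu$ are distinct dominant weights (they share a weight diagram, not a highest weight), so $Ch\,L_\lambda=e^\lambda+\cdots$ and $Ch\,L_\mu=e^\mu+\cdots$ are distinct, and indeed $Ch$ is injective on $\mathcal K(\mathcal F)$. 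You also concede that the key verification --- that the singular vector you single out survives the projection to the block and generates all of $sw(L_\lambda)$ --- is left undone, and the character count you propose to close with cannot work for the reason just given.

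The missing idea, which the paper uses for both of these cases, is parabolic induction from a parabolic $\goth q\supset\goth b$ whose semisimple part is a small orthosymplectic subalgebra matching the tail: $\goth{osp}(2k+3,2k)$ or $\goth{osp}(2k+1,2k+2)$ when a core symbol sits at the tail, and $\goth{osp}(2k+1,2k)$ when only $\times$'s do. Writing $L_\lambda$ as a quotient of $S_\lambda=U(\goth g)\otimes_{U(\goth q)}C_\lambda$ and using the tensor identity, one computes $sw(S_\lambda)$ directly: it vanishes in the core-symbol case (hence so does $sw(L_\lambda)$, by exactness), while in the $\times$ case $sw(S_\lambda)=U(\goth g)\otimes_{U(\goth q)}(C_\lambda\otimes E')$ with $E'$ the standard module of the Levi, whose unique simple quotient is $L_\mu$; one then observes that no other simple subquotient of $S_\lambda$ can produce $L_\mu$ under $sw$, and treats the $(+)$ sign by the symmetric relation $sw(S'_\lambda)=S_\lambda$. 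Your treatment of the empty-tail case and of PIMs (via self-adjointness of $sw$) is essentially sound and close to the paper's, which handles the first case via $sw(\Gamma_0(G/B,C_\lambda))=\Gamma_0(G/B,C_\lambda)$ and the last by duality.
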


\begin{pf} - The first assertion easily follows from
$$sw(\Gamma_0(G/B,C_\lambda))=\Gamma_0(G/B,C_\lambda).$$

To prove the second assertion choose the parabolic subalgebra $\goth q\subset\goth g$
with semisimple part $\goth{osp}(2k+3,2k)$ if $f_\lambda$ has
$k$ $\times$ and $>$ at the tail position and with  semisimple part $\goth{osp}(2k+1,2k+2)$ if $f_\lambda$ has
$k$ $\times$ and $<$ at the tail position. Then $C_\lambda$ has a
natural $\goth q$-module structure and $L_\lambda$ is a quotient of the
parabolically induced module $S_\lambda=U(\goth g)\otimes_{U(\goth  q)}C_\lambda$. It is 
easy to check that 
$$sw(S_\lambda)=(U(\goth g)\otimes_{U(\goth  q)}(C_\lambda\otimes E))_{\gamma(\lambda)}=0.$$
Hence $sw(L_\lambda)=0$. 

Now let $f_\lambda$ have $k$ $\times$-s at the tail position and no core
symbols. Let $\goth q$ be the parabolic subalgebra with semisimple
part $\goth{osp}(2k+1,2k)$. If the sign of $f_\lambda$ is $-$ then 
$L_\lambda$ is a quotient of $S_\lambda$. On the other hand,
$$S'_\lambda:=sw(S_\lambda)=U(\goth g)\otimes_{U(\goth  q)}(C_\lambda\otimes E'),$$
where $E'$ is the standard module over semisimple part of $\goth q$. The unique simple quotient of $S'_\lambda$
is isomorphic to $L_\mu$, where $f_\mu$ is obtained from $f_\lambda$
by change of sign. Since the application of the switch functor to
any other simple subquotient of $S_\lambda$ can not produce $L_\mu$ we
have $sw(L_\lambda)=L_\mu$. If the sign of
$f_\lambda $ is $+$, the assertion follows similarly from the fact
that $sw(S'_\lambda)=S_\lambda$.

Finally, the statement about PIMs follows by duality. 
\end{pf}

\subsection{Elementary changes}\label{ec}
We define elementary changes on the weight diagram by the list.

$\bullet$ The change of the sign $(+)$ and $(-)$ in  front of the diagram.

$\bullet$ An elementary change which involves positions $a$ and $a+1$
with $a>\frac{1}{2}$:

a) $\ldots \times \; \circ \ldots \rightarrow \ldots > \; < \ldots$ (decreases the degree of atypicality by 1),

b) $\ldots < \; \circ \ldots \leftrightarrow  \ldots \circ  \; < \ldots$ (doesn't change the degree of atypicality),

c) $\ldots > \; \circ \ldots \leftrightarrow \ldots \circ  \; > \ldots$ (doesn't change the degree of atypicality),

$\bullet$ An elementary change which involves positions $\frac{1}{2}$ and $\frac{3}{2}$

at) $(+)\times^k \; \circ \ldots \rightarrow {}^>_{\times^{k-1}}\; < \ldots$ (decreases the degree of atypicality by 1),

bt) ${}^>_{\times^{k}}\; \circ \ldots \leftrightarrow (-)\times^k  \; > \ldots$ (doesn't change the degree of atypicality),

ct)  ${}^<_{\times^{k}}\; \circ \ldots \leftrightarrow (-)\times^k  \; < \ldots$ (doesn't change the degree of atypicality),

\noi The $\times^k$ sign indicates that there are $k$ $\times$ at the
tail position and $\leftrightarrow$ means that we can go in either
direction.

\begin{leme}\label{lemdelta} - If $(f_{\lambda}, f_{\mu})$ is a pair
  of weight diagrams of the list a),..., ct), then for a suitable
  choice of a translation functor $T$ we have $T(L_{\lambda})= L_{\mu}$ and $T^*(P_{\mu}) = P_{\lambda}$.
\end{leme}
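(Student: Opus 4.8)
The plan is to verify each case of the list a),...,ct) essentially by inspection, using the combinatorial description of translation functors on simple modules and PIMs established in Lemmas \ref{Lemmaa}, \ref{lemalpha}, \ref{lembeta}, \ref{switch}, together with the figure 1 table. First I would observe that every elementary change in the list is realized by one of the operators already studied: the sign change at the front of the diagram is exactly the switch functor $sw$ applied when $f_\lambda$ has $\times$'s and no core symbols at the tail position (Lemma \ref{switch}, third assertion, and Lemma \ref{propalpha}); all other changes involve two adjacent positions and are realized by $T_{a,a+1}$ or $T_{a+1,a}$ for the appropriate $a$. So for each pair $(f_\lambda,f_\mu)$ one must exhibit the correct choice $T\in\{T_{a,a+1},T_{a+1,a},sw\}$ and check the two equalities $T(L_\lambda)=L_\mu$ and $T^*(P_\mu)=P_\lambda$.

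For the claim $T(L_\lambda)=L_\mu$: by Lemma \ref{lemalpha}, whenever $T$ maps $\mathcal F_\chi$ to $\mathcal F_\theta$ with $at(\chi)\geq at(\theta)$, the image $T(L_\lambda)$ is simple or zero; one then reads off from the list of $\goth b$-singular vectors $\lambda\pm\varepsilon_j,\lambda,\lambda\pm\delta_k$ (and from figure 1, which tracks exactly the change in positions $a,a+1$) that the unique singular vector landing in the target block has weight $\mu$, so the image is $L_\mu$ rather than zero — one must separately rule out vanishing, which is where figure 1 together with Lemma \ref{ext} is used. For the sign-change case one invokes the third assertion of Lemma \ref{switch} directly. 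For the claim $T^*(P_\mu)=P_\lambda$: by Lemma \ref{lembeta}, since $T^*$ reverses the atypicality inequality it sends PIMs to PIMs or zero, so $T^*(P_\mu)$ is $P_\nu$ for some $\nu$ or zero; to pin down $\nu=\lambda$ and exclude zero I would use the adjunction $Hom_\goth g(T^*(P_\mu),L_\lambda)=Hom_\goth g(P_\mu,T(L_\lambda))=Hom_\goth g(P_\mu,L_\mu)=\mathbb C$, which forces $T^*(P_\mu)$ to have $L_\lambda$ in its head, hence $T^*(P_\mu)=P_\lambda$. The sign-change case for PIMs follows from the last assertion of Lemma \ref{switch} and self-adjointness of $sw$.

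The main obstacle I expect is bookkeeping rather than a genuine difficulty: one must choose $T$ versus $T^*$ consistently so that $T$ points in the atypicality-decreasing (or equal) direction as required by Lemma \ref{lemalpha} while $T^*$ points the other way as required by Lemma \ref{lembeta}, and in the "$b$", "$c$", "$bt$", "$ct$" cases, where the atypicality is unchanged, one has to argue that $T$ and $T^*$ are mutually inverse equivalences on the relevant blocks (again via figure 1: the relevant entry of the table shows a single $\mu$ appears, with no vanishing), so that both $T(L_\lambda)=L_\mu$ and $T^*(P_\mu)=P_\lambda$ hold with the same choice read in opposite directions. The tail-position cases "$at$", "$bt$", "$ct$" need slightly more care because of the sign indicators, but these are handled by combining the adjacent-position analysis with Lemma \ref{switch}; in particular "$at$" (which decreases atypicality and, after the move, leaves $\times^{k-1}$ with a $>$ at the tail and a $<$ just to the right) is the analogue of case a) and is checked the same way. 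I would organize the write-up as a short case analysis referring back to figure 1 line by line.
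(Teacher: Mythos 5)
Your overall architecture matches the paper's: split off the cases where the atypicality does not change, use Lemma \ref{lemalpha} to see that $T(L_\lambda)$ is simple or zero, and deduce $T^*(P_\mu)=P_\lambda$ from $T(L_\lambda)=L_\mu$ via the adjunction (\ref{adjoint}) together with Lemma \ref{lembeta} --- that last step is exactly the (largely implicit) argument the paper intends. However, the one step carrying real content in cases a) and at), namely that $T(L_\lambda)$ is nonzero and equals $L_\mu$ rather than the image of some lower composition factor, is not established by the tools you invoke. Figure 1 is a computation in the Grothendieck group of Euler characteristics, where cancellation occurs (indeed $\mathcal E(\nu)$ can vanish by Lemma \ref{symmetry}, and the multiplicities $b(\nu,\lambda)$ are alternating sums), so it cannot rule out $T(L_\lambda)=0$; and the fact that $\mu$ is the unique weight of the form $\lambda\pm\varepsilon_i,\ \lambda\pm\delta_j$ in the target block does not by itself produce a singular vector of weight $\mu$ in $L_\lambda\otimes E$, nor is Lemma \ref{ext} (a statement about multiplicities in $\Gamma_0(G/B,C_\lambda)$) the right tool here. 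The paper's actual mechanism is: by Corollary 1 of \cite{VeraCaroI}, $T(\Gamma_0(G/B,C_\lambda))=\Gamma_0(G/B,C_\mu)$; the multiplicity of $L_\mu$ in the right-hand side is $1$, so exactly one composition factor $L_\nu$ of $\Gamma_0(G/B,C_\lambda)$ satisfies $T(L_\nu)=L_\mu$ (uniqueness coming from Lemma \ref{lemalpha}); and a comparison of highest weights excludes every $\nu$ strictly below $\lambda$. Without this, or an equivalent argument, your proof of $T(L_\lambda)=L_\mu$ is incomplete, and the PIM half collapses with it.

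Two smaller points. For the cases b), c), bt), ct) you propose to show that $T$ and $T^*$ are mutually inverse equivalences ``via figure 1''; again, an identity of Euler characteristics cannot yield a categorical equivalence --- the paper simply imports the block equivalence from Section 6 of \cite{VeraCaroI}, and you should do the same rather than attempt to rederive it. Finally, the sign change $(\pm)$ is listed among the elementary changes but is not one of a),\dots,ct); it is handled separately by the switch functor (Lemmas \ref{propalpha} and \ref{switch}), so your discussion of it is harmless but not needed for the statement as written.
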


\begin{pf} - In the cases b),c),bt) and ct), we already know
  (\cite{VeraCaroI} Section 6) that $T$ is an equivalence of the
  corresponding blocks, thus we have nothing to prove.

It remains to prove the statement for a pair 
$(f_{\lambda}, f_{\mu})$ in the cases a)and at). 

We have $\mu = \lambda + \delta _{a+1}$. It is easy to see that among
the weights $\lambda\pm\varepsilon_i,\lambda\pm\delta_j$ only $\mu$ has
weight $\gamma(\mu)$. Therefore by Corollary 1 in \cite{VeraCaroI} we
have
\begin{equation}\label{equaux}
T(\Gamma_0(G/B,C_\lambda))=\Gamma_0(G/B,C_\mu).
\end{equation}
The multiplicity of $L_\lambda$ in  $\Gamma_0(G/B,C_\lambda)$ is 1 as
well as the multiplicity of  $L_\mu$ in  $\Gamma_0(G/B,C_\mu)$. If
$L_\nu$ is a simple subquotient of  $\Gamma_0(G/B,C_\lambda)$ and
$\nu\neq\mu$, then $\nu<\mu$ and by Lemma \ref{lemalpha} $T(L_\nu)\neq L_\mu$.  
Therefore (\ref{equaux}) implies  $T(L_\lambda)= L_\mu$.
\end{pf}

\begin{leme}\label{lemgamma} - Let $\lambda$ be any dominant weight with atypicality degree $k>0$ . Then one can find a dominant typical weight $\mu$ such that
there exist a sequence of weights $\mu = \mu _1, \mu _2, \ldots , \mu_r = \lambda$ and translation functors $T_1, \ldots, T_{r-1}$ such that
$P_{\mu _i} = T_{i-1}(P_{\mu _{i-1}})$, and $at(\mu _i) \leq at(\mu_{i+1})$. 
\end{leme}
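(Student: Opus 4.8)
The plan is to build the chain $\mu = \mu_1, \mu_2, \dots, \mu_r = \lambda$ by \emph{lowering} the atypicality degree step by step, starting from $\lambda$ and ending at a typical weight, and then reading the chain backwards. Concretely, I would argue by downward induction on $k = at(\lambda)$: it suffices to show that from any dominant $\lambda$ with $at(\lambda) = k > 0$ one can reach, by a single translation functor of the type appearing in the \emph{elementary changes} list (Section \ref{ec}), a dominant weight $\lambda'$ with $at(\lambda') = k-1$, in such a way that the corresponding $T^*$ sends $P_{\lambda'}$ to $P_\lambda$. Iterating this $k$ times produces a typical dominant weight $\mu$, and reversing the order of the weights obtained (and correspondingly taking the functors $T_i = T^*$ in reverse) gives exactly the required sequence with $at(\mu_i) \le at(\mu_{i+1})$. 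The relations $P_{\mu_i} = T_{i-1}(P_{\mu_{i-1}})$ are precisely the content of Lemma \ref{lemdelta} applied to the elementary changes used.

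The core combinatorial step is therefore: given the weight diagram $f_\lambda$ with $k \ge 1$ crosses, use moves of type (b), (c), (bt), (ct) — which preserve atypicality and are block equivalences by \cite{VeraCaroI} Section 6 — to slide a $\times$ and the relevant empty positions into a configuration to which a degree-lowering move (a) or (at) applies, producing a diagram with $k-1$ crosses. Since $f_\lambda$ is tailless by assumption (being dominant here means tailless only for $\lambda + \rho \in \Lambda^+$; in general I should be careful and treat the tail crosses via the $(+)/(-)$ sign moves and (at) as in the list), the bookkeeping is: pick the rightmost $\times$, which by the shape of a tailless diagram has an empty position somewhere to its right; use moves (b),(c) to move empty positions leftward until that $\times$ sits immediately to the left of an empty $\circ$, i.e.\ in a local pattern $\times \, \circ$ (or the tail pattern $(+)\times^k\,\circ$), and then apply (a) (resp.\ (at)) to turn it into $>\,<$, decreasing atypicality by one. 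Each such elementary change is covered by Lemma \ref{lemdelta}, which gives both $T(L_\lambda) = L_{\lambda'}$ and $T^*(P_{\lambda'}) = P_\lambda$; the latter is the ingredient that lets us reconstruct the projective chain.

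The main obstacle I anticipate is ensuring that the intermediate weights produced by the equivalence moves are genuinely \emph{dominant} (tailless), so that Lemma \ref{lemdelta} applies at every stage, and that the degree-lowering move can always be positioned correctly — in particular handling the tail position $\frac{1}{2}$ and the sign indicators $(\pm)$ without getting stuck. One must check that the equivalence moves (b),(c),(bt),(ct) suffice to bring an arbitrary tailless diagram into the normal form needed for (a) or (at); this is a finite, purely diagrammatic verification, but it is where the real work lies. A clean way to organize it is to induct on the distance between the chosen $\times$ and the nearest empty position to its right, using (b)/(c) to decrease that distance by one at a time, which reduces the problem to the base case where the pattern is already $\times\,\circ$ and move (a)/(at) finishes. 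Once this is done, downward induction on $k$ closes the argument, with the typical weight $\mu$ obtained after $k$ steps and $\mathcal F_{\chi_\mu}$ semisimple.
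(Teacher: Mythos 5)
Your proposal is correct and follows essentially the same route as the paper: reduce via Lemma \ref{lemdelta} to showing that $f_\lambda$ can be brought to a typical diagram by elementary changes, then induct on the atypicality degree by taking the rightmost $\times$, clearing the position to its right with moves b), c) (resp. bt), ct) and the sign change at the tail), and applying a) (resp. at)). The only cosmetic slip is the phrase ``by a single translation functor,'' which your own subsequent description corrects, since several equivalence moves are generally needed before the degree-lowering step.
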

\begin{pf} - Due to the previous Lemma we have to check that $f_\lambda$
can be transformed to a typical $f_\mu$ by elementary changes.

We prove the statement by induction on degree of atypicality of
$\lambda$. Let $t$ be the position of the rightmost $\times$  in the weight
diagram $f_{\lambda}$. Assume first that $t\neq \frac{1}{2}$. If the
position $t+1$ is empty we 
can use elementary change a) to decrease the atypicality degree
of $\lambda$. If the position $t+1$ is occupied by a core symbol, we
can use elementary changes  of type b) and c) to move all core symbols in positions
$t+1,t+2,\ldots$ to the right. The diagram obtained in this way will have
the position $t+1$ empty and now we can decrease the degree of
atypicality using elementary change a).

Now let $t=\frac{1}{2}$. That means $f_\lambda$ has only core
symbols outside the tail position. Using elementary changes of type b)
and c) we can transform $f_\lambda$ to the diagram that has an empty
position $\frac{3}{2}$. Hence without loss of generality we may assume
that the position $\frac{3}{2}$ in $f_\lambda$ is empty. Now we are
going consider 3 different cases.

If all symbols at the tail positions are $\times$ and the sign is $+$,
we apply elementary change at) to decrease the degree of atypicality.

If all symbols at the tail positions are $\times$ and the sign is $-$,
we apply the switch functor to $f_\lambda$ and reduce the situation to
the previous case.

If the tail position contains a core symbol, we apply  elementary change
bt) or ct) and reduce the situation to the previous case.

Thus, the statement follows by induction.
\end{pf}

\section{The action of translation functors in the  case $SOSP(2m,2n)$}

In this section $G=SOSP(2m,2n)$.
There is an involutive automorphism $\sigma$ preserving the maximal
torus $H$ which acts on $\Lambda$ by the formula
$$\sigma(\varepsilon_m)=-\varepsilon_m,\;\;\sigma(\varepsilon_i)=-\varepsilon_i\;\;\text{if}\;\;i\neq m\;\;
\sigma(\delta_j)=\delta_j.$$
The induced action of $\sigma$ on the weight diagrams is the change of
the sign if the diagram has a sign, otherwise $\sigma$ preserves the weight diagram.

The action of $\sigma$ on $\mathcal F$ preserves blocks except the
case when $\chi$ is typical and the diagram has an empty
tail position. In the latter case $\sigma$ permutes two
blocks. If $M\in\mathcal F$ we denote by $M^\sigma$ the twist of $M$
by $\sigma$.

The induced action of $\sigma$ on the Grothendieck ring $\mathcal K(\mathcal F)$
has two eigenspaces  $\mathcal K(\mathcal F)^{\pm}$ with eigenvalues
$\pm 1$. So we have the following decompositions 
$$\mathcal K(\mathcal F)=\mathcal K(\mathcal F)^+\oplus\mathcal K(\mathcal F)^-$$
and
$$\mathcal K(\mathcal E)=\mathcal K(\mathcal E)^+\oplus\mathcal K(\mathcal E)^-,\,\mathcal K(\mathcal P)=\mathcal K(\mathcal P)^+\oplus\mathcal K(\mathcal P)^-.$$
Note that the translation functors preserve those splittings.

\subsection {Categorification of $\goth{gl}_{\infty}$} 
As in Section \ref{cat} we realize translation functors by certain
linear operators in a representation of
$\mathfrak{gl}_{\infty}$.  By $\mathfrak{gl}_{\infty}$ we understand
the Lie algebra with Dynkin diagram
$$\ldots-\circ - \circ - \circ - \circ  \ldots ,$$ and by $U$ its standard
representation with basis 
$u_{i}$ with $i\in\mathbb Z$. Let $E_{i,j}$ be the element of 
$\mathfrak{gl}_{\infty }$ which acts on $V$ the following way:
$E_{i,j}(v_k) = \delta _{jk}v_i$. Denote the weight of $v_a$ by
$\gamma_a$ for $a>0$ and by $-\gamma_{-a}$ for $a<0$. Set weight of $v_0$
to be zero.
Let $U^{\leq 0}$ be the span of $u_i$ for all $i\leq 0$, $U^+$ and $U^-$
be the span of $u_i$ for $i>0$ and $i<0$ respectively. Set
$$X^+=\Lambda^m(U^{\leq 0})\otimes\Lambda^n(U^+),$$
$$X^-=\Lambda^m(U^{-})\otimes\Lambda^n(U^+).$$

Finally set 
$$F_{i,j}=(-1)^{i+j+1}(E_{i,j}+E_{-j,-i}),\,\,\text {for}\,\, i,j>0;\,\,F_{i,0}=2E_{0,-i}, F_{0,i}=E_{-i,0}.$$
Those elements generate the Lie algebra $\mathfrak{sl}_{{\infty}/2}\oplus\mathfrak{sl}_{\infty/2}$ 
inside $\mathfrak{gl}_{\infty}$.

Let $\lambda+\rho\in\Lambda^+$ be
such that 
$\lambda + \rho = \sum _{i=1} ^m a_i \varepsilon _i + \sum _{j=1}  ^n b_j \delta _j$ 
with $a_i\in\mathbb Z_{\geq 0},b_j \in \mathbb Z_{>0}$, we associate to $\lambda$ the
vector 
$$x_\lambda:= u_{-a_m}\wedge \ldots \wedge u_{-a_{1}} \otimes u_{b_1}\wedge \ldots \wedge u_{b_n}.$$

The weight of $x_\lambda$ and  the translation functors $T_{a,a+1}$
and  $T_{a+1,a}$ (for $a\in\mathbb Z_{>0}$) are defined as in  Section \ref{cat}.
In addition we define the translation functors 
$T_{0,1}, T_{1,0}:\mathcal F\to \mathcal F$ by the formulae: 
$$T_{0,1}(M) = (M\otimes E)_{\gamma- \gamma _{1}}\;\;\text{for}\;\;M\in\mathcal F_\gamma$$
$$T_{1,0}(M) = (M\otimes E)_{\gamma+\gamma _{1}}\;\;\text{for}\;\;M\in\mathcal F_\gamma$$
Next we define isomorphisms of $\mathbb Z$-modules $\psi^\pm:\mathcal K(\mathcal E)^\pm\to X^\pm$
by setting 
$$\psi^+(\mathcal E(\lambda))=x_\lambda,\,\,\psi^-(\mathcal E(\lambda))=0$$
if $a_m=0$,
$$\psi^\pm(\mathcal E(\lambda)\pm\mathcal E(\lambda)^\sigma)=x_\lambda,$$
if $a_m>0$.

\begin{leme} \label{Lemmaa1} - For any $a>0$ one has 
$$\psi^\pm \circ T_{a+1,a} = F_{a+1,a}\circ \psi^\pm$$
and
$$\psi^\pm \circ T_{a,a+1} = F_{a,a+1}\circ \psi^\pm .$$ 
If $a=0$, then 
$$\psi^+ \circ T_{1,0} = F_{1,0}\circ \psi^+$$
and
$$\psi^+ \circ T_{0,1} = F_{0,1}\circ \psi^+ .$$
\end{leme}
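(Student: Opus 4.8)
The plan is to transcribe the proof of Lemma~\ref{Lemmaa}, using the $\mathfrak{gl}_\infty$-representation $X^+\oplus X^-$ in place of $\Lambda^m(V^*)\otimes\Lambda^n(V)$ and keeping careful track of the $\sigma$-symmetrization built into $\psi^\pm$. Fix $\lambda$ with $\lambda+\rho\in\Lambda^+$ and $m$-th $\varepsilon$-coordinate $a_m\ge 0$. To evaluate $T_{a,a+1}$ (for $a>0$) on Euler characteristics I would use that $\Gamma_i(G/B,C_\lambda\otimes E)=\Gamma_i(G/B,C_\lambda)\otimes E$ (\cite{VeraCaroI}, Lemma~1(b) and Corollary~1), where $E$ is the standard $\mathfrak{osp}(2m,2n)$-module; since the $\mathfrak b$-module $C_\lambda\otimes E$ is filtered by the one-dimensional modules $C_\mu$ with $\mu=\lambda\pm\varepsilon_i,\lambda\pm\delta_j$ and the Euler characteristic depends only on the associated graded sheaf, one gets $T_{a,a+1}(\mathcal E(\lambda))=\sum_\mu\mathcal E(\mu)$, summed over the $\mu$ with $\gamma(\mu)=\gamma(\lambda)+\gamma_a-\gamma_{a+1}$. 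By (\ref{char}) (equivalently Lemma~\ref{symmetry}(a)) a non-dominant such $\mu$ has $(\mu+\rho,\beta)=0$ for an even root $\beta$ and hence contributes $0$; since $a>0$, the constraint on $\gamma$ only involves positions $a$ and $a+1$ of $f_\lambda$, neither of which is the tail, so I would record the surviving $f_\mu$ in a short table completely analogous to figure~1, noting that by the strict decrease of $a_1>a_2>\cdots$ and $b_1>b_2>\cdots$ there is at most one contribution from the $a_i$'s and one from the $b_j$'s.

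To match the right-hand side I would compute $F_{a,a+1}=(-1)^{a+(a+1)+1}(E_{a,a+1}+E_{-(a+1),-a})=E_{a,a+1}+E_{-(a+1),-a}$: by the Leibniz rule $E_{a,a+1}$ replaces a factor $u_{a+1}$ by $u_a$ inside the $\Lambda^n(U^+)$-component (the $\delta$-data) while $E_{-(a+1),-a}$ replaces a factor $u_{-a}$ by $u_{-(a+1)}$ inside the $\Lambda^m(U^{\le 0})$-component (the $\varepsilon$-data); in each case the basis vector is substituted in its own slot, so no sign appears, and the term vanishes exactly when the target slot is already occupied, i.e.\ when $f_\mu$ would carry a forbidden repetition, i.e.\ when $\mu$ is non-dominant. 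This matches the table entry by entry. Finally $T_{a,a+1}$ commutes with $\sigma$ (the latter acts only on the tail data, the former for $a>0$ only moves core symbols between positions $a$ and $a+1$, and $E^\sigma\cong E$), so it preserves $\mathcal K(\mathcal E)^\pm$; comparing with the definition of $\psi^\pm$ then gives $\psi^\pm\circ T_{a,a+1}=F_{a,a+1}\circ\psi^\pm$. The identity for $T_{a+1,a}$, $F_{a+1,a}$ comes out of the same computation (or from adjointness of $T_{a,a+1}$ and $T_{a+1,a}$).

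For $a=0$ only $\psi^+$ is involved and $T_{1,0}$ shifts the block by $\gamma_1$; rerunning the computation, the only dominant weight in $T_{1,0}(\mathcal E(\lambda))$ is $\mu=\lambda-\varepsilon_m$, which occurs when $f_\lambda$ has $>$ at position~$1$ (i.e.\ $a_m=1$) and moves it to the tail. The factor~$2$ in $F_{1,0}=2E_{0,-1}$ is then accounted for by the $\sigma$-symmetrization: for $a_m=1$ one has $\psi^+(\mathcal E(\lambda)+\mathcal E(\lambda)^\sigma)=x_\lambda$, and since $\lambda-\varepsilon_m=\lambda^\sigma+\varepsilon_m$ (both with $m$-th $\varepsilon$-coordinate $0$) \emph{both} summands translate to $\mathcal E(\lambda-\varepsilon_m)$, so $T_{1,0}(\mathcal E(\lambda)+\mathcal E(\lambda)^\sigma)=2\,\mathcal E(\lambda-\varepsilon_m)$ and its $\psi^+$-image is $2x_{\lambda-\varepsilon_m}=2E_{0,-1}(x_\lambda)$. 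For $T_{0,1}$ versus $F_{0,1}=E_{-1,0}$ the bookkeeping is similar: when $a_m=0$, $\psi^+$ is applied to the single class $\mathcal E(\lambda)$ while $T_{0,1}(\mathcal E(\lambda))=\mathcal E(\lambda+\varepsilon_m)+\mathcal E((\lambda+\varepsilon_m)^\sigma)$, whose $\psi^+$-image is one copy of $x_{\lambda+\varepsilon_m}=E_{-1,0}(x_\lambda)$, so the coefficient is $1$.

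The $a>0$ part is essentially a routine transcription of Lemma~\ref{Lemmaa}; I expect the genuinely delicate point to be the $a=0$ case, where one must bookkeep the $\sigma$-symmetrization in $\psi^\pm$ precisely enough to see why $F_{1,0}$ carries the factor~$2$ whereas $F_{0,1}$ does not. One must also check that at each pair of neighbouring positions exactly the entries predicted by $F_{a,a+1}$ survive, with no spurious contribution from moving a $\delta$ or an $\varepsilon$ ``on the wrong side'' of the diagram; this will follow from the linear independence of the $\gamma_i$ together with Lemma~\ref{symmetry}(a).
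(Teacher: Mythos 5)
Your proposal is correct and follows essentially the same route as the paper, which proves the lemma ``exactly as Lemma \ref{Lemmaa} by direct comparison'' and simply tabulates the actions of $T_{0,1}$ and $T_{1,0}$ on weight diagrams in figures 2 and 3. Your explicit bookkeeping of the $\sigma$-symmetrization at the tail position --- in particular that $\lambda-\varepsilon_m=\lambda^\sigma+\varepsilon_m$ forces the factor $2$ in $F_{1,0}=2E_{0,-1}$ while $F_{0,1}=E_{-1,0}$ carries coefficient $1$ --- is exactly the content of those two tables.
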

\begin{proof}
This lemma can be proven exactly as
Lemma \ref{Lemmaa} by direct comparison. 

The action of the functors $T_{a,a+1}$ for $a\geq 1$ in terms of
weight diagrams is given in figure 1.

Below in figure 2 and figure 3 we give the action of $T_{0,1}$ and
$T_{1,0}$ respectively.

\end{proof}
\bigskip

$$\begin{array}{ccccccccccc}
f_{\lambda} && 0 & 1  & f_{\mu}& &  0 & 1 &\\
&&&&&\\
& [\pm]&\circ & \circ &&& \emptyset &&&&\\
&[\pm]&\circ & < &&& \emptyset&&&&\\
& [\pm]&\circ &>&&&\emptyset&&&&\\
&[\pm]& \circ & \times &&& \emptyset&&&&\\
&& > & < &&[+]&\circ&\times&\oplus &[-]\circ& \times\\
& &> & \circ &&[+]&\circ&>&\oplus &[-]\circ&>\\
& &> & > &&& \emptyset&&&&\\
& &> & \times &&& \emptyset&&&&\\
&&&&(figure \; 2)&&&&
\end{array}$$

\bigskip

$$\begin{array}{cccccccccc}
f_{\lambda} && 0 & 1 & f_{\mu} & 0 & 1 &\\
&&&&&\\
& [\pm]&\circ & \circ && \emptyset &&&&\\
&[\pm]&\circ & < && \emptyset&&&&\\
& [\pm]&\circ & >&&>&\circ &&&\\
&[\pm] &\circ & \times && >&<&&&\\
&& > & < &&\emptyset&&&&\\
& &> & \circ &&\emptyset&&&&\\
& &> & > && \emptyset&&&&\\
& &> & \times && \emptyset&&&&\\
&&&(figure \; 3)&&&&
\end{array}$$

\bigskip

\subsection{ Action of translation functors on simple modules and PIMs}
 The following statement is analogous to Lemma \ref{lemalpha}.
\begin{leme}\label{lemalphaev} - (a) Let $\chi$ and $\theta$ be two distinct central 
characters such that $at(\chi)\geq at(\theta)$. Assume that $a\geq 1$. Let $T=T_{a,a+1}$ or
$T_{a+1,a}$ be a translation functor which maps  
$\mathcal F _{\chi}$ to $\mathcal F _{\theta}$. Then, if 
some dominant weight $\lambda$ (resp. $\lambda _1$, $\lambda _2$) has central
character $\chi$, then $T(L_{\lambda})$ is either a simple module of 
$\mathcal F _{\theta}$ or zero. 

Moreover, if $T(L_{\lambda _1}) = T(L_{\lambda _2})\neq 0$, then 
$\lambda _1 =\lambda _2$.    

(b) If  $at(\chi)\leq at(\theta)$ and $a\geq 1$, then $T(P_\lambda)$
is either PIM or zero and if $T(P_{\lambda _1}) = T(P_{\lambda _2})\neq 0$, then 
$\lambda _1 =\lambda _2$. 
\end{leme}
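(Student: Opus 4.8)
The plan is to follow the proofs of Lemma~\ref{lemalpha} and Lemma~\ref{lembeta} essentially verbatim, the only point being that the hypothesis $a\geq 1$ keeps $T$ and its adjoint away from the tail position $0$. Indeed, $T_{a,a+1}$ and $T_{a+1,a}$ with $a\geq 1$ only alter the columns $a$ and $a+1$ of a weight diagram, so the sign indicators $[\pm]$ and the automorphism $\sigma$ play no role, and the local change of $f_\lambda$ is described by figure~1 (and its obvious transpose for $T_{a+1,a}$), exactly as in the proof of Lemma~\ref{Lemmaa1}. Throughout I will use that the standard $\goth{osp}(2m,2n)$-module $E$ is self-dual, so that $T_{a,a+1}$ and $T_{a+1,a}$ are mutually adjoint (the analogue of (\ref{adjoint}) holds here), and that the weights of $E$ are $\pm\varepsilon_j,\pm\delta_k$, with no zero weight.

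For part (a) I would argue as in Lemma~\ref{lemalpha}. Every $\goth b$-singular vector of $L_\lambda\otimes E$ has weight $\lambda\pm\varepsilon_j$ or $\lambda\pm\delta_k$, and looking only at the columns $a,a+1$ of $f_\lambda$ (figure~1, resp.\ its transpose) one checks that at most one such weight has central character $\theta$; the passage from ``a unique such weight'' to ``$T(L_\lambda)$ simple or zero'' is then exactly as in Lemma~\ref{lemalpha}. For the injectivity statement, assume $T(L_{\lambda_1})=T(L_{\lambda_2})=L_\mu\neq 0$, so that $\lambda_1-\lambda_2$ is zero, a nonzero multiple of an even root, or an isotropic root. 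The first case is vacuous; in the second, writing the even root as $\beta$ we get $\lambda_1+\rho=s_\beta(\lambda_2+\rho)$, incompatible with both weights being dominant unless $(\lambda_1+\rho,\beta)=0$, in which case $\lambda_1=\lambda_2$. In the isotropic case $\lambda_1-\lambda_2=\alpha=\pm(\varepsilon_i+\delta_j)$, which we may take positive, I would invoke the $SOSP(2m,2n)$-analogue of Lemma~\ref{ext}, whose proof rests on Lemma~2 and Propositions~2 and~5 of \cite{VeraCaroI} and goes through unchanged, to get $[\Gamma_0(G/B,C_{\lambda_1}):L_{\lambda_1}]=1$ and $[\Gamma_0(G/B,C_{\lambda_1}):L_{\lambda_2}]\geq 1$; combining this with $T(\Gamma_0(G/B,C_{\lambda_1}))=\Gamma_0(G/B,C_\mu)$ (Corollary~1 of \cite{VeraCaroI}, cf.\ (\ref{tens})) and $[\Gamma_0(G/B,C_\mu):L_\mu]=1$ forces one of $T(L_{\lambda_1}),T(L_{\lambda_2})$ to be zero, a contradiction.

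For part (b) I would mimic Lemma~\ref{lembeta}. Since $T(P_\lambda)$ is projective in $\mathcal F_\theta$, it suffices to see that $\dim\operatorname{Hom}_{\goth g}(T(P_\lambda),L_\mu)\leq 1$ for all $\mu$ and is nonzero for at most one $\mu$. The functor $T^*$ adjoint to $T$ is the translation functor of the same index $a\geq1$ from $\mathcal F_\theta$ to $\mathcal F_\chi$, and $at(\theta)\geq at(\chi)$, so part (a) applies to $T^*$: the module $T^*(L_\mu)$ is simple or zero. Hence $\operatorname{Hom}_{\goth g}(T(P_\lambda),L_\mu)=\operatorname{Hom}_{\goth g}(P_\lambda,T^*(L_\mu))$ is at most one-dimensional, and it is nonzero precisely when $T^*(L_\mu)=L_\lambda$, which by the injectivity in part (a) singles out at most one $\mu$. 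Thus $T(P_\lambda)$ has simple head and so is a PIM or zero. For the injectivity statement in (b): if $T(P_{\lambda_1})=T(P_{\lambda_2})=P_\mu\neq 0$, then, $L_\mu$ being the head of $P_\mu$, the previous paragraph forces $T^*(L_\mu)=L_{\lambda_1}$ and $T^*(L_\mu)=L_{\lambda_2}$, whence $\lambda_1=\lambda_2$.

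The step I expect to be the main obstacle is the isotropic-root case of part (a): it requires transplanting Lemma~\ref{ext}, together with Lemma~2 and Propositions~2 and~5 of \cite{VeraCaroI}, to $SOSP(2m,2n)$, and checking that the parabolic subgroup used there is still available. The only other point needing care is the claim that among $\lambda\pm\varepsilon_j,\lambda\pm\delta_k$ at most one weight has central character $\theta$; this is a finite inspection of the columns $a$ and $a+1$ of $f_\lambda$, made routine precisely because $a\geq1$ reduces it to reading off figure~1 and its transpose.
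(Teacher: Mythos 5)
Your overall strategy is exactly what the paper intends: Lemma \ref{lemalphaev} is stated there without proof, with only the remark that it is analogous to Lemma \ref{lemalpha}, and your part (b) (adjunction with $T^*$, simple head, hence PIM or zero, plus the injectivity via $T^*(L_\mu)$) is a faithful adaptation of Lemma \ref{lembeta}. The observation that $a\geq 1$ keeps all moves away from the tail position, so that the sign indicators and the involution $\sigma$ do not interfere, is also the correct reason why the odd-case arguments transfer, and you rightly flag the transplantation of Lemma \ref{ext} (which does go through, the quoted results of \cite{VeraCaroI} being proved for all orthosymplectic superalgebras).

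There is, however, one step that fails as written. In the injectivity part of (a) you assert that $\lambda_1-\lambda_2$, being a difference of two weights of $E$, is zero, a nonzero multiple of an even root, or an isotropic root, and you dispose of the second case by the reflection $s_\beta$. For $\goth{osp}(2m,2n)$ the even roots are $\pm\varepsilon_i\pm\varepsilon_j$ ($i\neq j$), $\pm\delta_i\pm\delta_j$ and $\pm 2\delta_i$; the difference $\lambda_1-\lambda_2=\pm 2\varepsilon_i$ (coming from $\nu_1=-\varepsilon_i$, $\nu_2=\varepsilon_i$) is \emph{not} a multiple of an even root, since the sign change of a single $\varepsilon$-coordinate does not lie in the Weyl group of the $D_m$-part of $\goth g_0$. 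In the model proof of Lemma \ref{lemalpha} the same difference $\pm 2\varepsilon_i$ occurs, but there $\varepsilon_i$ is an even root of $\goth{osp}(2m+1,2n)$, so the reflection argument applies; that is precisely what breaks in type $D$. The case is harmless but needs its own two lines: writing $\lambda_k+\rho=\sum a_i^{(k)}\varepsilon_i+\sum b_j^{(k)}\delta_j$, if $i<m$, or if $i=m$ and $|a_m^{(1)}|\neq|a_m^{(2)}|$, then the diagrams of $\lambda_1$ and $\lambda_2$ place the relevant core symbol at different positions, so the cores and hence the central characters differ, contradicting $\lambda_1,\lambda_2\in\mathcal F_\chi$; the only surviving subcase is $i=m$ with $a_m^{(1)}=1=-a_m^{(2)}$, and then $\mu=\lambda_1-\varepsilon_m$ has $m$-th coordinate $0$, i.e. the symbol moves between positions $1$ and $0$, which corresponds to the weight change $\pm\gamma_1$ of $T_{1,0}$ or $T_{0,1}$ and is excluded by the hypothesis $a\geq 1$. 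With this case added, your argument is complete.
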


We define non-tail elementary changes as a), b), c) in subsection
\ref{ec} with the convention that a non-tail elementary change does not
change the sign of the diagram.

\begin{leme}\label{simplev} - Let $\lambda$ be a dominant
  weight. Assume that the degree of atypicality of $\lambda$
  is not less than  the degree of atypicality of
  $\gamma(\lambda)-\gamma_1$.
If
  $f_\lambda$ does not have $>$ at the tail position, then
  $T_{0,1}(L_\lambda)=0$.
If $f_\lambda$ has $>$ at the tail position, then the action of
$T_{0,1}$ 
is given by the following change in the diagram $f_\lambda$.

1) $ {}^>_{\times^k}\circ\ldots\to\times^{k}>\ldots;$

2) ${}^>_{\times^{k-1}}\times\ldots\to\times^{k}>\ldots;$

3)  $>\circ\ldots\to\;[+]\circ>\ldots\oplus [-]\;\;\circ >\ldots.$

Assume that the degree of atypicality of $\lambda$
  is not less than  the degree of atypicality of
  $\gamma(\lambda)+\gamma_1$.
If $T_{1,0}(L_\lambda)\neq 0$, then $f_\lambda$ has  no $>$ at the tail
position and $f_\lambda(1)=\circ$ or $>$. The action of $T_{1,0}$ is given by the
following change in $f_\lambda$.

4)  $\times^k\circ\ldots\to {}^>_{\times^{k-1}}<\ldots;$ 

5)  $\times^k >\ldots\to {}^>_{\times^{k}}\circ\ldots;$ 

6)  $[\pm]\circ >\ldots\to\;>\circ\ldots.$
\end{leme}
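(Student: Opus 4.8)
The plan is to mimic the strategy used for Lemma \ref{lemdelta} and Lemma \ref{switch}, transferring the computation of the Euler characteristic (already recorded in figures 2 and 3) into a statement about simple modules via the parabolic induction / highest-weight-quotient technique. First I would set up notation: for a dominant $\lambda$ whose tail-position symbol is $>$, write $\mu$ for the weight obtained by the rule of cases 1), 2) or 3); in cases 1) and 2) this is a single dominant weight, while in case 3) one gets the $\sigma$-pair $[+]\circ>\ldots$ and $[-]\circ>\ldots$. The key input is that by Lemma \ref{lemalphaev}(a)-type reasoning (applied to $a=0$, together with the sign/switch bookkeeping of Lemma \ref{switch}), $T_{0,1}$ sends a simple module to either a simple module, a sum of two simples differing by $\sigma$, or zero, and the only candidate highest weights are among $\lambda\pm\varepsilon_j,\lambda,\lambda\pm\delta_k$ with the correct $\mathfrak{gl}_\infty$-weight $\gamma(\lambda)-\gamma_1$.

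The core step is then a multiplicity argument exactly as in the proof of Lemma \ref{lemdelta}. One starts from $\Gamma_0(G/B,C_\lambda)$, whose only occurrence of $L_\lambda$ has multiplicity one (Lemma 2 of \cite{VeraCaroI}), applies $T_{0,1}$ and uses Corollary 1 of \cite{VeraCaroI} (in the form (\ref{tens})) to identify $T_{0,1}(\Gamma_0(G/B,C_\lambda))$ with $\Gamma_0(G/B,C_\mu)$ — or with $\Gamma_0(G/B,C_{\mu^+})\oplus\Gamma_0(G/B,C_{\mu^-})$ in case 3), since $T_{0,1}(\mathcal E(\lambda))$ is read off from figure 2. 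Because $L_\mu$ (resp. each of $L_{\mu^\pm}$) occurs with multiplicity one in the target, and because any other simple subquotient $L_\nu$ of $\Gamma_0(G/B,C_\lambda)$ has $\nu<\lambda$ and hence, by the injectivity of $T_{0,1}$ on simples with higher atypicality (the $a=0$ analogue of Lemma \ref{lemalpha}), cannot be mapped to $L_\mu$, one forces $T_{0,1}(L_\lambda)=L_\mu$ (resp. $=L_{\mu^+}\oplus L_{\mu^-}$). The vanishing statement when $f_\lambda$ has $\circ$, $<$ or $\times$ at the tail is handled by the parabolic-induction device of Lemma \ref{switch}: realize $L_\lambda$ as a quotient of $S_\lambda=U(\goth g)\otimes_{U(\goth q)}C_\lambda$ for the appropriate $\goth q$ with small orthosymplectic semisimple part, observe $(S_\lambda\otimes E)_{\gamma(\lambda)-\gamma_1}=0$ by a direct weight count on the $\goth q$-module $C_\lambda\otimes E$, and conclude $T_{0,1}(L_\lambda)=0$; since $L_\lambda$ occurs once in $S_\lambda$ and the other subquotients are "lower", nothing else can survive either. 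The same template, reading figure 3 in place of figure 2 and exchanging the roles of $\varepsilon_m$ and $\delta_n$ at the tail, proves the $T_{1,0}$ statements 4), 5), 6) and the corresponding vanishing.

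The statements about PIMs are then obtained by duality, exactly as in Lemma \ref{switch} and Lemma \ref{lembeta}: $T_{0,1}$ and $T_{1,0}$ are (up to the sign bookkeeping coming from $sw$) mutually adjoint on the relevant blocks, so $\operatorname{Hom}_{\goth g}(T_{0,1}(P_\lambda),L_\mu)=\operatorname{Hom}_{\goth g}(P_\lambda,T_{1,0}(L_\mu))$ is at most one-dimensional and vanishes unless $f_\mu$ maps back to $f_\lambda$; hence $T_{0,1}(P_\lambda)$ is a PIM or zero, and similarly for $T_{1,0}$. I would phrase this once and invoke it for both functors.

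The main obstacle I expect is the bookkeeping at the tail position in case 3), where the single simple $L_\lambda$ with diagram $>\circ\ldots$ is sent to a genuine direct sum $L_{\mu^+}\oplus L_{\mu^-}$ of two simples interchanged by $\sigma$: one must be careful that the projection to the block $\mathcal F_{\gamma(\lambda)-\gamma_1}$ really is the full $\Gamma_0(G/B,C_{\mu^+})\oplus\Gamma_0(G/B,C_{\mu^-})$ (using that $\sigma$ permutes the two typical-tail blocks, as noted at the start of Section 7) and that the multiplicity-one-plus-``all other subquotients are strictly lower'' argument still pins down both summands simultaneously rather than just their sum. A secondary technical point is checking, for the vanishing cases, that the parabolic $\goth q$ with semisimple part $\goth{osp}(2k+1,2k)$ or $\goth{osp}(2k+3,2k)$ etc. is available and that $C_\lambda$ indeed extends to a $\goth q$-module with $(C_\lambda\otimes E')_{\gamma(\lambda)-\gamma_1}=0$ on the nose — this is the same computation as in Lemma \ref{switch} but needs the weight $\gamma_1$ shift, so I would do it carefully once and reuse it.
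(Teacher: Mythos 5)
The first thing to say is that the paper does not actually prove this lemma: its entire proof is the citation ``immediate consequence of Theorem 3.2(iii) in \cite{SLMN}''. So any self-contained argument is by necessity a different route, and your plan --- transplanting the $\Gamma_0(G/B,C_\lambda)$ multiplicity argument of Lemma \ref{lemdelta} and the parabolic-induction trick of Lemma \ref{switch} --- is a reasonable one in spirit. It is essentially the kind of argument one would expect the cited theorem to be proved by.

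However, as written the proposal has concrete gaps at exactly the point where the lemma is delicate, namely the tail position. First, you invoke ``Lemma \ref{lemalphaev}(a)-type reasoning applied to $a=0$'', but the paper deliberately restricts Lemma \ref{lemalphaev} to $a\geq 1$, and its conclusion is in fact false for $a=0$ (case 3) produces a direct sum of two simples); the uniqueness/injectivity input you need at the tail is precisely what is \emph{not} available and would have to be established from scratch. Second, the identification $T_{0,1}\bigl(\Gamma_0(G/B,C_\lambda)\bigr)=\Gamma_0(G/B,C_\mu)$ via Corollary 1 of \cite{VeraCaroI} requires that only one weight among $\lambda\pm\varepsilon_i,\lambda\pm\delta_j$ carries the target central character. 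At the tail this fails: for $f_\lambda={}^>_{\times^k}\circ\ldots$ every $\lambda+\varepsilon_i$ with $a_i=0$ and every $\lambda-\delta_j$ with $b_j=0$ has $\gamma$-weight $\gamma(\lambda)-\gamma_1$, so $(C_\lambda\otimes E)_{\Phi^{-1}(\gamma')}$ is a genuine extension of several $C_\nu$'s and one only gets a long exact sequence involving $\Gamma_1$ of the non-dominant $\nu$'s, not a single $\Gamma_0$; you must argue separately that these terms do not contribute. Third, the parabolic-induction vanishing argument is tailored to the switch functor, where the relevant block of $C_\lambda\otimes E$ as a $\goth q$-module vanishes because $E$ restricts to the standard module $E'$ of the Levi plus a trivial summand; for the block $\gamma(\lambda)-\gamma_1$ this vanishing does not hold on the nose and the computation you defer is exactly the nontrivial content. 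A cleaner route to the vanishing statements is the singular-vector analysis opening the proof of Lemma \ref{lemalpha}: all $\goth b$-singular vectors of $L_\lambda\otimes E$ have weights $\lambda\pm\varepsilon_i,\lambda\pm\delta_j$, and when none of those with central character $\gamma(\lambda)\mp\gamma_1$ is dominant the block projection must be zero. Finally, note that the statement under review concerns only simple modules; the PIM assertions you prove by adjunction belong to Corollary \ref{pimev}, which the paper indeed deduces exactly as you describe.
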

\begin{pf} The statement is an immediate consequence of Theorem 3.2
  (iii) in \cite{SLMN}.
\end{pf}

\begin{coro}\label{pimev} -  Let $\lambda$ be dominant and with degree
  of atypicality not greater than  the degree of atypicality of
  $\gamma(\lambda)+\gamma_1$. Then $T_{1,0}(P_\lambda)=0$ if $f_\lambda$
  contains $>$ at the tail position. If  $f_\lambda$ does not
  have $>$ at the tail position the action   of $T_{1,0}$ can be
  described by the following diagrams.

a') $\times^{k}>\ldots\to  {}^>_{\times^k}\ldots\oplus  {}^>_{\times^{k-1}}\times\ldots;$

b') $[\pm]\circ >\ldots\to >\circ\ldots.$

Now assume that  the degree
  of atypicality of  $\lambda$ is not greater than  the degree of atypicality of
  $\gamma(\lambda)-\gamma_1$. If $T_{0,1}(P_\lambda)\neq 0$ then
  $f_\lambda$ contains $>$ at the tail position.  The
  action of $T_{0,1}$ on $P_\lambda$ is given by one of the following changes:

c') $ {}^>_{\times^{k-1}}<\ldots\to \times^k\circ\ldots;$

d') ${}^>_{\times^{k}}\circ\ldots \to \times^k >\ldots;$

e')  $>\circ\ldots\to\;[+]\circ>\ldots\oplus [-]\;\;\circ >\ldots.$

\end{coro}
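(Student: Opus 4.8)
The plan is to deduce Corollary \ref{pimev} from Lemma \ref{simplev} by the same adjunction argument that was used to pass from Lemma \ref{lemalpha} to Lemma \ref{lembeta}, combined with the already-established action of the switch functor on PIMs (Lemma \ref{switch}) and on Euler characteristics (Lemma \ref{propalpha}). Since $P_\lambda$ is projective, its image under an exact translation functor is again projective, so it is a direct sum of PIMs; to identify which PIMs occur (and with what multiplicities) it suffices to compute $Hom_{\mathfrak g}(T_{1,0}(P_\lambda),L_\mu)$ and $Hom_{\mathfrak g}(T_{0,1}(P_\lambda),L_\mu)$ for all dominant $\mu$. Using adjointness, $T_{1,0}$ and $T_{0,1}$ are mutually adjoint (exactly as in the $SOSP(2m+1,2n)$ case, cf. (\ref{adjoint})), so $Hom_{\mathfrak g}(T_{1,0}(P_\lambda),L_\mu)=Hom_{\mathfrak g}(P_\lambda,T_{0,1}(L_\mu))$, which by projectivity of $P_\lambda$ is the multiplicity space $[T_{0,1}(L_\mu):L_\lambda]\,\mathbb C$, and dually $Hom_{\mathfrak g}(T_{0,1}(P_\lambda),L_\mu)=Hom_{\mathfrak g}(P_\lambda,T_{1,0}(L_\mu))$.

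Next I would read off the answer from Lemma \ref{simplev}. For the first statement, suppose $at(\lambda)\le at(\gamma(\lambda)+\gamma_1)$; I want to understand $T_{1,0}(P_\lambda)$. If $f_\lambda$ has $>$ at the tail position, then for every dominant $\mu$ the module $T_{0,1}(L_\mu)$, by cases 1)--3) of Lemma \ref{simplev}, has a diagram with $>$ at the tail position, and by inspecting which $\mu$ can produce $f_\lambda$ under $T_{0,1}$ — there are none, since the target diagrams in 1)--3) never have $>$ at the tail while $f_\lambda$ does, once one checks the atypicality bookkeeping — one gets $Hom_{\mathfrak g}(T_{1,0}(P_\lambda),L_\mu)=0$ for all $\mu$, hence $T_{1,0}(P_\lambda)=0$. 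If $f_\lambda$ does not have $>$ at the tail position, one runs through the diagrams $f_\mu$ with $>$ at the tail (the only ones on which $T_{0,1}$ can be nonzero, by Lemma \ref{simplev}) and records all $\mu$ with $[T_{0,1}(L_\mu):L_\lambda]\ne 0$: case 2) contributes $\mu$ with $f_\mu={}^>_{\times^{k-1}}\times\ldots$ when $f_\lambda=\times^k>\ldots$, and case 1) contributes $\mu$ with $f_\mu={}^>_{\times^k}\ldots$ (i.e. $>$ and $k$ crosses) for the same $f_\lambda$; this gives a'), and case 3) gives b'). The second statement is obtained identically using the adjoint identity $Hom_{\mathfrak g}(T_{0,1}(P_\lambda),L_\mu)=[T_{1,0}(L_\mu):L_\lambda]$ and reading cases 4)--6): case 4) gives c'), case 5) gives d'), and case 6) gives e'). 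In each case the multiplicities coming out are $1$, so the direct sums really are the ones displayed.

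The remaining point — and the one I expect to be the main obstacle — is the bookkeeping of atypicality degrees and signs, i.e. making sure the hypotheses of Lemma \ref{simplev} are met for the relevant $\mu$ and that no spurious $\mu$ sneaks in. Concretely, when I apply $Hom_{\mathfrak g}(P_\lambda,T_{0,1}(L_\mu))=[T_{0,1}(L_\mu):L_\lambda]$ I must know that $T_{0,1}(L_\mu)$ is governed by Lemma \ref{simplev}, which requires $at(\mu)\ge at(\gamma(\mu)-\gamma_1)$; this needs to be matched against the standing hypothesis $at(\lambda)\le at(\gamma(\lambda)+\gamma_1)$ via the fact that $T_{1,0}$ maps the block of $\lambda$ to a block of strictly larger or equal atypicality, and $T_{0,1}$ goes the other way. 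The sign indicators $[+],[-]$ appearing in a') and e') have to be tracked carefully: in a') the two summands come from the two distinct weights $\mu$ (one for each choice giving a diagram with and without an extra cross), while in b') and e') the $[\pm]$ on the source collapses (resp. appears) exactly as dictated by Lemma \ref{switch} / Lemma \ref{propalpha}, and one should double-check consistency with $\sigma$-twisting. Once these compatibilities are verified, the corollary follows mechanically from Lemma \ref{simplev} and the adjunction.
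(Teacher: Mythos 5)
Your proposal is correct and is exactly the argument the paper intends: the statement is labeled a corollary of Lemma \ref{simplev} precisely because it follows by the adjunction $Hom_{\mathfrak g}(T_{1,0}(P_\lambda),L_\mu)=Hom_{\mathfrak g}(P_\lambda,T_{0,1}(L_\mu))$ together with projectivity of $P_\lambda$, the same dualization used to pass from Lemma \ref{lemalpha} to Lemma \ref{lembeta}. Your atypicality bookkeeping also checks out ($at(\mu)\geq at(\gamma(\mu)\mp\gamma_1)$ for the relevant $\mu$ is equivalent to the standing hypothesis on $\lambda$), and reading cases 1)--6) backwards yields a')--e') with multiplicity one as stated.
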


We call a')-e') the elementary tail changes.
Note that a translation functor corresponding to an elementary tail
change does not always map a PIM to a PIM, a') and e') map sometimes 
a PIM to the direct sum of two PIMs. Thus, a straightforward analogue of Lemma \ref{lemgamma} 
does not hold. 
\begin{leme}\label{uniqueev} - The Grothendieck group $\mathcal K(P)$ is generated by
  $[P_\lambda]$ for typical $\lambda$ and 
  $[T_m\circ\dots\circ T_1(P_\lambda)]$ ($\lambda$ typical) where $T_i$
    does not decrease the degree of atypicality of  $T_{i-1}\circ\dots\circ T_1(P_\lambda)$. 
\end{leme}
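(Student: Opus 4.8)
The plan is to prove this by induction on the degree of atypicality $k = at(\lambda)$ of a dominant weight $\lambda$, establishing that $[P_\lambda]$ lies in the subgroup described. For $k=0$, i.e.\ $\lambda$ typical, there is nothing to prove: $[P_\lambda]$ is one of the declared generators. For the inductive step, suppose $at(\lambda) = k > 0$ and that the claim holds for all dominant weights of smaller atypicality. The goal is to express $[P_\lambda]$ as an integral combination of the declared generators, using only translation functors $T_i$ that do not decrease atypicality when applied to the relevant intermediate PIM.

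First I would run an argument parallel to Lemma \ref{lemgamma}, but now keeping careful track of which elementary changes are available in the $SOSP(2m,2n)$ setting. As in that lemma, I want to find a dominant weight $\mu$ with $at(\mu) = at(\lambda) = k$ whose diagram $f_\mu$ has all its $\times$-symbols confined to the tail position (i.e.\ only core symbols appear away from position $0$), reaching $f_\mu$ from $f_\lambda$ by a sequence of non-tail elementary changes of types b) and c) together with type a) used in reverse, all of which \emph{preserve} atypicality and (by the analogue Lemma \ref{lemalphaev}(b) and the equivalence results in \cite{VeraCaroI}) carry $P_\lambda$ to a PIM. Thus it suffices to treat the case where $f_\lambda$ has $k$ crosses at the tail position and only core symbols elsewhere, with some fixed sign indicator $[\pm]$.

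Next, for such a $\lambda$, I would apply the elementary tail change e'): if $f_\lambda = \times^k \circ \ldots$ with the crosses at the tail, then the translation functor $T_{1,0}$ (reading e' in reverse, or rather a') / d') from Corollary \ref{pimev}, after first using type~c) changes to empty position $1$) produces a diagram with one fewer cross at the tail, at the cost of introducing at most one extra PIM summand of \emph{strictly smaller} atypicality. More precisely, using a') or e') the functor sends a PIM $P_\lambda$ either to a PIM $P_{\lambda'}$ with $at(\lambda') = k-1$, or to a direct sum $P_{\lambda'} \oplus P_{\lambda''}$ where $at(\lambda') = k$ and $at(\lambda'') = k-1$ — in either case rearranging, $[P_\lambda]$ is expressed (via the adjoint / inverse relation as in Lemma \ref{lembeta} and Lemma \ref{lemdelta}) as $\pm[T(P_{\lambda'})]$ minus terms $[P_{\lambda''}]$ of atypicality $k-1$. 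The latter fall under the induction hypothesis. The atypicality-$k$ term $[T(P_{\lambda'})]$, where $T$ does not decrease atypicality on $P_{\lambda'}$, is by construction already of the declared form (it is $T$ applied to something which, by a secondary induction on the number of crosses at the tail, is a combination of the declared generators).

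The main obstacle I expect is bookkeeping the direct-sum splittings in a') and e') so that the "extra" PIM always has strictly smaller atypicality and so that the surviving atypicality-$k$ generator is reached by a chain of atypicality-non-decreasing functors starting from a typical weight — i.e.\ verifying that the chain produced never secretly passes through a diagram where a required functor would drop the atypicality. This is handled by always arranging (via type b),c) moves) that position $1$ is empty before applying the tail functor, and by choosing at each stage the functor direction (from Corollary \ref{pimev}) that matches the desired change; the combinatorics of figure~2 and figure~3 guarantee the listed outcomes are the only ones. A minor additional point is to treat the sign indicator $[\pm]$ correctly, but the switch functor together with Lemma \ref{switch} (in its $SOSP(2m,2n)$-analogue via $\sigma$) lets one pass freely between the two signs, so this causes no real difficulty. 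Assembling these pieces, every $[P_\lambda]$ is written as a $\mathbb{Z}$-combination of classes $[P_\mu]$ with $\mu$ typical and classes $[T_m\circ\cdots\circ T_1(P_\mu)]$ with each $T_i$ non-decreasing on the running PIM, which is exactly the asserted set of generators.
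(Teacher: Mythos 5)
Your overall skeleton (induction on atypicality, the elementary changes of subsection \ref{ec}, the tail changes of Corollary \ref{pimev}, and the subtraction trick for direct sums) matches the paper, but two of your key steps fail. The first is your opening reduction: you claim to transform $f_\lambda$ into a diagram with all $\times$-s at the tail position by a sequence of atypicality-preserving moves. No such moves exist. The changes b), c) only relocate core symbols, a) strictly changes the degree of atypicality in whichever direction it is read, and the tail changes a')--e') only touch positions $0$ and $1$; a diagram such as $[\pm]\,\circ\;\circ\;\times\;\circ\ldots$ cannot be brought to $\times\;\circ\ldots$ without passing through a block of strictly smaller atypicality. The paper treats the configuration ``some $\times$ is not at the tail'' not as something to be eliminated but as the \emph{easy} case: one runs the argument of Lemma \ref{lemgamma} (split the rightmost non-tail $\times$ into $>\,<$, i.e.\ realize $P_\lambda$ as $T^*(P_\nu)$ with $at(\nu)=k-1$ via Lemma \ref{lemdelta}) and invokes the induction hypothesis. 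Only the case where all $\times$-s sit at the tail requires the functors $T_{0,1}$ and $T_{1,0}$.

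The second and more serious error is your claim that the extra summand produced by a') has strictly smaller atypicality. A translation functor sends a block to a block and atypicality is a block invariant, so \emph{both} summands of $T_{1,0}(P_\nu)=P_{\lambda}\oplus P_{\mu}$ (with $f_\nu=\times^k>\ldots$, $f_\lambda={}^>_{\times^k}\circ\ldots$, $f_\mu={}^>_{\times^{k-1}}\times\ldots$) have atypicality $k$. Hence $[P_\mu]$ cannot be absorbed by the induction hypothesis on atypicality. What rescues the argument in the paper is that $f_\mu$ has a $\times$ at a non-tail position, so $[P_\mu]\in S$ by the previously settled non-tail case --- precisely the case your reduction discards, which makes your argument circular: the leftover term of the tail move is a diagram of the same atypicality that your first step would send back into the all-at-the-tail situation. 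Two smaller points: for $f_\lambda=\times^k\circ\ldots$ the clean move is c'), namely $P_\lambda=T_{0,1}(P_\nu)$ with $f_\nu={}^>_{\times^{k-1}}<\ldots$ of atypicality $k-1$ and no extra summand, rather than anything involving $T_{1,0}$ or e'); and Lemma \ref{lembeta} concerns $SOSP(2m+1,2n)$ and does not control the tail functors here --- that a PIM may go to a sum of two PIMs under $T_{1,0}$ or $T_{0,1}$ is exactly the content of Corollary \ref{pimev} and the reason this lemma is not a verbatim copy of Lemma \ref{lemgamma}.
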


\begin{pf} - We prove the statement by induction on the degree of
  atypicality. Let $S$ denote the span of $[T_m\circ\dots\circ T_1(P_\lambda)]$.
It is clear that $[P_\lambda]\in S$ for typical $\lambda$. Assume
that  $[P_\mu]\in S$ if  $at(\mu)=k-1$. Suppose that
$at(\lambda)=k$. If $f_\lambda$ has at least one $\times$ which is not
at the tail position, then we can obtain $[P_\lambda]$ from some $[P_\nu]$
with $at(\nu)=k-1$ in the same way as in the proof of Lemma \ref{lemgamma}.

Now assume that  all the $\times$ of $f_\lambda$ are at the tail
position. We check all the possible cases for $f_\lambda$.

If $f_\lambda=\times^k\circ\ldots$, then $P_\lambda=T_{0,1}(P_\nu)$
with $f_\nu= {}^>_{\times^{k-1}}<\ldots$ (Corollary \ref{pimev} c')). 
By induction assumption $[P_\nu]\in S$. Hence $[P_\lambda]\in S$.

If $f_\lambda=\times^k >\ldots$ or $f_\lambda=\times^k <\ldots$, we
use non-tail elementary changes to move the non-tail symbols to the right,
then apply the translation functor as in the previous case and then
move the non-tail symbols back. For instance, if $f_\lambda=\times^k >\circ$, we use 
$${}^>_{\times^{k-1}}<>\to \times^k\circ>\to\times^k >\circ.$$

Now let $f_\lambda= {}^>_{\times^{k}}\ldots$. Using non-tail
elementary changes we can reduce to the case  $f_\lambda={}^>_{\times^{k}}\circ\ldots$ in the same way as above.
By Corollary \ref{pimev} a')
$$T_{1,0}(P_\nu)=P_\lambda\oplus P_\mu,$$
where $f_\nu=\times^k >\ldots$ and $f_\mu={}^>_{\times^{k-1}}\times\ldots$. But 
$f_\mu$ has $\times$ at non-tail position. We have proved above
that $[P_\mu]\in S$. We also have checked above that $[P_\nu]\in S$. This implies
$[P_\lambda]\in S$.
\end{pf}

\section{PIM as a linear combination of Euler characterstics}

\subsection{The case of a tailless weight} In this subsection
$G=SOSP(2m,2n)$ or $SOSP(2m+1,2n)$.

\begin{theo}\label{lem1} - Let $\lambda$ be a tailless dominant weight. One has:
$$[P(\lambda)] = \sum _{\mu \in \mathcal P (\lambda)}  \mathcal E (\mu).$$
\end{theo}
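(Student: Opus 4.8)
The strategy is to reduce to the typical case by applying the translation functors studied in Sections~5--7, following the scheme already used in Lemma~\ref{lemgamma}. First I would check the base of the induction: if $\lambda$ is typical then $\mathcal P(\lambda)=\{\lambda\}$, and since $P_\lambda=L_\lambda$ for typical $\lambda$ while $\mathcal E(\lambda)=[L_\lambda]$ (because a typical block is semisimple with one simple object, by Lemma~\ref{symmetry} and the discussion of typical central characters), the identity $[P_\lambda]=\mathcal E(\lambda)$ holds trivially. For the inductive step, I would induct on the degree of atypicality $k$ of $\lambda$. Given $\lambda$ tailless with $k>0$, by Lemma~\ref{lemgamma} there is a dominant typical $\mu$ connected to $\lambda$ by a chain $\mu=\mu_1,\dots,\mu_r=\lambda$ of elementary changes with $P_{\mu_i}=T_{i-1}(P_{\mu_{i-1}})$ and nondecreasing atypicality; since $\lambda$ is tailless, one can arrange that the last step which raises the atypicality is an elementary change of type a) or at) in reverse, so that $\lambda$ is obtained from a tailless $\nu$ with $at(\nu)=k-1$ by applying a translation functor $T$ (type a) or at)) with $T(P_\nu)=P_\lambda$. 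By induction $[P_\nu]=\sum_{\sigma\in\mathcal P(\nu)}\mathcal E(\sigma)$.

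The heart of the argument is then to apply $T$ to both sides and match the results combinatorially. On the left we get $[P_\lambda]$ by the choice of $T$. On the right, since $\mathcal K(\mathcal E)$ is $T$-invariant, $T$ acts on each $\mathcal E(\sigma)$ by the rules computed in Lemma~\ref{Lemmaa} (figure~1) together with Lemma~\ref{propalpha} for the switch-type behaviour at the tail, or the analogous figures~2 and~3 and Lemma~\ref{Lemmaa1} in the $SOSP(2m,2n)$ case. So I would show that $T$ induces a bijection between $\mathcal P(\lambda)$ and a ``weighted'' image of $\mathcal P(\nu)$ under the figure-1 table, proving
$$\sum_{\sigma\in\mathcal P(\nu)}T\bigl(\mathcal E(\sigma)\bigr)=\sum_{\tau\in\mathcal P(\lambda)}\mathcal E(\tau).$$
This is a purely combinatorial check on cap diagrams: an elementary change of type a) at positions $t,t+1$ turns a rightmost $\times$ followed by $\circ$ into $>\,<$, which removes the outermost (rightmost) cap of the diagram. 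One checks that moving a $\times$ from left to right end of a cap of $f_\nu$ corresponds, after applying $T$, exactly to moving a $\times$ along a cap of $f_\lambda$; the extra cap that $\lambda$ has (the one involving the new $>\,<$) contributes the two terms $\circ\,\times\oplus\times\,\circ$ in the relevant row of figure~1, which is precisely the ``move across the new cap'' choice. The tail cases (at), and figures~2--3 in the even case) are handled the same way, using Lemma~\ref{propalpha} to see that the two tail configurations related by the sign indicator both appear, matching the two endpoints of the tail cap.

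The main obstacle I expect is precisely this bookkeeping: one must verify that the cap diagram of $f_\lambda$ is obtained from that of $f_\nu$ in a way compatible with the figure~1 transitions, i.e.\ that no empty position appears under a cap after the transformation and that the new cap nests correctly with the old ones, so that the $2^k$ weights of $\mathcal P(\lambda)$ biject with the $2^{k-1}$ weights of $\mathcal P(\nu)$ each split in two by the new cap. A subsidiary subtlety is to confirm that all $\mu$ arising in the figure-1 expansion of $T(\mathcal E(\sigma))$ that are \emph{not} in $\mathcal P(\lambda)$ are either non-dominant (hence contribute $0$ by Lemma~\ref{symmetry}(a) via equation~(\ref{char})) or cancel in pairs via the sign-reflection identity $\mathcal E(\lambda-\varepsilon_m)=-\mathcal E(\lambda)$; the rows of figure~1 with $\emptyset$ in the $f_\mu$ column encode exactly these vanishings. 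Once the bijection $\mathcal P(\nu)\times\{\text{two ends of new cap}\}\leftrightarrow\mathcal P(\lambda)$ is established, the theorem follows by induction. In the $SOSP(2m,2n)$ case one additionally invokes the $\sigma$-equivariance of everything and the decomposition $\mathcal K(\mathcal E)=\mathcal K(\mathcal E)^+\oplus\mathcal K(\mathcal E)^-$ so that the argument is carried out compatibly with the sign indicator.
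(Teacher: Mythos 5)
Your proposal follows essentially the same route as the paper: both reduce to the typical case via the chain of translation functors furnished by Lemma \ref{lemgamma} (resp.\ Lemma \ref{uniqueev}), and verify that each elementary change is compatible with the formula, the key point being that a change of type a) or at) creates exactly one new cap whose two endpoints account for the row $>\,<\;\to\;\circ\,\times\,\oplus\,\times\,\circ$ of figure 1, while non-dominant terms vanish by Lemma \ref{symmetry}(a). The only (harmless) imprecision is your claim that one can arrange for $\lambda$ to be obtained from a tailless $\nu$ of atypicality $k-1$ by a single type a) step — in general core symbols must still be moved afterwards (e.g.\ $f_\lambda=\times\,>\,\circ$ is reached from $>\,<\,>$ via $\times\,\circ\,>$), so the induction should run over the whole chain of elementary changes, the b) and c) steps being trivial since they merely permute symbols in every diagram of $\mathcal P(\cdot)$.
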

\begin{pf} - This lemma has the same proof as the corresponding
  statement, due to Jonathan Brundan, in the case
  $\mathfrak{gl}(m,n)$. However,
we write down the argument in our setting. 

Due to Lemma \ref{lemdelta} and Lemma \ref{uniqueev} it is sufficient to check that if the statement holds
for $P_\kappa$, then it holds for $P_\lambda=T(P_\kappa)$, where $T$ is a
translation functor corresponding to some elementary change. If the
elementary change is of type b) or c) consisting of moving a core
symbol from position $t+1$ to $t$, then clearly the weight diagrams of
$\mathcal P(\lambda)$ are obtained from those of $\mathcal P(\kappa)$
by exchanging symbols in position in $t+1$ and $t$. If the elementary
change is of type a) 
$$>\,< \to \times\,\circ,$$
then the cap diagram of $f_\lambda$ has exactly one new cap joining $\times$
and $\circ$ in $f_\lambda$. All other caps remain the same. Hence the
statement holds in this case as well.
\end{pf}
\begin{exs} Let $G=SOSP(7,6)$.

$$\begin{array}{ll}\left[P(\circ \; \times \; > \; < \; \times)\right]
    & = \mathcal E (\circ \; \times \; > \; < \; \times) + \mathcal E
    (\circ \; \times\; > \; < \; \circ \; \times) + \\
 & +\mathcal E(\circ \; \circ \; > \; < \; \times \; \circ \; \times ) + \mathcal E (\circ\; \circ\; >\; <\; \circ\; \times \; \times),\end{array}$$
 
$$\begin{array}{ll}\left[P(\circ \; \times \; \times \; \circ \; \times)\right]  & = \mathcal E (\circ \; \times \; \times \; \circ \; \times) + \mathcal E (\circ \; \times \; \circ\; \times \; \times) + \mathcal E(\circ \; \times \; \times \; \circ \; \circ \; \times) + \\
 & +\mathcal E (\circ\; \times\; \circ\; \times\; \circ\; \times)  + \mathcal E (\circ\; \circ\; \times \; \circ\; \times \; \circ\;\times) + \mathcal E (\circ\; \circ\; \circ\;  \times\; \times \; \circ\;\times) +\\
 &+ \mathcal E (\circ\; \circ\; \times \; \circ \; \circ\;\times \; \times) + \mathcal E (\circ\; \circ\; \circ\; \times \; \circ\; \times \;\times).\end{array}$$
 and the caps are the following: 
 
 \xymatrix{
 & & & & & & \\
{\circ} & {\times}\ar@{-} `u[ur] `[rrrrr]  [rrrrr] & \times\ar@{-} `u[r] `[r]  [r] & \circ & \times\ar@{-} `u[r] `[r]  [r] & \circ & \circ .}

~\\

 \end{exs}

\subsection{The general case $G=SOSP(2m+1,2n)$.} Now let $\lambda$ have a tail. Color all
$\times$ at the tail position emerald. We define
the tailless weight $\bar{\lambda}$ as follows. Ignore for a moment the
tail position and consider the cap diagram associated to the
weight diagram for the remaining positions. We call a position free if it
is empty and is not an end of a cap. Now, we move $\times$-s from the tail
position to free positions according to the rule below:

$\bullet$ If $f_\lambda$ has a core symbol at the tail, move all
$\times$ from the tail position to the free positions number 
1,3,... counting from the left.

$\bullet$ If $f_\lambda$ does not have a core symbol at the tail, move all
but one $\times$ from the tail to the free positions number
2,4,... counting from the left.

\begin{ex} If $f_\lambda=(\times^3\times\circ\ldots)$, then
 $f_{\bar\lambda}=(\times\times\circ\circ\times\circ\times\circ\ldots)$.
\end{ex}

Let 
$$[P(\mu)] = \sum _{\nu} a(\mu,\nu)\mathcal E (\nu).$$ 
As follows from Theorem \ref{lem1}, if $\mu$ is tailless $a(\mu, \nu) = 0$ or $1$.

\begin{theo}\label{prop1} -  Let $G=SOSP(2m+1,2n)$. If the tail position of the diagram
  $f_{\lambda}$ contains $<$,  $>$ or  $(-)$ sign, then 
$$a(\lambda,\nu)=(-1)^{c(\lambda, \nu)}a(\bar{\lambda}, \nu)$$
where $c(\lambda,\nu)= x +y$ where $x$ is the total number of emerald
$\times$ in $f_{\bar{\lambda}}$,
and $y$ is the number of emerald $\times$ in $f_{\bar{\lambda}}$ 
moved along the caps in order to get $f_{\nu}$ from $f_{\bar{\lambda}}$.

If the tail position of $f_{\lambda}$ has a $(+)$, we change the sign
of $a(\lambda,\nu)$ for all $\nu$ such that $f_\nu$ has 
a $\times$ at the tail position.
\end{theo}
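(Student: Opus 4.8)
The plan is to mimic the inductive strategy of Theorem \ref{lem1}, but now over the \emph{generating set} of $\mathcal K(P)$ supplied by Lemma \ref{uniqueev}: it suffices to verify the claimed formula for typical $\lambda$ (where it is trivial, since $P_\lambda = \mathcal E(\lambda) = L_\lambda$ and $\lambda = \bar\lambda$ has no emerald crosses, so $c(\lambda,\nu)=0$), and then to show the formula is preserved when we apply a translation functor $T$ coming from an elementary tail change a')--e') of Corollary \ref{pimev}, or a non-tail elementary change a)--c) of Subsection \ref{ec}. By Lemma \ref{Lemmaa} (in the form of the operators $E_{a,a+1}$, $E_{a+1,a}$ on $\Lambda^m(V^*)\otimes\Lambda^n(V)$) the action of each translation functor on the $\mathcal E(\mu)$-side is completely explicit, given by figure 1; so the whole argument reduces to a bookkeeping comparison on weight diagrams, exactly as in Brundan's $\goth{gl}(m,n)$ argument, the only new feature being the emerald coloring and the sign $(-1)^{c(\lambda,\nu)}$.

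First I would handle the non-tail elementary changes b), c) (moving a core symbol one step, an equivalence of blocks) and a) ($>\,<\to\times\,\circ$, which creates one new cap away from the tail). Here the tail position of $f_\lambda$ is unchanged, the set of emerald crosses in $f_{\bar\lambda}$ is unchanged, and the cap structure away from the tail changes in the same controlled way as in the proof of Theorem \ref{lem1}; one checks that $\bar\lambda$ transforms by the corresponding elementary change of $\bar{}$-diagrams and that the invariants $x$ and $y$ defining $c(\lambda,\nu)$ are unaffected (for b), c)) or shifted consistently on both sides (for a), where a new free position may appear and the prescription ``free positions $1,3,\dots$'' or ``$2,4,\dots$'' for constructing $\bar\lambda$ must be re-examined). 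This step is essentially the content of Theorem \ref{lem1} together with the observation that the emerald crosses, all sitting at the tail, are inert spectators.

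Next, and this is where the real work lies, I would treat the elementary tail changes a')--e') of Corollary \ref{pimev}. For each of these, Corollary \ref{pimev} tells us how $P_\lambda$ is obtained (possibly $T$ sends one PIM to a sum of two PIMs, as in a') and e'), and the switch-functor sign conventions of Lemma \ref{propalpha}, Lemma \ref{switch} enter when the $(-)$ vs $(+)$ indicator flips). On the Euler-characteristic side, applying the same $T$ to $\sum_\nu a(\bar\lambda,\nu)\mathcal E(\nu)$ and reading off figure 1 / figures 2--3 produces $\sum a(\overline{\lambda'},\nu')\mathcal E(\nu')$ up to signs; one must verify that the sign accumulated on the $\mathcal E$-side (from entries like $>\;<\;\to\;[+]\circ\,\times\,\oplus\,[-]\circ\,\times$, or from $sw(\mathcal E(\mu))=-\mathcal E(\mu)$ in Lemma \ref{propalpha}) matches $(-1)^{c(\lambda',\nu')-c(\lambda,\nu)}$. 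This amounts to: (i) checking that moving a $\times$ from the tail into the diagram via the functor adds exactly one new emerald cross, increasing $x$ by $1$; (ii) tracking which emerald crosses get ``moved along caps'', i.e.\ the parameter $y$; and (iii) confirming that the special correction for a $(+)$ at the tail (change the sign of $a(\lambda,\nu)$ whenever $f_\nu$ has a $\times$ at the tail) is exactly what makes the cases $\times^k\circ\cdots$ and $\times^k>\cdots$ consistent with the cases where the tail already carried a core symbol, via $sw$. The main obstacle is precisely this sign bookkeeping in case a') and the $(+)$-tail correction: one has to set up the definition of $\bar\lambda$ and of $c(\lambda,\nu)$ so that a single diagrammatic move changes $x+y$ by a predictable parity, and then check \emph{all} rows of figures 1--3 against it; everything else is the same induction as Theorem \ref{lem1}. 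I expect no conceptual difficulty beyond organizing this finite case-check cleanly, using Lemma \ref{uniqueev} to know that these moves suffice to generate all of $\mathcal K(P)$.
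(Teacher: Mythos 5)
Your overall strategy is exactly the paper's: induct over a chain of translation functors reducing $\lambda$ to a typical (hence tailless) weight, compare the action of each functor on $[P]$ and on the $\mathcal E(\mu)$'s via the weight-diagram tables, and track the sign through the emerald crosses and the new cap created at the tail, with the switch functor accounting for the $(+)/(-)$ discrepancy. The base case is Theorem \ref{lem1} (tailless weights), and the only tail change that actually produces a sign is the one creating a new cap at positions $\frac12,\frac32$, which is precisely where the paper concentrates its verification.

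However, you have imported the wrong machinery for this theorem. Theorem \ref{prop1} concerns $G=SOSP(2m+1,2n)$, where positions are half-integers, there are no functors $T_{0,1}$, $T_{1,0}$, and figures 2--3, Corollary \ref{pimev} (the tail changes a')--e')) and Lemma \ref{uniqueev} all belong to the $SOSP(2m,2n)$ analysis of Section 7; checking those cases would not prove the statement at hand. The correct generating set is given by Lemma \ref{lemgamma}, the relevant tail moves are at), bt), ct) of Subsection \ref{ec} (realized by translation functors via Lemma \ref{lemdelta}) together with the sign change realized by the switch functor (Lemma \ref{switch} on PIMs, Lemma \ref{propalpha} on Euler characteristics). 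Concretely, the cases you must verify are: bt) and ct), where $\bar\lambda$ is obtained from $\bar\mu$ by swapping the tail $\times$ with the core symbol at $\frac32$ and $c(\lambda,\nu)=c(\mu,\nu)$; and at), where ${}^>_{\times^k}<\cdots\to(+)\times^{k+1}\circ\cdots$ adds one emerald cross and one cap joining $\frac12$ and $\frac32$, the resulting sign being absorbed by the $(+)$-tail correction via $sw$. With that substitution your argument matches the paper's proof.
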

\begin{pf} - As in the proof of Theorem \ref{lem1} we have to check that
  the statement for $P_\mu$ implies the statement for $T(P_\mu)$ for a
  translation functor $T$ corresponding to some elementary
  change. This check for elementary changes  a)-c) is completely
  analogous to the case of tailless $\lambda$. So we leave it to the
  reader.

Let $\lambda$ and $\mu$ be related by an elementary change b) i.e.
$$f_\mu=(-)\times^k\;>\ldots\rightarrow 
f_\lambda={}_{\times^k}^>\;\circ\ldots$$
Then $\bar{\lambda}$ is obtained from  $\bar{\mu}$ by switching
$\times$ at the tail position with $>$ at position $\frac{3}{2}$, and
the number of emerald $\times$ in $f_{\bar{\lambda}}$ and in  
$f_{\bar{\mu}}$ is the same.
All $\nu'\in \mathcal P(\bar\lambda)$ are obtained from  
$\nu\in \mathcal P(\bar\mu)$ by interchanging symbols at positions
$\frac{1}{2}$ and  $\frac{3}{2}$. Clearly $c(\lambda,\nu)=c(\mu,\nu)$.
The case of elementary change ct) is similar.

Now let $\lambda$ and $\mu$ be related by an elementary change a), namely
$$f_\mu={}_{\times^k}^>\;<\ldots\rightarrow 
f_\lambda=(+)\times^{k+1}\;\circ\ldots$$
Then $\bar{\lambda}$ is obtained from  $\bar{\mu}$ by
removing core symbols from $\frac{1}{2}$ and  $\frac{3}{2}$ and adding
$\times$ to the tail position  $\frac{1}{2}$. The cap diagram for  
$f_{\bar{\lambda}}$ has an additional cap joining  $\frac{1}{2}$ and
$\frac{3}{2}$. The number of emerald
$\times$ increases by $1$. However, since sign of $f_\lambda$ is $(+)$,
the signs agree after applying the switch functor to $P_\lambda$.
\end{pf}

\begin{exs} $G=SOSP(5,4)$

$\left[P \left(\begin{array}{cc} > & \\ \times & < \\ \end{array}\right)\right]= -\mathcal E (> \; < \; \times) + \mathcal E (> \; < \; \circ \; \times),$

$\left[P \left((+)\begin{array}{c} \times \\ \times \\ \end{array}\right)\right]= -\mathcal E ((+) \times \; \circ \; \times) - \mathcal E (\circ \; \times \; \times) + \mathcal E ((+) \times \; \circ \; \circ \; \times) + \mathcal E (\circ \; \times \; \circ \; \times), $

$\left[P \left((-)\begin{array}{c} \times \\ \times \\ \end{array}\right)\right]= \mathcal E ((+)\times \; \circ \; \times) - \mathcal E (\circ \; \times \; \times) - \mathcal E ((+)\times \; \circ \; \circ \; \times) + \mathcal E (\circ \; \times \; \circ \; \times) .$

\end{exs}

\subsection{The general case $G=SOSP(2m,2n)$.} 
Let $\lambda$ be a dominant weight for  $SOSP(2m,2n)$. Define a dominant weight
$\lambda'$ for  $SOSP(2m+1,2n)$ whose weight diagram satisfies the condition
$f_{\lambda'}(t) = f_{\lambda}(t- \frac{1}{2})$.
If $f_\lambda$ has some $\times$ at the tail and no $>$, then
$f_{\lambda'}$ must have a sign. In this case we use the notation $\lambda'_\pm$
depending on the sign. 

\begin{theo}\label{prop1ev} -  Let $G=SOSP(2m,2n)$. Then for any
  $\nu\in\Lambda^+-\rho$ we have
$$a(\lambda,\nu)=a(\lambda'_{\pm},\nu').$$
\end{theo}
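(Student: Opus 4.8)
The plan is to transfer the known result for $SOSP(2m+1,2n)$ (Theorems \ref{lem1} and \ref{prop1}) to the $SOSP(2m,2n)$ case via the ``shift by $\frac12$'' correspondence $\lambda\mapsto\lambda'$ (or $\lambda'_\pm$), exactly as the analogous structural results of this section were obtained by reduction to Section \ref{cat}. First I would record the elementary compatibility of the two categorifications: under the diagram shift $f_{\lambda'}(t)=f_\lambda(t-\tfrac12)$, the tail position $0$ for $SOSP(2m,2n)$ is carried to the tail position $\tfrac12$ for $SOSP(2m+1,2n)$, a sign $[\pm]$ in front of an $SOSP(2m,2n)$-diagram corresponds to the $(\pm)$-indicator of $\lambda'_\pm$, and the elementary changes a), b), c) together with the tail changes of subsection \ref{ec} for the even case are matched one-to-one with those for the odd case. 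Correspondingly, the translation functors $T_{a,a+1},T_{a+1,a}$ for $a\geq1$ and $T_{0,1},T_{1,0}$ act on weight diagrams by the same combinatorial rules (figures 1, 2, 3 versus the rules in Lemma \ref{simplev} and Corollary \ref{pimev}), and the switch functor of the even case plays the role of the switch functor of the odd case.

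Next I would set up the induction on the degree of atypicality, mirroring the proof of Lemma \ref{uniqueev}. The base case is typical $\lambda$: then $P_\lambda=\mathcal E(\lambda)=L_\lambda$ and, by Lemma \ref{symmetry} and the definitions, $a(\lambda,\nu)=\delta_{\lambda,\nu}$, while the same holds for $\lambda'_\pm$ in the odd case; using $\psi^\pm$ and the identification of $x_\lambda$ with $x_{\lambda'}$ gives $a(\lambda,\nu)=a(\lambda'_\pm,\nu')$. For the inductive step I would write $P_\lambda=T(P_\kappa)$ where $T$ is a translation functor realizing an elementary (or elementary tail) change lowering atypicality, as guaranteed by Lemma \ref{uniqueev}; the corresponding diagram $\kappa'$ (or $\kappa'_\pm$) in the odd case is obtained by the same elementary change, so $P_{\kappa'}=T'(P_{\lambda'})$ with $T'$ the matching odd-case functor. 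Applying the induction hypothesis $a(\kappa,\rho)=a(\kappa'_\pm,\rho')$ for all $\rho$, and then comparing how $T$ (via figures 1--3) and $T'$ (via Lemma \ref{simplev}, Corollary \ref{pimev}) transform the expansions $[P_\kappa]=\sum_\rho a(\kappa,\rho)\mathcal E(\rho)$ and $[P_{\kappa'}]=\sum_\rho a(\kappa'_\pm,\rho')\mathcal E(\rho')$ into those of $[P_\lambda]$ and $[P_{\lambda'}]$, one reads off $a(\lambda,\nu)=a(\lambda'_\pm,\nu')$ for every $\nu$.

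The delicate point—and the step I expect to be the main obstacle—is the bookkeeping for the two subtleties special to $SOSP(2m,2n)$: the sign $[\pm]$ attached to an empty tail position and the involution $\sigma$, which forces the splittings $\mathcal K(\mathcal F)=\mathcal K(\mathcal F)^+\oplus\mathcal K(\mathcal F)^-$ and similarly for $\mathcal K(\mathcal E)$, $\mathcal K(\mathcal P)$. When $a_m>0$ the objects $P_\lambda$ and $\mathcal E(\lambda)$ come in $\sigma$-related pairs and the natural data live on the $\pm$-eigenspaces via $\psi^\pm$; I must check that the functors $T_{0,1}$ and $T_{1,0}$ (whose action can produce the direct sum $[+]\circ\ldots\oplus[-]\circ\ldots$, cf. figures 2 and 3 and Corollary \ref{pimev} a'), e')) intertwine correctly with the odd-case $(\pm)$-indicator changes, so that the sign convention in the definition of $\lambda'_\pm$ is preserved at each step. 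Concretely, I would verify the two cases $a_m=0$ and $a_m>0$ separately, using $\psi^\pm\circ T=F\circ\psi^\pm$ (Lemma \ref{Lemmaa1}) to reduce the even-case computation to the same $\mathfrak{gl}_\infty$-module computation that underlies the odd case, and check that passing through $\sigma$ contributes no extra sign beyond the one already accounted for in the statement. Once this compatibility is in place, the identity $a(\lambda,\nu)=a(\lambda'_\pm,\nu')$ follows formally from Theorems \ref{lem1} and \ref{prop1}.
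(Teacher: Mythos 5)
Your proposal follows essentially the same route as the paper: reduce to the odd case via the shift-by-$\tfrac12$ correspondence, with the base case given by typical weights and the inductive step driven by Lemma \ref{uniqueev} together with the term-by-term matching of the even and odd translation functors on both $\mathcal E$'s and PIMs. The paper packages the sign and direct-sum bookkeeping you correctly flag as the main obstacle into explicit transfer maps $\alpha,\beta$ (defined on Euler characteristics) and $\bar\alpha,\bar\beta$ (defined on projectives) and proves that they coincide on ${}_\mathbb Q\mathcal K_{odd}(P)$ (Lemmas \ref{rel1}--\ref{rel4}), but the underlying induction on the degree of atypicality is the one you describe.
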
 
\begin{rem} As follows from Theorem \ref{prop1} $a(\lambda'_{+},\nu')=a(\lambda'_{-},\nu')$
since $f_{\nu'}$ can not have $\times$ at the tail position.
Thus, Theorem \ref{prop1ev} implies that the coefficient
$a(\lambda,\mu)$ for $SOSP(2m,2n)$ can be computed using the algorithm
for $SOSP(2m+1,2n)$. If $f_\lambda$ has $>$ at the tail the coefficients remain
the same. Otherwise some coefficients disappear because $f_{\nu'}$
can not have a $\times$ at the tail position.  
\end{rem}
\begin{pf} 
Let us introduce a few notations. For any $\mathbb Z$-module $A$,
denote $_\mathbb QA=\mathbb Q\otimes_\mathbb Z A$. For $m,n$ fixed ,
we denote by $\mathcal K^+_{ev}(\mathcal F)$ the
$\sigma$-invariant subgroup of the Grothendieck group for $SOSP(2m,2n)$
and by $\mathcal K_{odd}(\mathcal F)$ the Grothendieck group for $SOSP(2m,2n)$.
The definitions of  $\mathcal K^+_{ev}(P)$,  $\mathcal K^+_{ev}(\mathcal E)$,
$\mathcal K_{odd}(P)$ and $\mathcal K_{odd}(\mathcal E)$ are obvious.

We define two $\mathbb Q$-linear maps 
$\alpha:{} _\mathbb Q\mathcal K_{odd}(\mathcal E)\to{}_\mathbb Q\mathcal K^+_{ev}(\mathcal E)$
and
$\beta:{}_\mathbb Q\mathcal K^+_{ev}(\mathcal E)\to{}_\mathbb Q\mathcal K_{odd}(\mathcal E)$
in the following way.
First we define $\alpha:U\to V\oplus V^*$ and $\beta: V\oplus V^*\to U$ by the formulae  
$$\alpha(v_{1/2})=0,\,\alpha(v_i)=u_{i-1/2},\,i>1,\,\,\alpha(w_i)=u_{1/2-i}$$
$$\beta(u_i)=w_{1/2-i}, i\leq 0,\,\, \beta(u_i)=v_{i-1/2}, i>0.$$
Next, we extend $\alpha$ and $\beta$ to
$\Lambda^m(V^*)\otimes\Lambda^n(V)$ and $X^+$ in the natural way. The
following diagram explains the maps $\alpha$ and $\beta$ on the level
of Grothendieck groups
$$\begin{array}{rcl}
&\xrightarrow{\beta}&\\
X^+&\xleftarrow{\alpha}&\Lambda^m(V^*)\otimes\Lambda^n(V)\\
\psi^+\uparrow&&\uparrow\phi\\
&\xrightarrow{\beta}&\\
_\mathbb Q\mathcal K^+_{ev}(\mathcal E)&\xleftarrow{\alpha}&_\mathbb Q\mathcal K_{odd}(\mathcal E).\\
\end{array}$$
Note that $\alpha$ is surjective, $\beta$ is injective and $\alpha\circ\beta=\operatorname{id}$.
For a linear operator $T$ in $\mathcal K^+_{ev}(\mathcal E)$
corresponding to a translation functor we define an operator $T'$ in  $\mathcal K_{odd}(\mathcal E)$
by the following rules

if $T=T_{a,a+1}$ then $T'=T_{a+1/2,a+3/2}$,

if $T=T_{a,a-1}$ and $a\neq 1$ then $T'=T_{a+1/2,a-1/2}$,

if $T=T_{1,0}$ then $T'=2T_{3/2,1/2}$.

Direct computation gives the following result:
\begin{leme}\label{rel1} One has for any $T=T_{a,a\pm 1}$
$$T=\alpha\circ T'\circ\beta.$$
\end{leme}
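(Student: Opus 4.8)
The plan is to verify the identity $T = \alpha\circ T'\circ\beta$ by direct computation, checking it separately for each type of translation functor $T_{a,a+1}$ and $T_{a,a-1}$, using the explicit descriptions of the maps $\alpha$, $\beta$ and the categorification Lemmas \ref{Lemmaa} and \ref{Lemmaa1}. The point is that under $\psi^+$ and $\phi$ the functors $T$, $T'$ become the Chevalley-type generators $F_{a,a\pm 1}$ acting on $X^+$ and $E_{a,a\pm 1}$ (suitably indexed) acting on $\Lambda^m(V^*)\otimes\Lambda^n(V)$; so it suffices to check the purely combinatorial identity on basis vectors $x_\lambda$, respectively on wedge monomials.

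First I would record how $\alpha$ and $\beta$ interact with the index shifts: by definition $\beta$ sends $u_i\mapsto w_{1/2-i}$ for $i\le 0$ and $u_i\mapsto v_{i-1/2}$ for $i>0$, while $\alpha$ sends $v_{1/2}\mapsto 0$, $v_i\mapsto u_{i-1/2}$ for $i>1$, and $w_i\mapsto u_{1/2-i}$. Thus $\alpha\circ\beta=\operatorname{id}$ on $U$ (hence on all of $X^+$ after taking wedge powers), which is already noted. The content is that conjugating the ``$\goth{osp}(2m,2n)$-side'' operator $F_{a,a\pm 1}$ by $\beta$ produces exactly the ``$\goth{osp}(2m+1,2n)$-side'' operator with shifted index, i.e. the operator $E$ associated to $T'$. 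Concretely: an index $i>0$ on the even side corresponds via $\beta$ to $v_{i-1/2}$, so raising/lowering between positions $a$ and $a+1$ (with $a\ge 1$) on the even side becomes raising/lowering between positions $a-1/2$ and $a+1/2$ on the odd side, which is precisely $T_{a+1/2,a+3/2}$ or $T_{a+1/2,a-1/2}$ up to relabeling. The only delicate indices are near zero: the generator $T_{1,0}$ on the even side involves $u_1\leftrightarrow u_0$, and $\beta$ sends $u_1\mapsto v_{1/2}$, $u_0\mapsto w_{1/2}$; since $\alpha(v_{1/2})=0$ while $\alpha(w_{1/2})=u_{0}$ — wait, $\alpha(w_{1/2})=u_{1/2-1/2}=u_0$, matching — one has to track the factor of $2$ appearing because $F_{1,0}=2E_{0,-1}$ and $F_{0,1}=E_{-1,0}$ in the definition of the $F$'s, which is exactly why one sets $T'=2T_{3/2,1/2}$ when $T=T_{1,0}$. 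The signs $(-1)^{i+j+1}$ in the definition of $F_{i,j}$ and the sign convention $\langle w_i,v_j\rangle=\delta_{ij}(-1)^{i+1/2}$ must also be checked to cancel consistently; this bookkeeping is the main source of potential error but is routine.

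The key steps, in order, are: (1) reduce to checking the identity on basis vectors $x_\lambda$ of $\mathcal K_{odd}(\mathcal E)$, using linearity and the fact that $\psi^+$, $\phi$ are isomorphisms and intertwine the functors with the explicit operators $E$, $F$ (Lemmas \ref{Lemmaa}, \ref{Lemmaa1}); (2) for $a\ge 1$, unwind the definitions of $\alpha$, $\beta$, $F_{a,a\pm 1}$ and $E_{a\pm 1/2,\dots}$ on wedge monomials and observe the index shift $i\leftrightarrow i-1/2$ makes the two sides agree termwise, including the Koszul signs from wedging; (3) handle $T_{1,0}$ and $T_{0,1}$ separately, tracking the factor $2$ coming from $F_{1,0}=2E_{0,-1}$ and the vanishing $\alpha(v_{1/2})=0$, which together force $T'=2T_{3/2,1/2}$; (4) conclude $T=\alpha\circ T'\circ\beta$ as operators on $_{\mathbb Q}\mathcal K^+_{ev}(\mathcal E)$. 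The main obstacle I anticipate is precisely step (3) together with the sign/coefficient bookkeeping in step (2): the normalization constants and the alternating signs in the definitions of $F_{i,j}$ and of the pairing on $V^*$ have to conspire exactly, and the behavior at the boundary index $0$ (where the even-side lattice $U^{\le 0}$ meets $U^+$, and where $\alpha$ kills $v_{1/2}$) is genuinely asymmetric compared to the interior, so it does not follow from the generic $a\ge 1$ case by symmetry.
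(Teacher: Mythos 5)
Your overall strategy is exactly what the paper does: its entire proof of this lemma is the phrase ``direct computation,'' and your reduction via $\psi^+$ and $\varphi$ to the linear-algebra identity $F_{a,a\pm 1}=\alpha\circ E\circ\beta$ on wedge monomials, together with the separate treatment of the boundary functors $T_{0,1}$, $T_{1,0}$ (the factor $2$ from $F_{1,0}=2E_{0,-1}$ and the vanishing $\alpha(v_{1/2})=0$), is the right way to fill it in.

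However, one step of your bookkeeping would fail as written, and it is not ``up to relabeling.'' You take the printed formula $\beta(u_i)=v_{i-1/2}$ for $i>0$ at face value and simultaneously assert $\alpha\circ\beta=\operatorname{id}$; these are incompatible, since then $\alpha(\beta(u_1))=\alpha(v_{1/2})=0$. With that $\beta$, the even positions $a,a+1$ land at odd positions $a-\frac{1}{2},a+\frac{1}{2}$, as you say, but the lemma pairs $T_{a,a+1}$ with $T'=T_{a+1/2,a+3/2}$, which acts between positions $a+\frac{1}{2}$ and $a+\frac{3}{2}$; so on the $\Lambda^n(U^+)$ factor the composite $\alpha\circ E_{a+1/2,a+3/2}\circ\beta$ would annihilate the terms that $F_{a,a+1}$ is supposed to produce (for instance $E_{a+1/2,a+3/2}(v_{a+1/2})=0$). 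The formula consistent with $\alpha\circ\beta=\operatorname{id}$, with the diagram correspondence $f_{\lambda'}(t)=f_\lambda(t-\frac{1}{2})$, and with the stated $T'$ is $\beta(u_i)=v_{i+1/2}$ for $i>0$ (a typo in the paper). With this correction every term matches: the signs $(-1)^{i+j+1}$ in $F_{i,j}$ and $(-1)^{i+1/2}$ in the pairing all reduce to $+1$ for the relevant index pairs, and $\alpha(v_{1/2})=0$ kills precisely the one extra term that $E_{1/2,3/2}\circ\beta$ produces on the $V$-side and that has no counterpart in $F_{0,1}=E_{-1,0}$. You should redo your step (2) with the corrected shift before asserting the termwise agreement.
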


Next we define $\bar{\beta}:{}_\mathbb Q\mathcal K^+_{ev}(P)\to{}_\mathbb Q\mathcal K_{odd}(P)$
by setting:

$\bar\beta[P_\lambda]=\frac{1}{2}([P_{\lambda'_+}]\oplus [P_{\lambda'_-}])$ 
if $f_\lambda=(\times^k\ldots)$, $k>0$;

$\bar\beta[P_\lambda\oplus P^\sigma_\lambda]=[P_{\lambda'}]$ if 
$f_\lambda=(\circ\ldots)$;

$\bar\beta[P_\lambda]=[P_{\lambda'}]$ if $f_\lambda=({}_{\times^k}^>\ldots)$;

and  $\bar{\alpha}:{}_\mathbb Q\mathcal K_{odd}(P)\to{}_\mathbb Q\mathcal K^+_{ev}(P)$
as follows:

If $\nu=\lambda'_{\pm}$ for some dominant $SOSP(2m,2n)$ weight
$\lambda$ we set $\bar\alpha[P_\nu]=[P_\lambda]$ or $[P_\lambda\oplus  P^\sigma_\lambda]$ 
(in the case $P_\lambda\neq  P^\sigma_\lambda$). 
Otherwise we set $\bar\alpha[P_\nu]=0$.

It follows immediately from the definitions that $\bar\alpha$ is
surjective, $\bar\beta$ is injective and $\bar\alpha\circ\bar\beta=\operatorname{id}$.

\begin{leme}\label{rel2} Let $T=T_{a,a\pm 1}$, $P=P_\lambda$ or
  $P=P_\lambda\oplus P^\sigma_\lambda$. If $T$ does not decrease the
  degree of atypicality of $\lambda$, then
$$T([P])=\bar\alpha\circ T'\circ\bar\beta([P]).$$
\end{leme}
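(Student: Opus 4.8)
The plan is to derive Lemma~\ref{rel2} from a single intertwining relation between $\bar\beta$ and the translation functors, and then use $\bar\alpha\circ\bar\beta=\operatorname{id}$; the argument will run parallel to the proof of Lemma~\ref{rel1}, the new ingredient being the known action of translation functors on PIMs. Since $\bar\beta,\bar\alpha,T,T'$ are all $\mathbb Q$-linear, it suffices to prove $T([P])=\bar\alpha\circ T'\circ\bar\beta([P])$ when $[P]$ runs over a basis of $_\mathbb Q\mathcal K^+_{ev}(P)$, and such a basis is given precisely by the three families on which $\bar\beta$ was defined: $[P_\lambda]$ with $f_\lambda$ carrying a $>$ at the tail, $[P_\lambda]$ with $f_\lambda$ carrying $\times$'s but no $>$ at the tail, and $[P_\lambda\oplus P^\sigma_\lambda]$ with $f_\lambda$ of empty tail. (One could alternatively induct on the degree of atypicality via Lemma~\ref{uniqueev}, the typical case reducing to Lemma~\ref{rel1} since there $[P_\lambda]=\mathcal E(\lambda)$ and $\bar\beta$ restricts to $\beta$; but this is not needed.)

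The key step is the claim that whenever $T=T_{a,a\pm1}$ does not decrease the degree of atypicality of $\lambda$, one has $\bar\beta(T([P]))=T'(\bar\beta([P]))$ for $[P]=[P_\lambda]$, respectively $[P_\lambda\oplus P^\sigma_\lambda]$, as above. Granting it, applying $\bar\alpha$ and using $\bar\alpha\circ\bar\beta=\operatorname{id}$ gives $\bar\alpha\circ T'\circ\bar\beta([P])=\bar\alpha\circ\bar\beta\circ T([P])=T([P])$, which is the assertion. I would prove the claim by a finite case analysis organized by the tail of $f_\lambda$ and by $T$. On the left, $T([P_\lambda])$ is a PIM, a direct sum of two PIMs, or zero, and the weight diagram(s) of the summand(s) are read off from Lemma~\ref{lembeta} (for $T=T_{a,a\pm1}$ with $a\geq1$) and from Corollary~\ref{pimev} together with the tail moves a')--e') (for $T=T_{0,1},T_{1,0}$); expressing the result via the three defining formulas for $\bar\beta$, which depend only on the tail symbol(s), presents the left side as an explicit $\mathbb Q$-combination of odd PIMs $[P_{\nu'}]$. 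On the right, $\bar\beta([P_\lambda])$ is one of the three standard forms, and applying $T'$ --- the shift of $T$ by $\frac{1}{2}$ on positions, with the extra factor $2$ when $T=T_{1,0}$, as in the definitions preceding Lemma~\ref{rel1} --- together with the $SOSP(2m+1,2n)$ description of the translation action on PIMs (Lemma~\ref{lembeta}, the elementary changes a)--ct), and Lemma~\ref{lemdelta}) presents the right side. Every diagram that appears is the $t\mapsto t-\frac{1}{2}$ shift of one occurring on the left, so the two sides involve the same odd PIMs and one only has to match coefficients.

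The main obstacle is keeping this coefficient and sign bookkeeping consistent near the tail: the factor $2$ relating $T_{1,0}$ to $2T_{3/2,1/2}$ has to cancel the factor $\frac{1}{2}$ in $\bar\beta([P_\lambda])=\frac{1}{2}([P_{\lambda'_+}]+[P_{\lambda'_-}])$ for diagrams with $\times$'s and no $>$ at the tail, and to absorb the fact (Corollary~\ref{pimev}, moves a') and e')) that $T_{1,0}$ and $T_{0,1}$ may split a PIM into two. One must verify that the two images produced by $\bar\beta$ reassemble into $T_{3/2,1/2}([P_{\lambda'_+}]+[P_{\lambda'_-}])$ and that the $(\pm)$ labels assigned by $\bar\beta$ are exactly the ones on which $T_{3/2,1/2}$ acts as dictated by the elementary changes bt), ct) and by the behaviour of the switch functor recorded in Lemma~\ref{switch}. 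Once the $\frac{1}{2}$'s, the signs, and the occasional second summand are tracked through, each configuration is a one-line check, entirely parallel to the direct computation behind Lemma~\ref{rel1}.
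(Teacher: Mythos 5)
Your reduction hinges on the intermediate claim that $\bar\beta\circ T=T'\circ\bar\beta$ on the relevant classes $[P]$, and that claim is false: only the weaker identity $T=\bar\alpha\circ T'\circ\bar\beta$ (the lemma itself) holds, and the outer $\bar\alpha$ cannot be obtained for free from $\bar\alpha\circ\bar\beta=\operatorname{id}$. Two tail configurations already break the intertwining. First, take $f_\lambda=\times^k<$ and $T=T_{0,1}$: then $T_{0,1}(P_\lambda)=0$ (by Corollary \ref{pimev}, $T_{0,1}(P_\lambda)\neq 0$ forces a $>$ at the tail), so $\bar\beta(T[P_\lambda])=0$, whereas $T'\bar\beta([P_\lambda])=T_{1/2,3/2}\bigl(\tfrac{1}{2}([P_{(+)\times^k<}]+[P_{(-)\times^k<}])\bigr)=\tfrac{1}{2}[P_{{}^<_{\times^k}\circ}]\neq 0$; this term is only annihilated afterwards by $\bar\alpha$, because a diagram with $<$ at the odd tail is not of the form $\mu'_\pm$ for any dominant $SOSP(2m,2n)$ weight $\mu$. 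Second, take $f_\lambda={}^>_{\times^{k-1}}<$ and $T=T_{0,1}$: then $\bar\beta(T[P_\lambda])=\tfrac{1}{2}([P_{(+)\times^k\circ}]+[P_{(-)\times^k\circ}])$ by move c') of Corollary \ref{pimev} and the definition of $\bar\beta$, while $T'\bar\beta([P_\lambda])=[P_{(+)\times^k\circ}]$ only (the elementary change at) of Lemma \ref{lemdelta} produces the $(+)$ sign alone); the two sides agree only after $\bar\alpha$ identifies both with $[P_{\times^k\circ}]$. So the step ``granting the claim, apply $\bar\alpha$ and use $\bar\alpha\circ\bar\beta=\operatorname{id}$'' cannot be carried out. (This is also why, later in the paper, Lemma \ref{rel4} is careful to write $\bar\beta(T([P]))=\bar\beta\circ\bar\alpha(T'(\bar\beta([P])))$ with the projector $\bar\beta\circ\bar\alpha$ inserted, rather than the naive intertwining.)

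The repair is to drop the intermediate claim and verify the stated identity $T([P])=\bar\alpha\circ T'\circ\bar\beta([P])$ directly, configuration by configuration near the tail; this is exactly what the paper does, and its four displayed diagrams are the two problematic $T_{0,1}$ cases above together with the two $T_{1,0}$ cases where the factor $2$ in $T'=2T_{3/2,1/2}$ plays against the $\tfrac{1}{2}$ in $\bar\beta$. Your treatment of the remaining material is sound: for $T_{a,a\pm1}$ with $a\geq 1$ the elementary changes are literally the same in the even and odd pictures, and your bookkeeping concerns (the $2$ versus $\tfrac{1}{2}$, the splitting of a PIM into two under a') and e'), the role of the $(\pm)$ labels) are precisely the points the direct verification must address.
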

\begin{pf} If $T\neq T_{0,1}$ or $T_{1,0}$ the statement follows
  from the fact that the elementary changes listed in subsection
  \ref{ec} are the same in the even and odd cases.

 If $T\neq T_{0,1}$ or $T_{1,0}$ one can just compare the actions of
 $T$ and $T'$ in all possible cases. We show how it works in the most
 interesting cases and leave to the reader the remaining cases.

$$\begin{array}{lcl}
\times^k<&\xrightarrow{\bar\beta}&\frac{1}{2}((+)\times^k<\oplus(-)\times^k<)\\
T_{0,1}\downarrow &&\downarrow T_{1/2,3/2}\\
0&\xleftarrow{\bar\alpha}&\frac{1}{2}({}_{\times^k}^<)\circ
\end{array}$$

\vskip 0.5cm

$$\begin{array}{lcl}
{}_{\times^{k-1}}^><&\xrightarrow{\bar\beta}&{}_{\times^{k-1}}^><\\
 T_{0,1}\downarrow&&\downarrow T_{1/2,3/2}\\
\times^k\circ&\xleftarrow{\bar\alpha}&(+)\times^k\circ
\end{array}$$

\vskip 0.5cm

$$\begin{array}{lcl}
\times^k>&\xrightarrow{\bar\beta}&\frac{1}{2}((+)x^k>\oplus(-)x^k>)\\
T_{1,0}\downarrow &&\downarrow 2T_{3/2,3/2}\\
{}_{\times^k}^>\circ\oplus{}_{\times^{k-1}}^>\times&\xleftarrow{\bar\alpha}&{}_{\times^k}^>\circ\oplus{}_{\times^{k-1}}^>\times
\end{array}$$

\vskip 0.5cm

$$\begin{array}{lcl}

[+]\circ>\oplus[-]\circ>&\xrightarrow{\bar\beta}&\circ>\\
\downarrow T_{1,0}&&\downarrow 2T_{3/2,1/2}\\
2>\circ&\xleftarrow{\bar\alpha}&2>\circ
\end{array}$$

\end{pf}
 
\begin{leme}\label{rel3} One has 
$$\beta\circ\alpha|_{{}_\mathbb Q\mathcal K_{odd}(P)}=\bar\beta\circ\bar\alpha.$$
\end{leme}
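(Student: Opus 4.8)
The plan is to reduce the asserted identity to two inclusions of subspaces, exploiting that both operators in question are idempotents. Since $\alpha\circ\beta=\operatorname{id}$ and $\bar\alpha\circ\bar\beta=\operatorname{id}$, the operator $\beta\circ\alpha$ is the projection of ${}_\mathbb Q\mathcal K_{odd}(\mathcal E)$ with image $\operatorname{im}\beta$ and kernel $\ker\alpha$, and $\bar\beta\circ\bar\alpha$ is the projection of ${}_\mathbb Q\mathcal K_{odd}(P)$ with image $\operatorname{im}\bar\beta$ and kernel $\ker\bar\alpha$; recall also ${}_\mathbb Q\mathcal K_{odd}(P)\subseteq{}_\mathbb Q\mathcal K_{odd}(\mathcal E)$ by Theorem \ref{BGG}. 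I claim it suffices to prove
$$\operatorname{im}\bar\beta\subseteq\operatorname{im}\beta\quad\text{and}\quad\ker\bar\alpha\subseteq\ker\alpha.$$
Indeed, granting this, for $x\in{}_\mathbb Q\mathcal K_{odd}(P)$ write $x=\bar\beta\bar\alpha(x)+\bigl(x-\bar\beta\bar\alpha(x)\bigr)$: the first summand lies in $\operatorname{im}\bar\beta\subseteq\operatorname{im}\beta$ and is fixed by the idempotent $\beta\circ\alpha$, while the second lies in $\ker\bar\alpha\subseteq\ker\alpha$ and is annihilated by it, so $\beta\alpha(x)=\bar\beta\bar\alpha(x)$.

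To establish the two inclusions I would first make all four subspaces explicit. From the definitions of $\bar\alpha$ and $\bar\beta$, in the basis $\{[P_\nu]\}$ of ${}_\mathbb Q\mathcal K_{odd}(P)$ the kernel $\ker\bar\alpha$ is spanned by the $[P_\nu]$ whose diagram $f_\nu$ carries a $<$ in the tail position, together with the differences $[P_{\lambda'_+}]-[P_{\lambda'_-}]$, while $\operatorname{im}\bar\beta$ is spanned by the $[P_{\lambda'}]$ and the sums $[P_{\lambda'_+}]+[P_{\lambda'_-}]$ appearing there. On the Euler-characteristic side the commutative square relating $\alpha,\beta$ to the linear maps on $U$ and on $V\oplus V^*$ shows that $\alpha$ annihilates $\mathcal E(\mu)$ exactly when $f_\mu$ has a $<$ or a $\times$ in the tail, and that $\beta$ carries $\mathcal E(\lambda)$, for a tailless $SOSP(2m,2n)$-weight $\lambda$, to $\mathcal E(\lambda')$; hence $\operatorname{im}\beta$ is the span of those $\mathcal E(\mu)$ with $f_\mu(1/2)\in\{\emptyset,\{>\}\}$ and $\ker\alpha$ the span of the remaining ones. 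With these descriptions both inclusions come down to substituting the now-available expansions $[P_\nu]=\sum_\mu a(\nu,\mu)\mathcal E(\mu)$ of Theorem \ref{lem1} and Theorem \ref{prop1} and inspecting the support of the coefficients: since moving $\times$-s along caps never disturbs a core symbol or a crossless tail, the $f_\mu$ contributing to $[P_{\lambda'}]$ all have empty or $>$ tail, so $[P_{\lambda'}]\in\operatorname{im}\beta$; the $f_\mu$ contributing to a difference $[P_{\lambda'_+}]-[P_{\lambda'_-}]$ are, by the sign clause of Theorem \ref{prop1}, exactly those with a $\times$ in the tail, hence killed by $\alpha$; and the $<$-in-the-tail case of $\ker\bar\alpha$ is handled identically. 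Alternatively one may run the verification by induction along the generating set of Lemma \ref{lemgamma}, the base case ($[P_\kappa]=\mathcal E(\kappa)$, $\kappa$ typical) being immediate from the definitions and the inductive step using Lemma \ref{rel1} and Lemma \ref{rel2} to transport the identity along atypicality-non-decreasing translation functors.

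Either way, the main obstacle sits at the tail position. The translation functors that act there — $T_{1/2,3/2}$, $T_{3/2,1/2}$, and on the $SOSP(2m,2n)$ side $T_{0,1}$, $T_{1,0}$ — fail to commute with $\alpha$ and $\beta$, they may carry an indecomposable projective to a direct sum of two (Corollary \ref{pimev}), and the factor $2$ built into the operator $T'$ attached to $T_{1,0}$ must be reconciled with the emerald-$\times$ sign $(-1)^{c(\lambda,\nu)}$ of Theorem \ref{prop1}. The case one really has to treat by hand is that of a weight $\nu$ whose diagram carries several $\times$ in the tail: there $\bar\alpha[P_\nu]$ can be a genuine direct sum $[P_\lambda\oplus P^\sigma_\lambda]$, the $(\pm)$-indicators and the switch functor enter, and one must check that the signs produced by Theorem \ref{prop1} cancel so that $[P_\nu]$ splits cleanly into its $\ker\alpha$-part and its $\operatorname{im}\beta$-part exactly as the above bookkeeping predicts.
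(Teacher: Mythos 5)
Your argument is correct and is essentially the paper's own proof: both exploit that $\beta\circ\alpha$ and $\bar\beta\circ\bar\alpha$ are idempotents and reduce the identity to matching kernels and images, which are then identified by reading off from Theorems \ref{lem1} and \ref{prop1} that $[P_\nu]$ with $<$ at the tail (resp.\ the differences $[P_{\lambda'_+}]-[P_{\lambda'_-}]$) expand only in $\mathcal E(\mu)$ with $<$ (resp.\ $\times$) at the tail, while the remaining generators of $\operatorname{im}\bar\beta$ expand only in $\mathcal E(\mu)$ with empty or $>$ tail. Your closing worries about translation functors and the factor $2$ are unnecessary for this lemma — the sign bookkeeping is already packaged in Theorem \ref{prop1} — so the first route you describe is complete as it stands.
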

\begin{pf} Both $\beta\circ\alpha$ and $\bar\beta\circ\bar\alpha$
are projectors. Observe that $\operatorname{Ker}\beta\circ\alpha$
is generated by $\mathcal E(\lambda)$ for all $f_\lambda=(<\ldots)$
or  $(\times\ldots)$.  $\operatorname{Ker}\bar\beta\circ\bar\alpha$
is generated by $[P_\lambda]$ with $f_\lambda=({}^<_{\times^k}\ldots)$
and $[P_{\lambda_+}]-[P_{\lambda_-}]$ with $f_\lambda=((\pm)\times^k\ldots)$. 
By Proposition \ref{prop1} if  $f_\lambda=({}^<_{\times^k}\ldots)$,
then $[P_\lambda]=\sum a(\lambda,\mu)\mathcal E(\mu)$ with $f_\mu=(<\ldots)$
and  if  $f_\lambda=((\pm)\times^k\ldots)$, then $[P_{\lambda_+}]-[P_{\lambda_-}]=\sum a(\lambda,\mu)\mathcal E(\mu)$
with $f_\mu=(\times\ldots)$. That implies
$$\operatorname{Ker}\bar\beta\circ\bar\alpha=\operatorname{Ker}\beta\circ\alpha\cap{}_\mathbb Q\mathcal K_{odd}(P).$$ 
Similarly
$$\operatorname{Im}\bar\beta\circ\bar\alpha=\operatorname{Im}\beta\circ\alpha\cap{}_\mathbb Q\mathcal K_{odd}(P).$$ 
The statement follows.
\end{pf}

\begin{leme}\label{rel4} One has 
$$\beta|_{{}_\mathbb Q\mathcal K_{odd}(P)}=\bar\beta,\,\,\alpha|_{{}_\mathbb Q\mathcal K_{odd}(P)}=\bar\alpha.$$
\end{leme}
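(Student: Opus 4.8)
I would first observe that, modulo Lemma \ref{rel3}, the two identities are equivalent, so it suffices to prove only $\beta|_{{}_{\mathbb Q}\mathcal K^+_{ev}(P)}=\bar\beta$. Indeed, granting this, for any $[P]\in{}_{\mathbb Q}\mathcal K_{odd}(P)$ Lemma \ref{rel3} gives $\beta(\alpha([P]))=\bar\beta(\bar\alpha([P]))$; since $\bar\alpha([P])$ lies in ${}_{\mathbb Q}\mathcal K^+_{ev}(P)$, the identity just granted rewrites the right-hand side as $\beta(\bar\alpha([P]))$, and injectivity of $\beta$ then forces $\alpha([P])=\bar\alpha([P])$. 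To prove $\beta|_{{}_{\mathbb Q}\mathcal K^+_{ev}(P)}=\bar\beta$ I would use that both sides are $\mathbb Q$-linear and that, by Lemma \ref{uniqueev} together with the commutation of translation functors with $\sigma$, the space ${}_{\mathbb Q}\mathcal K^+_{ev}(P)$ is spanned by the classes of typical PIMs and by the classes $[T_r\circ\cdots\circ T_1(P_\lambda)]$ with $\lambda$ typical and each $T_i$ not decreasing the degree of atypicality. Thus the equality need only be checked on these, which I would do by induction on $r$.

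\textbf{Base case $r=0$.} For typical $\lambda$ one has $P_\lambda=\mathcal E(\lambda)=L_\lambda$, and $f_\lambda$ carries no $\times$, so its tail position is either empty or carries a $>$ — precisely the two cases occurring in the definition of $\bar\beta$. In each case I would compute $\beta(\mathcal E(\lambda))$ (resp. $\beta(\mathcal E(\lambda)+\mathcal E(\lambda)^\sigma)$) directly: express it through $x_\lambda$ via $\psi^+$, apply the formulas $\beta(u_i)=w_{1/2-i}$ for $i\le 0$ and $\beta(u_i)=v_{i-1/2}$ for $i>0$, and compare with $\varphi^{-1}(x_{\lambda'})=\mathcal E(\lambda')$; that is, check that $\beta(x_\lambda)=x_{\lambda'}$ (and the analogous statement for $\alpha$). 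This is a finite bookkeeping check whose only subtle point is keeping track of the signs built into $\psi^\pm$ and $\varphi$ — coming from the pairing $\langle w_i,v_j\rangle=\delta_{ij}(-1)^{i+1/2}$ and from reordering wedge factors — so that the two vectors agree on the nose.

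\textbf{Inductive step.} Suppose $[P]=T([Q])$ with $[Q]=[T_{r-1}\circ\cdots\circ T_1(P_\lambda)]$, $T=T_{a,a\pm1}$ not decreasing the degree of atypicality of $[Q]$, and $\beta([Q])=\bar\beta([Q])$ known by induction. Decomposing $[Q]$ into its PIM summands and applying Lemma \ref{rel2} to each — the atypicality hypothesis passes to each summand, and the possible splitting of a projective under a translation functor is harmless since Lemma \ref{rel2} is additive in the projective — I obtain $T([Q])=\bar\alpha\circ T'\circ\bar\beta([Q])$. As translation functors send projectives to projectives, $T'\bar\beta([Q])$ lies in ${}_{\mathbb Q}\mathcal K_{odd}(P)$, so Lemma \ref{rel3} applies to it, and combining Lemmas \ref{rel2}, \ref{rel3}, the inductive hypothesis, and Lemma \ref{rel1} gives
\[
\bar\beta([P])=\bar\beta\,T([Q])=\bar\beta\bar\alpha\,T'\bar\beta([Q])=\beta\alpha\,T'\bar\beta([Q])=\beta\alpha\,T'\beta([Q])=\beta\,T([Q])=\beta([P]),
\]
which closes the induction and, with the reduction in the Plan, finishes the proof of both identities.

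\textbf{The main obstacle.} After the base case everything is a purely formal manipulation of Lemmas \ref{rel1}--\ref{rel3} and of the surjectivity/injectivity of $\alpha,\beta,\bar\alpha,\bar\beta$; in particular the PIM-splitting phenomenon of Corollary \ref{pimev} causes no difficulty. The real work — and the place where mistakes are easy to make — is the base case: one has to have normalized $\psi^\pm$, $\varphi$, $\alpha$ and $\beta$ so that all the signs coming from the pairing between $V$ and $V^*$ and from reordering exterior products cancel, so that the $P$-level maps are literally restrictions of the $\mathcal E$-level maps rather than merely equal up to scalars on each $x_\lambda$.
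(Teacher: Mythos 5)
Your proposal is correct and follows essentially the same route as the paper: the base case is the definitional check on typical PIMs (where $[P_\lambda]=\mathcal E(\lambda)$), the inductive step combines Lemmas \ref{uniqueev}, \ref{rel1}, \ref{rel2} and \ref{rel3} in exactly the paper's chain of equalities, and the $\alpha$-statement is deduced from the $\beta$-statement together with Lemma \ref{rel3} and the injectivity/surjectivity properties of $\beta$ and $\alpha$. Your explicit remarks on why the atypicality hypothesis passes to each PIM summand and on the sign bookkeeping in the base case are points the paper leaves implicit, but they do not change the argument.
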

\begin{pf}
 By definition $\beta([P])=\bar\beta([P])$ if $P$ is
typical or a direct sum of two typical PIMs.
By Lemma \ref{uniqueev} it is sufficient to check that
$\beta([P])=\bar\beta([P])$ implies $\beta(T[P])=\bar\beta(T[P])$ if
$T$ does not decrease the degree of atypicality of $P$. Indeed we have
$$\bar\beta(T([P]))=\bar\beta\circ\bar\alpha(T'(\bar\beta([P])))=\bar\beta\circ\bar\alpha(T'(\beta([P])))$$
and similarly
$$\beta(T([P]))=\beta\circ\alpha(T'(\beta([P]))).$$
Thus the statement about $\beta$ follows from the previous lemma.

The statement about $\alpha$ follows immediately, since 
$$\bar\beta\circ\bar\alpha=\beta\circ\alpha|_{{}_\mathbb Q\mathcal K_{odd}(P)}$$
implies 
$$\alpha|_{{}_\mathbb Q\mathcal K_{odd}(P)}\circ\bar\beta\circ\bar\alpha=\alpha|_{{}_\mathbb Q\mathcal K_{odd}(P)}\circ\beta\circ\alpha|_{{}_\mathbb Q\mathcal K_{odd}(P)},$$
hence 
$\alpha|_{{}_\mathbb Q\mathcal K_{odd}(P)}=\bar\alpha$.

\end{pf}

Now we are ready to prove Theorem \ref{prop1ev}.
The case of a tailless $\lambda$ was covered in Theorem \ref{lem1}. Write
$$[P_\lambda]=\alpha(\beta([P_\lambda]).$$

Use Theorem \ref{prop1}. If $f_\lambda$ has $>$ at the tail we have 
$$\beta([P_\lambda])=[P_{\lambda'}]=\sum a(\lambda',\nu')\mathcal E(\nu'),$$
and since $\alpha(\mathcal E(\nu'))=\mathcal E(\nu)$ we obtain
$$[P_\lambda]=\sum a(\lambda',\nu')\mathcal E(\nu).$$ 

If $f_\lambda$ does not have $>$ at the tail position, then
$$\beta([P_\lambda])=\frac{1}{2}[P_{\lambda'_+}\oplus P_{\lambda'_-}]=\sum a(\lambda',\nu')\mathcal E(\nu').$$
Here all $f_{\nu'}$ have $\circ$ at the tail position. Hence again
$$[P_\lambda]=\sum a(\lambda',\nu')\mathcal E(\nu).$$
\end{pf}

\begin{exs} $G=SOSP(4,2)$.

$\left[P(>><)\right]=\mathcal E(>><),$

$\left[P(>\times)\right]=\mathcal E(>\times\circ)+\mathcal E(>\circ\times),$

$\left[P(><>)\right]=\mathcal E(><>),$

$\left[P(\times\circ >)\right]= \mathcal E((+)\circ\times >)+\mathcal E((-)\circ\times >),$

$\left[P(\times > \circ )\right]= \mathcal E((+)\circ >\times )+\mathcal E((-)\circ > \times ),$

$\left[P({}_\times^>\circ\circ)+P(>\times\circ)\right]= 2\mathcal E(>\circ\times),$

$\left[P({}_\times^>\circ\circ)\right]=-\mathcal E(>\times\circ)+\mathcal  E(>\circ\times).$

$G=SOSP(4,4)$

$\left[P \left(\begin{array}{c} \times \\ \times      \\ \end{array}\right)\right]= - \mathcal E ((+)\circ \; \times
  \; \times) - \mathcal E ((-)\circ \; \times \; \times) + \mathcal E ((+)\circ \; \times \; \circ \; \times) + \mathcal E ((-)\circ \; \times \; \circ \; \times).$

\end{exs}

\end{document}